\newtheorem{theorem}{Theorem}
\newtheorem{thm}{Theorem}[section]
\newtheorem{lemma}[thm]{Lemma}
\newtheorem{proposition}[thm]{Proposition}
\newtheorem{rmk}[thm]{Remark}
\newtheorem{defn}[thm]{Definition}
\newcommand{\N}{\mathbb N}
\newcommand{\enne}{\mathbb N}
\newcommand{\R}{\mathbb R}
\newcommand{\OO}{\mathcal{O}}
\newcommand{\Id}{{\operatorname{Id}}}
\newcommand{\norm}[1]{{\left\|#1\right\|}}
\newcommand{\scal}[2]{{\left\langle #1,#2\right\rangle}}
\newcommand{\cL}{{\mathscr L}}
\renewcommand{\d}{{\mathrm d}}
\renewcommand{\P}{{\mathbb{P}}}
\newcommand{\Tr}{\operatorname{Tr}}
\newcommand{\E}{\mathbb E}
\begin{document}

\title[Pathwise uniqueness by noise for singular stochastic PDEs]
{Pathwise uniqueness by noise for singular stochastic PDEs}

\author{Davide Addona}
\address[Davide Addona]{Dipartimento di Scienze Matematiche, Fisiche e Informatiche, Universit\`a degli Studi di Parma, Parco Area delle Scienze 53/a (Campus), 43124 Parma, Italy}
\email{davide.addona@unipr.it}
\urladdr{}

\author{Davide A. Bignamini}
\address[Davide Bignamini]{Dipartimento di Scienza e Alta Tecnologia (DISAT), Universit\`a degli Studi dell'Insubria, Via Valleggio 11, 22100 Como, Italy}
\email{da.bignamini@uninsubria.it}
\urladdr{}

\author{Carlo Orrieri}
\address[Carlo Orrieri]{Department of Mathematics, Universit\`a di Pavia, Via Ferrata 1, 27100 Pavia, Italy}
\email{carlo.orrieri@unipv.it}
\urladdr{https://mate.unipv.it/orrieri}

\author{Luca Scarpa}
\address[Luca Scarpa]{Department of Mathematics, Politecnico di Milano, 
Via E.~Bonardi 9, 20133 Milano, Italy.}
\email{luca.scarpa@polimi.it}
\urladdr{https://sites.google.com/view/lucascarpa}

\subjclass[2010]{60H15, 35R60, 35R15}

\keywords{SPDEs, Pathwise uniqueness by noise, Kolmogorov equations.}   

\begin{abstract}
Pathwise uniqueness for stochastic PDEs
with drift in differential form is a main 
open problem in the recent literature on regularisation by noise.
This paper establishes a self-contained theory
in the framework 
of stochastic evolution equations on separable Hilbert spaces
and provides a first 
result to address such an issue.
The singularity of the drift allows to 
achieve novel uniqueness results for 
several classes of examples, ranging from 
fluid-dynamics to phase-separation models.
\end{abstract}

\maketitle

\tableofcontents

\section{Introduction}
We are interested in pathwise 
regularisation-by-noise phenomena
for classes of SPDEs in the form 
\begin{align}
\label{SDE}
\d X + AX \,\d t=B(X)\,\d t+A^{-\delta}\,\d W\,, \qquad X(0)=x\,,   
\end{align}
where $A$ is a positive self-adjoint linear operator on 
a separable Hilbert space $H$ with domain $D(A)$,
$W$ is a $H$-cylindrical Wiener process and the parameter 
$\delta\in\left[0,\frac12\right]$ tunes the color of the noise.
The initial datum $x$ will be assumed to be either in $H$
or in $D(A^\alpha)$.
Motivated by numerous PDE examples, 
the nonlinearity $B$ is 
defined on
a Sobolev-type subspace of $H$
and is allowed to take values in larger dual spaces, namely 
\[
B:D(A^\alpha)\to D(A^{-\beta})
\]
for some $\alpha\in[0,1)$ and $\beta\in\left[0,\frac12\right]$.
Moreover, $B$ is only required to be
locally $\theta$-H\"older-continuous,
for some $\theta\in(0,1)$, so  that
the corresponding deterministic equation 
in general lacks uniqueness.

Pathwise uniqueness by noise for SPDEs with 
singular perturbations in such a form has been an open problem in 
the recent literature. This paper successfully establishes a self-contained framework to address it. 
Our strategy is twofold.
First, we construct the novel abstract theory 
for uniqueness by noise in such setting, by providing 
the precise range of parameters $\alpha,\beta,\delta$ for which 
weak/pathwise uniqueness or continuous dependence on the initial data 
hold. Secondly, we present some notable applications 
to specific examples of interest coming from 
fluid-dynamics and phase-separation, 
such as singular perturbations
of fractional heat PDEs, (possibly) hyper-viscous Burgers
and Navier-Stokes PDEs, and 
Cahn-Hilliard-type PDEs.

\subsection{Recent literature}
The general theory on regularisation by noise has reached 
a long history, starting from the contribution by Zvonkin \cite{Zv74} and Veretennikov \cite{V1980} for the finite-dimensional case,
and with the work \cite{GP1993}
for the infinite-dimensional setting.
Let us now briefly comment on the state of the art 
in the infinite-dimensional case.
Weak uniqueness by noise has been studied in 
\cite{ABGP2006, D2003, Z2000} in the context of equations with nondegenerate multiplicative noise and H\"older-continuous bounded drift. Still concerning weak uniqueness,
\cite{Pri2015, Pri2021, Pri2021c} tackled the
relevant case with drift of the form 
$B=A^{\frac12}F$, where $F$ is locally H\"older-continuous on $H$,
while \cite{BOS} considered the sub-critical case
with no H\"older assumption on the drift.
Finally, we also mention \cite{AB-weak}, where weak uniqueness for a class of stochastic differential equations evolving in a separable Hilbert space, which includes stochastic damped equations, is studied.
As far as pathwise uniqueness is concerned, 
we point out the results \cite{AddBig, AddBig2, Cer-Dap-Fla2013, 
Dap-Fla2010, Dap-Fla2014}, dealing with the particular setting $\alpha=\beta=0$. 
Moreover, the case of measurable drift from $H$ to $H$
has been studied in
\cite{Dap1, DPFPR2}, where pathwise uniquness is obtained only 
for almost every initial datum with respect to a specific Gaussian measure.
For completeness, we also refer to the works 
\cite{AddMasPri23, FGP2010, KR2005, MP2017, Mas-Pri2024}
for further techniques for strong uniqueness.

\subsection{Discussion on the main results}
The paper provides three main results, 
on weak uniqueness, pathwise uniqueness and 
continuous dependence on the initial data, 
namely Theorems~\ref{th:1}, \ref{th:2}, and \ref{th:3} below.
The main requirement on the parameters $\alpha, \beta, \delta$
is that 
\[
  \alpha\in[0,1)\,, \qquad
  \delta\in \left(-\frac12+\alpha, \frac12\right]\,,\qquad
  \beta\in\left[0,\frac12-\delta\right]\,.
\]
The range of the parameter $\alpha$ allows to 
consider drifts $B$ defined up to the effective domain 
$D(A)$ excluded. The range for $\delta$ 
may include both positive and negative values, corresponding 
to the case of coloured noise and rougher-than-cylindrical noise,
respectively. Eventually, the choice of $\beta$ has to be 
calibrated in terms of $\delta$: in particular, 
we stress the bound from above
\[
  \beta+\delta\leq \frac12\,,
\]
with $\beta+\delta=\frac12$ representing a critical case
for the theory.
A rough interpretation of the balance between the parameters
is that one the one hand, smaller values of $\delta$
provide better regularisation by noise, 
while on the other hand higher values of $\delta$
allow to consider wider range of equations (e.g.~with 
higher $\alpha$ or higher dimension). This will be 
discussed in detail when dealing with the examples.
In this framework, Theorem~\ref{th:1} provides 
a weak uniqueness result without any restriction on the 
range of the H\"older coefficient $\theta$,
and for initial data either in $D(A^\alpha)$ or in $H$
depending on whether $B$ is unbounded or bounded, respectively.
The pathwise uniquness result is given in
Theorem~\ref{th:2}: here, one needs some further joint conditions
on all the coefficients, in particular either that 
\[
  (1-\theta)(\delta-\alpha)<\frac\theta2
\]
if the initial datum is taken in $D(A^\alpha)$, or even that 
\[
\theta\alpha + (1-\theta)\delta<\frac\theta2
\]
if the initial datum is taken in $H$ and $B$ is bounded. In the critical case $\beta+\delta=\frac{1}{2}$, an additional smallness condition on $B$ is required, see Remark \ref{rmk:pic} for details.
We note that the more $\theta$ is close to $1$
the less such inequalities becomes restrictive, whereas 
for $\theta$ close to $0$ they add a constraint on the range of 
$\delta$ and $\alpha$.
Eventually, Theorem~\ref{th:3} contains sufficient conditions 
in the bounded case
for continuous dependence with respect to the initial data, 
either in the topology of $D(A^\alpha)$ or the one of $H$,
depending again on the two restrictions above on the coefficients.

\subsection{Main novelties}
The main achievement of the paper is
the pathwise uniqueness result presented in Theorem~\ref{th:2}.
This is the first contribution in the literature on pathwise regularisation 
for SPDEs with singular perturbations in the form \eqref{SDE}.
Indeed, the only results available in the literature
are \cite{AddBig, AddBig2, AddMasPri23, Dap-Fla2010, Dap-Fla2014},
which deal only with the case $\alpha = \beta = 0$,
i.e., nonlinear perturbations of order zero.
Instead, Theorems~\ref{th:2} and \ref{th:3} below 
address both pathwise uniqueness and 
continuous dependence
for genuine singular perturbations in differential form, 
by including 
a wide range of the coefficients $\alpha$ and $\beta$.
Furthermore, even in the case $\alpha=\beta=0$, our results substantially improve the aforementioned works: 
see Remark~\ref{rmk:tech} below.
Alternatively,
Theorem~\ref{th:2} can be viewed as a pathwise uniqueness 
counterpart of the most recent
weak uniqueness results \cite{BOS, Pri2021, Pri2021c}:
see Section~\ref{Exam} for a detailed discussion of the applications.


As a by-product of the results of the present work,
we also slightly extend the current state of the art 
of regularisation by noise 
in the direction of weak uniqueness, see \cite{AB-weak,BOS,Cho-Gol1995,Kun2013,Pri2021,Pri2021c}.
First, in \cite{Cho-Gol1995, Kun2013} the authors prove weak uniqueness in an abstract framework, but only 
in the case $\alpha = \beta = 0$.
Secondly, the seminal works \cite{Pri2021,Pri2021c}
addresses the interesting critical case $\alpha=0$ and $\beta=\frac12$, 
under a local H\"older condition on the nonlinearity
and using a space-time white noise ($\delta=0$).
Eventually, in the range $\alpha\geq 0$ and $\delta\geq0$, \cite{BOS} deals with the sub-critical case $\beta+\delta<\frac12$ but without H\"older assumptions on the nonlinearity. Under a local H\"older condition on the nonlinearity, Theorem \ref{th:1} unifies and extends the results in \cite{BOS,Pri2021,Pri2021c} allowing the study of the critical case even when $\alpha>0$
and when the noise is white or coloured.
The possibility of using a coloured noise 
allows to obtain uniqueness in higher dimensions when, e.g., 
$A$ is the realisation 
of the negative Laplace operator on a bounded domain.
In order to prove Theorem \ref{th:1}, we simplify the techniques proposed in \cite{BOS}, by exploiting the approximation method introduced in \cite{AddBig} and, for the critical case, we follow the idea in \cite{Pri2021,Pri2021c}.
Let us point out that in this context, proving weak uniqueness in the critical cases without H\"older assumptions on the nonlinearity remains an open problem.

\subsection{Applications}
From the application point of view, 
the main focus of Theorems~\ref{th:1},
\ref{th:2}, and \ref{th:3} concerns
stability-by-noise effects with respect to 
singular perturbations for a wide class of 
specific PDE examples. This includes 
physical models ranging from fluid-dynamics to phase-separation.
We briefly discuss here 
some main applications and examples, 
as detailed in Section~\ref{Exam}.

The paradigmatic example to start with is the stochastic heat equation with H\"older continuous perturbations.
This has been surely the most studied model in the recent literature 
of regularization by noise, see e.g.~\cite{AddBig, AddBig2, AB-weak,
AddMasPri23, Dap-Fla2010, Dap1, DPFPR2, MP2017}
where the nonlinear perturbation is of order zero, as pointed out before.
Here, we consider the stochastic heat equation even under 
more generality, by including both possible fractional diffusion 
effects and genuinely singular nonlinear perturbations in differential form, namely
  \begin{align*}
  \d X +(- \Delta)^\gamma X\,\d t = 
  (-\Delta)^\nu F((-\Delta)^\mu X)\,\d t 
  + (-\Delta)^{-\rho}\,\d W
  \end{align*}
  for some $\gamma>0$, $\mu,\nu\geq0$, $\rho>-\frac12$,
  and where $F:\R\to\R$ is locally $\theta$-H\"older continuous.
Our results provide weak and pathwise uniqueness by noise
for such a class of models, even up to space dimension $3$, and 
for a wide range of the parameters. For details 
we refer to Subsections~\ref{ssec:heat}--\ref{ssec:heat_fr}. 

The second case of examples that we provide concern with singular H\"older-type perturbations of fluid-dynamical models,
such as the Burgers and the Navier-Stokes equations,
which are considered both in their classical and 
hyper-viscous form, namely
\begin{align*}
\d X +(-\Delta)^\gamma X \,\d t + 
(X\cdot \nabla)X\,\d t= 
(-\Delta)^\nu F((-\Delta)^\mu X)\,\d t+
(-\Delta)^{-\rho}\,\d W 
\end{align*}
and 
\begin{align*}
\begin{cases}
\d X +(-\Delta)^\gamma X \,\d t + 
(X\cdot \nabla)X\,\d t + \nabla p\,\d t
= (-\Delta)^\nu F((-\Delta)^\mu X)\,\d t+
(-\Delta)^{-\rho}\,\d W\,,\\ 
\nabla\cdot X = 0 \,,
\end{cases}
\end{align*}
for some $\gamma>0$, $\mu,\nu\geq0$, $\rho>-\frac12$, and where $F:\R\to\R$ is locally $\theta$-H\"older continuous.
The results of the paper
establish weak and pathwise uniqueness under 
specific tuning of the hyperviscosity parameter, the H\"older exponent and the regularity of the initial datum.
In this direction, weak uniqueness is obtained up to dimension $3$ for every choice of the H\"older exponent,
under a suitable balance between the hyperviscosity parameter and 
the regularity of the initial datum. Intuitively, the more regular the initial datum is, the less hyperviscosity 
is required for uniqueness. The precise range of the parameters is detailed in Table~\ref{tab1}.
As far as pathwise uniqueness is concerned, 
the situation is more delicate, since also the H\"older exponent comes into play. In this case, we present two main ways
of optimising the choice of the parameters:
either we take extremely rough perturbations (i.e., with H\"older exponent close to $0$) or we consider more gentle perturbations (i.e., with H\"older exponent close to $1$).
In both cases, the hyperviscosity is optimised depending 
on the space dimension and the intial datum:
this is detailed in Tables~\ref{tab2} and \ref{tab3}.
The specific technique employed in the paper results
in an analogous range of parameters both 
for the Burgers and the Navier-Stokes equations.

As far as the Burgers equation is concerned, when $\gamma\geq 1$ 
our results provide a pathwise uniqueness counterpart to 
\cite{Pri2021, Pri2021c}, also including higher dimensions.
Moreover, 
we also point out that our framework 
covers the perturbed Burgers equation in the range $\gamma\in\left(\frac58,1\right)$. The well-posedness of Burgers equation in $L^2([0,1])$ when $\gamma\in\left(\frac34,1\right)$ is studied in the contribution \cite{Brz-Deb2007}
in dimension $d=1$ with multiplicative noise. 

Let us emphasise here the well-studied problem of the three-dimensional stochastic Navier-Stokes equation. In this case, our results establish pathwise uniqueness 
in the following regimes:
\begin{itemize}
\item hyperviscosity $\gamma=\frac74^+$, \quad
H\"older coefficient $\theta=1^-$, \quad
    initial datum in $H^{\frac12}_{00}$;
    \item hyperviscosity $\gamma=\frac32^+$, \quad
    H\"older coefficient $\theta=1^-$, \quad
    initial datum in $H^{1}_{0}$;
    \item hyperviscosity $\gamma=\frac94^+$, \quad
    H\"older coefficient $\theta=0^+$, \quad
    initial datum in $H^{1}_{0}$.
\end{itemize}
Let us stress that the novelty of our approach 
is given by the presence of the singular H\"older perturbation 
term $F$ in differential form
in the equations.
The presence of such an irregular perturbation forces
the hyperviscosity $\gamma$ to be slightly larger 
with respect to the expected ones that appear in the literature
without perturbation (i.e.~ with $F\equiv0$).
In this direction, the literature on the classical Navier-Stokes 
equation has witnessed an enormous development, starting from 
the foundational papers by Lions \cite{Lions} on the deterministic case
and Flandoli-Gatarek \cite{FG1995} on the stochastic case.
More recently, important improvements on 
regularisation by noise
for the three-dimensional Navier-Stokes
have been obtained in \cite{FL2021} in
the context of vorticity blow-up of solutions
and in \cite{HZZ2023,MZZ} in the direction of non-uniqueness.
In \cite{Agresti} uniqueness with high probability and 
small initial data has been obtained in the case $\gamma>1$,
wheres in \cite{HLL2025} global well-posedness 
is achieved also in the case $\gamma=1$
for any initial data in the critical space $H^{\frac12}$.

Eventually, the last class of examples concerns with phase-separation models such as reaction-diffusion equations and Cahn-Hilliard-type equations. In this direction, we point out that in \cite{Cer-Dap-Fla2013} pathwise uniqueness for stochastic reaction-diffusion equations was studied in the case $\delta=0$ and when the nonlinearity is bounded and H\"older continuous on $C([0,1])$. In our setting, by tuning the parameters $\alpha$ and $\beta$ we are able to cover also the case of polynomial growth by exploiting classical Sobolev embeddings and working in $D(A^\alpha)$. For Cahn-Hilliard models, only weak uniqueness results were available in the literature \cite{BOS, Pri2021, Pri2021c}: here, we provide the pathwise uniqueness counterpart.

\subsection{Technical strategy}
Let us briefly comment on the 
main technical aspects of the arguments.
One of the classical approaches to regularization by noise consists in studying the associated Kolmogorov equation
to the SPDE and to exploit the so-called It\^o-Tanaka trick, i.e., to replace the nonlinear term $B$ by means of addends which depends on the (more regular) solution to the Kolmogorov equation itself. Here, we propose a variation of such procedure: instead of working directly on the infinite dimensional Kolmogorov equation (as was done e.g.~in \cite{BOS}), we introduce a family of finite-dimensional approximations on the line of \cite{AddBig} calibrated with a suitable scaling in the form of
\begin{equation}
\label{eq:kolmI}
\bar c\lambda_k u_{n,k}(x)
-\frac{1}{2}\Tr\left[A_n^{-2\delta}D^2 u_{n,k}(x)\right]
+\scal{A_nx}{D u_{n,k}(x)}
=\langle B_n(x), Du_{n,k}(x)\rangle +\scal{B(x)}{e_k},\, 
\end{equation}
where $(e_i)_{i\in\N_+}$ is a complete orthonormal system of $H$ consisting of eigenvectors of $A$ with corresponding eigenvalues $(\lambda_i)_{i\in\N_+}$, 
$n\in\N_+$, $k\in\{1,\ldots,n\}$, $x\in{\rm Span}\{e_1,\ldots,e_n\}$, $B_n(x):=\sum_{i=1}^n\scal{B(x)}{e_i}e_i$, $A_n x:=\sum_{i=1}^n\lambda_i\scal{x}{e_i}e_i$, and $\overline{c}$ is a suitable constant.

On the one hand, 
the proof of weak uniqueness requires uniform estimates only with 
respect to $n$, with $k$ being fixed (e.g.~$k=1$). On the other hand, the study of pathwise uniqueness is based on uniform estimates both with respect to $n$ and $k$ jointly, involving suitable norms of the derivatives of
$u_n=\sum_{k=1}^nu_{n,k}e_k$ up to second order. The It\^o-Tanaka argument is applied at the approximation level with $k$ and $n$ fixed, and the passage to the limit is performed at the very end by summing over $k$ and letting $n\to\infty$. In this case, the It\^o-Tanaka trick does not consist in replacing the nonlinear term $B$ but, exploiting the approximation $B_n$ in \eqref{eq:kolmI}, we add some correction terms which eventually compensate the bad behavior of the drift part. The advantage of such procedure is that one avoids to 
treat the infinite dimensional vector-valued Kolmogorov-type equation associated to \eqref{eq:kolmI}, for which a general theory is not directly available. Indeed, this would read formally as
\begin{equation}\label{eq:quasi-kolmo}
\overline c A[u(x)]+(\mathcal{L}u)(x)= Du(x)B(x) +B(x)\,,
\quad x\in H\,,
\end{equation}
where $\mathcal{L}$ is an Ornstein-Uhlenbeck-type operator.
We stress that our approach does not rely on the convergence of the functions $(u_{n,k})_{n,k}$, but only on uniform estimates
in $n$ and $k$, see Proposition \ref{prop:est}. The choice of the operator $A$ in front of $u(x)$ in the (formal) vector-valued Kolmogorov equation \eqref{eq:quasi-kolmo} is optimal in the sense that, if one replaces $Au(x)$
in \eqref{eq:quasi-kolmo} by 
$A^\gamma u(x)$ for some power $\gamma>0$, then the weakest assumptions on the coefficients of \eqref{SDE} ensuring weak and pathwise uniqueness are obtained for $\gamma=1$:
for details we refer to Section~\ref{sec:Kolmo-sol}.

\subsection{Structure of the paper}
We finally present the main contents of the sections.
In Section~\ref{sec:main} we set the 
assumptions and state the main results of the paper, namely 
Theorems~\ref{th:1}, \ref{th:2}, and \ref{th:3}.
In Section~\ref{sec:Kolmo-sol}, we study 
the elliptic Kolmogorov equation in the special case $\alpha=0$
and when $B$ is bounded. 
Section~\ref{sec:U0} deals with the proofs weak and pathwise uniqueness in the general case. Section~\ref{Exam} is devoted to discussing the
application of the theory to 
specific examples. Eventually, 
the Appendix \ref{sec:app} contains a technical result 
used in Section~\ref{sec:U0}
that allows to consider more general initial data.

\section{Assumptions and main results}
\label{sec:main}
\subsection{Notation}
We set $\R_+:=[0,+\infty)$ and $\N_+:=\N\setminus\{0\}$. If $K_1, K_2$ are two separable Hilbert spaces with inner product $(\cdot,\cdot)_{K_1}$ and $(\cdot,\cdot)_{K_2}$, respectively, and $\psi:K_1\to K_2$ is a twice Fr\'echet-differentiable function, we denote the first and second differentials of $\psi$
by $D\psi:K_1\to\cL(K_1;K_2)$ and
$D^2\psi: K_1\to \cL(K_1;\cL(K_1;K_2))
\cong \cL_{(2)}(K_1\times K_1;K_2)$, respectively, where $\cL(K_1;K_2)$ denotes the space of bounded linear operators and $\cL_{(2)}(K_1\times K_1;K_2)$ denotes the space of bilinear forms. If $K_1=K_2$, then we simply write $\cL(K_1)$. 

Typically, we will deal with the choice $K_2=\R$; as usual, in this case we use the classical identifications $K_1^*\cong K_1$ and $\cL_{(2)}(K_1\times K_1, \R)\cong\cL(K_1)$.
This means that for a twice-differentiable function $\psi:K_1\to\R$, we can identify the first and second differentials at $x\in K_1$ with an element of $K_1$ and $\cL(K_1)$, respectively, still denoted by $D\psi(x)$ and $D^2\psi(x)$. More precisely, for any increments $v,w\in K_1$ and $x\in K_1$ the above identification reads as
\[
D\psi(x)[v]=\left(D\psi(x), v\right)_{K_1}
\]
and
\[
  D^2\psi(x)[v,w]=\left(D^2\psi(x)v, w\right)_{K_1}
  =\left(D^2\psi(x)w, v\right)_{K_1}.
\]
Let $k\in\N$, we denote by $C^k_b(K_1;K_2)$ the space of 
continuously $k$-differentiable functions from $K_1$ into $K_2$ endowed with their natural norms. If $\vartheta\in(0,1)$, then we denote by $C_b^\vartheta(K_1;K_2)$ the space of bounded 
$\vartheta$-H\"older continuous functions from $K_1$ to $K_2$ with associated norm
\[
\|\psi\|_{C^\vartheta_b(K_1;K_2)}
:=\|\psi\|_{C^0_b(K_1;K_2)}+[\psi]_{C^\vartheta_b(K_1;K_2)}\,,
\]
where
\[    [\psi]_{C^\vartheta_b(K_1;K_2)}:=
\sup_{x,y\in K_1, \, x\neq y}
\frac{\|\psi(x)-\psi(y)\|_{K_2}}{\|x-y\|_{K_1}^\vartheta}\,,
\quad \psi\in C_b^\vartheta(K_1;K_2)\,.
\]
If $K_2=\R$, then we simply write 
$C^k_b(K_1)$ and $C_b^\vartheta(K_1)$, respectively. Further, we denote by $C_{\rm loc}^\vartheta(K_1;K_2)$ the space of locally bounded and $\vartheta$-H\"older continuous functions, i.e., the space of continuous functions $\psi$ such that for every $r>0$ the function $\psi$ is bounded on $B_{K_1}(r)$ (the ball in $K_1$ centered at $0$ with radius $r$) and
\begin{align*}
\sup_{x,y\in B_{K_1}(r), x\neq y}\frac{\|\psi(x)-\psi(y)\|_{K_2}}{\|x-y\|_{K_1}}<\infty.    
\end{align*}

\subsection{Assumptions}
We introduce the following assumptions.
\begin{enumerate}[start=0,label={{(H\arabic*})}]
\item \label{H0} $H$ is a real separable Hilbert space
with scalar product $(\cdot, \cdot)_H$. 
\item \label{H1} $A: D(A)\subseteq H\rightarrow H$ is a linear self-adjoint operator with dense domain. Moreover there exists an orthonormal basis $\{e_k:k\in\N_+\}$ of $H$ consisting of eigenvectors of $A$ and an increasing sequence $(\lambda_k)_{k\in\N_+}$ such that $\lambda_1>0$, $\lambda_k\to+\infty$ as $k\to\infty$ and
\[
Ae_k=\lambda_k e_k \quad\forall\,k\in\N_+\,.
\]
\end{enumerate}
\begin{enumerate}[start=2,label={{(H\arabic*})}]
\item \label{H2} 
$\alpha\in[0,1)$,
$\delta\in(-\frac12+\alpha, \frac12]$,
and $\beta\in[0,\frac12-\delta]$.
\item \label{H3} There exists $\eta\in (0,1)$ such that $A^{-(1+2\delta)+2\alpha+\eta}\in\cL^1(H,H)$.
\item \label{H4} 
$B\in C^\theta_{{\rm loc}}(D(A^\alpha);D(A^{-\beta}))$ for some $\theta\in (0,1)$.
\item \label{H5} 
In the regime $\beta+\delta=\frac12$,
there exists $z_0\in D(A^{-\beta})$ such that 
\[
\sup_{x\in D(A^{\alpha})}\norm{B(x)-z_0}_{D(A^{-\beta})}=:C_{\tilde B}<
\frac{\theta(1-\theta)(1-\beta-\alpha)}
{4M_{\alpha+\beta,\delta-\alpha,\theta}(2-\theta)}\,,
\]
where $M_{\alpha+\beta,\delta-\alpha,\theta}$ is the implicit constant 
appearing in Proposition~\ref{stimeHolder}.
\end{enumerate}

We note that condition \ref{H1} implies that $-A$ is the infinitesimal generator of a strongly continuous analytic symmetric semigroup of contractions $e^{-tA}$ on $H$. Moreover, by standard spectral theory arguments one can define the fractional spaces $D(A^\sigma)$ for every $\sigma \in (0,1]$, as well as the corresponding dual spaces $D(A^{-\sigma}):=D(A^\sigma)^*$, so that the condition \ref{H4} makes sense. The duality between any 
pairing $(D(A^\sigma), D(A^{-\sigma}))$ is denoted by $\langle\cdot,\cdot \rangle$, and the norm in $D(A^\sigma)$ is denoted by $\|\cdot\|_\sigma$.
Furthermore, we recall that for every $\gamma\in\R$ the semigroup $e^{-tA}$
extends to a strongly continuous analytic symmetric semigroup on $D(A^{\gamma})$: such an extension will be denoted by the same symbol, as usual.
Note that Proposition~\ref{stimeHolder}
only relies on \ref{H0}--\ref{H3}, 
so that assumption \ref{H5} is well-defined.
Furthermore, note that \ref{H5} implies that $B$ is bounded, but not necessarily 
globally H\"older-continuous.

\subsection{Main results}

In this paper we will consider the mild formulation of \eqref{SDE}.

\begin{defn}\label{def-sol}
Assume \ref{H0}--\ref{H4}.
A weak mild solution to \eqref{SDE} is a pair 
$(X,W)$ where $W$ is a $H$-cylindrical Wiener process defined on a filtered probability space 
$(\Omega,\mathcal{F},\{\mathcal{F}_t\}_{t\geq0},\mathbb{P})$ and 
$X$ is a $H$-valued progressively measurable process such that 
\begin{alignat*}{2}
X &\in C^0(\mathbb R_+; H)\cap C^0((0,+\infty); D(A^\alpha))
\quad \P\text{-a.s.} \qquad&&\text{if } x\in H\,,\\
X &\in C^0(\mathbb R_+; D(A^\alpha))
\quad \P\text{-a.s.} \qquad&&\text{if } x\in D(A^\alpha)\,,
\end{alignat*}
for every $t>0$ it holds $s\mapsto e^{-(t-s)A}B(X(s)) \in L^1(0,t;H)$, $\P\text{-a.s.}$, and
\begin{align}\label{mild_al}
X(t)=e^{-tA}x+\int_0^te^{-(t-s)A}B(X(s))\,\d s+W_A(t)
\quad\forall\,t\geq0\,, \quad \mathbb{P}\text{-a.s.},   
\end{align}
where $W_A$ is the stochastic convolution process given by 
\begin{equation*}
W_A(t):=\int^t_0e^{-(t-s)A}A^{-\delta}\,\d W(s)\,, \quad t\geq0\,.
\end{equation*}
\end{defn}

\begin{rmk}
We stress that thanks to assumption \ref{H2}, 
one has $\alpha+\beta\in[0,1)$ so that
Definition~\ref{def-sol} is well-posed. Let us show that 
all the three terms in \eqref{mild_al} are well-defined:
\begin{itemize}
\item The map $t\mapsto e^{-tA }x$ belongs to  $C^0(\mathbb R_+; H)\cap C^0((0,+\infty); D(A^\alpha))$ if $x\in H$, and also to
$C^0(\mathbb{R}_+; D(A^\alpha))$ if $x\in D(A^\alpha)$.
\item If $x\in H$, then condition 
$s\mapsto e^{-(t-s)A}B(X(s)) \in L^1(0,t; H)$
ensures the well-definition of the deterministic convolution term. For example, this is satisfied when
$B$ is bounded thanks to deterministic maximal regularity.
Moreover, if also $x\in D(A^\alpha)$, \ref{H4} implies that 
$B(X)\in C^0(\mathbb R_+; D(A^{-\beta}))$ $\mathbb P$-a.s. and it even holds that $s\mapsto e^{-(t-s)A}B(X(s)) \in L^1(0,t;D(A^\alpha))$, $\P\text{-a.s.}$ since $\alpha+\beta\in(0,1)$. 
Deterministic maximal regularity ensure that  
the deterministic convolution process in \eqref{mild_al} is continuous with values in $D(A^\alpha)$.
\item  Condition \ref{H3} implies that
\[
\int_0^{t}\frac{1}{s^{2\alpha+\eta}} \Tr\left[e^{-2sA}A^{-2\delta}\right]\,\d s<+\infty
\quad\forall\,t\geq0\,,
\]
for some $\eta>0$, which guarantees that 
$W_A\in C^0(\R_+;D(A^\alpha))$, see \cite[Chapter 4]{Dap-Zab14}.
\end{itemize}
\end{rmk}

\begin{rmk}\label{rmk:ex_eq}
We point out that if $x\in D(A^\alpha)$ and $B$ is bounded, by arguing as in \cite[Thm.~A1]{BOS}, then the existence of a weak mild solution for \eqref{SDE} holds under \ref{H0}--\ref{H4}.
If $x\in H$, existence of solutions can be shown 
in each specific example according to the explicit form of $B$.
We refer to \cite[Thm.~3.6 and Prop.~6.8]{Kun2013} for the equivalence between martingale, analytic weak, analytic weak mild and mild solutions.  
\end{rmk}

Here we are interested in studying the weak and pathwise uniqueness of the mild solutions to \eqref{SDE}.

\begin{defn}\label{uniqueness}
Assume \ref{H0}--\ref{H4} and let $H_0\subseteq H$. 
\begin{itemize}
\item[(Weak)] We say that weak uniqueness 
holds for \eqref{SDE} with initial data in $H_0$
if for every $x\in H_0$ and
for every pair $(X_1,W_1)$ and $(X_2,W_2)$ of weak mild solutions to \eqref{SDE} defined on the same probability space $(\Omega,\mathcal F,\{\mathcal F_t\}_{t\geq0},\mathbb P)$ with $X_1(0)=X_2(0)=x$, 
it holds that $X_1$ and $X_2$ have the same law on $C^0(\R_+; H)$, namely that for every measurable bounded $\psi:C^0(\R_+; H)\to \R$
\[
\E\left[\psi(X_1)\right] =\E\left[\psi(X_2)\right]\,.
\]
\item[(Pathwise)] We say that pathwise $($or strong$)$ uniqueness holds for \eqref{SDE} with initial data in $H_0$
if for every $x\in H_0$ and for every pair $(X_1,W)$ and $(X_2,W)$ of weak mild solutions to \eqref{SDE}
with the same $H$-cylindrical process $W$ defined on the same probability space $(\Omega,\mathcal F,\{\mathcal F_t\}_{t\geq0},\mathbb P)$ with $X_1(0)=X_2(0)=x$, it holds that
\[
\mathbb{P}\left(X_1(t)=X_2(t), \;\;\forall\,t\geq0 \right)=1\,.
\]
\end{itemize}
\end{defn}

We state now the three main results of the paper. 
These concern, respectively, weak uniqueness, pathwise uniqueness, and continuous dependence on the initial data.

\begin{theorem}[Weak uniqueness]
\label{th:1}
Assume \ref{H0}--\ref{H4}.
Then, weak uniqueness holds for \eqref{SDE}
with initial data in $D(A^\alpha)$,
in the sense of
Definition~\ref{uniqueness}.
Moreover, if $B$ is bounded, then
weak uniqueness holds for \eqref{SDE}
with initial data in $H$,
in the sense of
Definition~\ref{uniqueness}.
\end{theorem}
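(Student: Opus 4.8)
The plan is to deduce weak uniqueness from the uniqueness of an associated \emph{regularised} martingale problem, obtained by removing the singular drift through the It\^o--Tanaka identity built on the finite-dimensional Kolmogorov equations \eqref{eq:kolmI}. I would organise the argument around the case treated in Section~\ref{sec:Kolmo-sol} ($\alpha=0$ and $B$ bounded) and reach the general statement by two reductions. First, since any weak mild solution with datum in $D(A^\alpha)$ has paths in $C^0(\R_+;D(A^\alpha))$, I would recast \eqref{SDE} in the Hilbert space $D(A^\alpha)$, where $B$ becomes an order-zero perturbation relative to the new base space and the parameter $\alpha$ is effectively reset to $0$ (after checking that the transformed parameters still obey \ref{H2}--\ref{H3}). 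Second, the merely locally H\"older drift is handled by a truncation/stopping argument: stop at $\tau_R:=\inf\{t:\norm{X(t)}_\alpha>R\}$, replace $B$ by a bounded modification outside the ball of radius $R$, prove uniqueness in law up to $\tau_R$, and let $R\to\infty$ using the $\P$-a.s.\ continuity of the paths in $D(A^\alpha)$. When instead $x\in H$ and $B$ is bounded, the instantaneous smoothing of the stochastic convolution places $X(t)$ in $D(A^\alpha)$ for every $t>0$, so the previous case applies on $(0,\infty)$ and the full law on $C^0(\R_+;H)$ is recovered by continuity and the Markov/martingale-problem structure.

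The core step is the It\^o--Tanaka identity. For fixed $k,n$, the function $u_{n,k}$ solving \eqref{eq:kolmI} is the solution of a classical elliptic equation on $\mathrm{Span}\{e_1,\dots,e_n\}$ for the finite-dimensional Ornstein--Uhlenbeck operator perturbed by $B_n$, with source $\scal{B(x)}{e_k}$. Applying It\^o's formula to $u_{n,k}(X(t))$ along any weak mild solution $X$ — legitimate after passing to the analytic weak/martingale formulation via the equivalence of solution notions in \cite[Thm.~3.6 and Prop.~6.8]{Kun2013} — and using that $Du_{n,k}$ takes values in $\mathrm{Span}\{e_1,\dots,e_n\}$, so that $\scal{B(X)-B_n(X)}{Du_{n,k}(X)}=0$, I would collapse the drift part by \eqref{eq:kolmI} and obtain, for every $t\ge0$,
\begin{align*}
\int_0^t \scal{B(X(s))}{e_k}\,\d s
&=\bar c\lambda_k\int_0^t u_{n,k}(X(s))\,\d s
-\big(u_{n,k}(X(t))-u_{n,k}(x)\big)\\
&\quad+\int_0^t\scal{A^{-\delta}Du_{n,k}(X(s))}{\d W(s)}\,.
\end{align*}
This rewrites each component of the singular drift integral as a combination of bounded functionals of the path plus a stochastic integral whose integrand $Du_{n,k}$ is regular.

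The final step is the limit and the uniqueness conclusion. The decisive input is Proposition~\ref{stimeHolder} (together with Proposition~\ref{prop:est}), providing the estimates on $u_{n,k}$ and $Du_{n,k}$ that are uniform in $n$ \emph{for each fixed $k$} — and crucially only this weaker uniformity, not the joint $n,k$ control needed for pathwise uniqueness, is required here, which is why one may keep $k$ fixed (e.g.\ $k=1$) and argue coordinate by coordinate. Letting $n\to\infty$ for fixed $k$, I would identify the limit $u_k$ and pass to the limit in the identity above, thereby expressing the evolution of each coordinate $\scal{X(t)}{e_k}$ — via the mild formulation \eqref{mild_al} — through bounded, locally Lipschitz functionals; the first-order bound on $Du_{n,k}$ guarantees that the induced transformation $\Phi=\Id-\sum_k u_k e_k$ is a law-preserving homeomorphism. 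Collecting the coordinate identities shows that $X$ solves a martingale problem with non-singular coefficients, for which uniqueness in law is classical, and this transfers back to weak uniqueness for \eqref{SDE}.

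The main obstacle is the derivation of the uniform-in-$n$ bounds of Proposition~\ref{stimeHolder}, in particular the H\"older estimate on $Du_{n,k}$ up to the critical regime $\beta+\delta=\tfrac12$, where assumption \ref{H5} and the smallness of $B$ must be exploited to close the estimate; everything else rests on this a priori control. A secondary technical point is the rigorous justification of It\^o's formula for $u_{n,k}(X(\cdot))$ when $X$ is merely a mild solution, together with the careful bookkeeping of the reductions from $\alpha>0$ to $\alpha=0$ and from locally H\"older to bounded $B$.
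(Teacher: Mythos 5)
Your overall architecture (finite-dimensional Kolmogorov equations, an It\^o--Tanaka identity, reduction of $\alpha>0$ to $\alpha=0$ by working in $\mathcal H=D(A^\alpha)$, localisation to pass from locally H\"older to bounded drift) matches the paper's, but your core identity is wrong as stated, and the error sits exactly where the real work lies. The function $u_{n,k}$ lives on $H_n$, so ``$u_{n,k}(X(t))$'' can only mean $u_{n,k}(X_n(t))$ with $X_n:=P_nX$. It\^o's formula then produces the drift pairing $\scal{B_n(X(s))}{Du_{n,k}(X_n(s))}$, with the drift evaluated at the \emph{full} solution, whereas the Kolmogorov equation \eqref{eq:kolm} at the point $X_n(s)$ can only absorb $\scal{B_n(X_n(s))}{Du_{n,k}(X_n(s))}$ and produces the source $g_k(X_n(s))$, not $\scal{B(X(s))}{e_k}$. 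Your observation that $\scal{B(X)-B_n(X)}{Du_{n,k}}=0$ fixes the projection of the drift but not its evaluation point: the correction term
\[
\int_0^t \scal{B_n(X(s))-B_n(X_n(s))}{Du_{n,k}(X_n(s))}\,\d s
\]
does not vanish for fixed $n$ and must be kept. Controlling it --- via the convergence $\sup_{t\in[0,T]}\E\|X_n(t)-X(t)\|_H\to0$ of Proposition~\ref{prop:Xn}, the H\"older continuity of $B$, and the uniform gradient bounds --- is the heart of the paper's argument, and your proposal never invokes this convergence nor the approximating processes $P_nX$ at all. Relatedly, your endgame is both heavier than necessary and internally inconsistent: the transformation $\Phi=\Id-\sum_k u_ke_k$ requires summability over $k$, i.e.\ precisely the joint $(n,k)$ control that you (correctly) say is not needed for weak uniqueness. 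The paper instead fixes $k=1$, takes the difference of the expected It\^o identities for the two solutions, lets $n\to\infty$ and then $t\to\infty$, and concludes from the equality of Laplace transforms $\E\int_0^\infty e^{-\bar c\lambda_1 s}g(X(s))\,\d s=\E\int_0^\infty e^{-\bar c\lambda_1 s}g(Y(s))\,\d s$ for all $g\in C^\theta_b(H)$ --- no limit of $u_{n,k}$, no transformed martingale problem, no homeomorphism $\Phi$.

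The second gap is the critical regime $\beta+\delta=\frac12$. Theorem~\ref{th:1} assumes only \ref{H0}--\ref{H4}, but the solvability of the Kolmogorov equation (Proposition~\ref{prop:kolm}) requires the smallness condition \ref{H5}. You propose to ``exploit \ref{H5}'', which the theorem does not grant, and your stopping/truncation at radius $R$ cannot supply it: truncating outside a ball makes $B$ bounded, not small in oscillation. The paper removes \ref{H5} by a genuinely different device: a cover of $H$ by small balls $U_k$ on which, by continuity, the oscillation of $B$ is below the threshold of \ref{H5_0}; modified drifts $B_k$ agreeing with $B$ on $U_k$ and satisfying \ref{H5_0} globally with $z_0=B(x_k)$; and the local uniqueness theorem for martingale problems \cite[Thm.~26]{Pri2015}, combined with the equivalence of martingale and weak mild solutions from \cite{Kun2013}, to patch local-in-space uniqueness into global uniqueness. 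Without this ingredient your argument proves the theorem only under the additional hypothesis \ref{H5}. Finally, for $x\in H$ with $B$ bounded, your Markov-restart argument on $(\varepsilon,\infty)$ presupposes a disintegration/conditional-uniqueness structure that is part of what must be proved; the paper avoids this by re-running the same It\^o--Tanaka scheme with the weighted convergence $\sup_{t\in(0,T]}t^{\tilde\alpha}\E\|X_n(t)-X(t)\|_{\mathcal H}\to0$ of Proposition~\ref{prop:Xn_alpha}.
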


\begin{theorem}[Pathwise uniqueness]
\label{th:2}
Assume \ref{H0}--\ref{H5} and that
\begin{align}
\label{trace1}
&\exists\,\varepsilon \in (0,1):\quad
A^{-(1+\theta)+2\beta+2(1-\theta)\delta+2\theta\alpha+2\varepsilon}\in\cL^1(H,H)\,,\\
&\label{ineq2_al}
\theta\alpha+(1-\theta)\delta<\alpha + \frac\theta{2}\,.
\end{align}
Then, pathwise uniqueness holds for \eqref{SDE}
with initial data in $D(A^\alpha)$, in the sense of Definition~\ref{uniqueness}.
Furthermore, if also $B$ is bounded and 
\begin{equation}\label{ineq3_al}
\theta\alpha+(1-\theta)\delta< \frac\theta{2}\,,
\end{equation}
then, pathwise uniqueness holds for \eqref{SDE}
with initial data in $H$, in the sense of Definition~\ref{uniqueness}.
\end{theorem}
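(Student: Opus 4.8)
The plan is to perform an It\^o--Tanaka transformation at the level of the finite-dimensional correctors $u_{n,k}$, to derive an energy estimate for the transformed difference of two solutions that is uniform in $n$ and $k$, and then to let $n\to\infty$. I fix two weak mild solutions $(X_1,W)$ and $(X_2,W)$ driven by the same cylindrical Wiener process with $X_1(0)=X_2(0)=x$, set $Y:=X_1-X_2$ and $X_i^n:=P_nX_i$, where $P_n$ is the orthogonal projection onto $\mathrm{Span}\{e_1,\dots,e_n\}$. Writing $W_k:=\langle W,e_k\rangle$, each mode solves $\d\langle X_i,e_k\rangle=(-\lambda_k\langle X_i,e_k\rangle+\langle B(X_i),e_k\rangle)\,\d t+\lambda_k^{-\delta}\,\d W_k$, and the singular contribution $\langle B(X_1)-B(X_2),e_k\rangle$ is precisely the term obstructing a direct Gronwall estimate on $Y$; the whole point is to trade it for the more regular correctors.

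First I would apply It\^o's formula to $u_{n,k}(X_i^n)$ for $i\in\{1,2\}$ and $k\in\{1,\dots,n\}$, and substitute the finite-dimensional Kolmogorov equation \eqref{eq:kolmI} to eliminate the second-order and Ornstein--Uhlenbeck drift terms. Recalling that $X_i^n$ solves $\d X_i^n=(-A_nX_i^n+B_n(X_i))\,\d t+A_n^{-\delta}P_n\,\d W$, this turns the drift of the corrected mode $Z_i^k:=\langle X_i,e_k\rangle+u_{n,k}(X_i^n)$ into $-\lambda_k\langle X_i,e_k\rangle+\bar c\lambda_k u_{n,k}(X_i^n)+\langle B_n(X_i)-B_n(X_i^n),Du_{n,k}(X_i^n)\rangle+\langle B(X_i)-B(X_i^n),e_k\rangle$ plus a stochastic integral, so that $\langle B(X_i),e_k\rangle$ no longer appears in isolation. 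The residual mismatch $B_n(X_i)-B_n(X_i^n)$ and $B(X_i)-B(X_i^n)$ is harmless: since $X_i$ takes values in $D(A^\alpha)$ and $B\in C^\theta_{\mathrm{loc}}(D(A^\alpha);D(A^{-\beta}))$ by \ref{H4}, it tends to zero in the relevant norm as $n\to\infty$ and contributes only an error that is dispatched at the end.

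Next I would run the energy estimate on $\rho_k:=Z_1^k-Z_2^k$, noting that $\rho_k=\langle Y,e_k\rangle+(u_{n,k}(X_1^n)-u_{n,k}(X_2^n))$ and that the original noise $\lambda_k^{-\delta}\,\d W_k$ cancels in the difference, leaving only the corrector martingale $\langle Du_{n,k}(X_1^n)-Du_{n,k}(X_2^n),A_n^{-\delta}P_n\,\d W\rangle$. Choosing the norm dictated by the datum --- the weighted sum $\sum_k\lambda_k^{2\alpha}|\rho_k|^2$ when $x\in D(A^\alpha)$, and $\sum_k|\rho_k|^2$ when $x\in H$ with $B$ bounded --- I would apply It\^o to $\|\rho\|^2$ and exploit the dissipation carried by the $-\lambda_k\rho_k$ part together with the uniform bounds on $\lambda_kDu_{n,k}$ and $\lambda_kD^2u_{n,k}$ supplied by Proposition~\ref{prop:est}, which encode precisely the $\bar cA$ scaling of \eqref{eq:quasi-kolmo}. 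The deterministic cross terms are controlled by the Lipschitz seminorms of the correctors, while the quadratic variation of the martingale and the accompanying It\^o correction, involving $A_n^{-\delta}Du_{n,k}$ and the trace of $A_n^{-2\delta}D^2u_{n,k}$, are summable in $k$ exactly because of the trace condition \eqref{trace1}; the balance inequality \eqref{ineq2_al} (respectively \eqref{ineq3_al} in the $H$-valued case) is what makes the exponents close, so that every contribution is either absorbed by the dissipation or bounded by $C\|\rho\|^2$. In the critical regime $\beta+\delta=\frac12$, where enlarging $\bar c$ no longer renders the corrector seminorms small, I would instead draw the missing smallness from the oscillation bound \ref{H5} on $B$, in the spirit of \cite{Pri2021,Pri2021c}.

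This produces $\E\|\rho(t)\|^2\le C\int_0^t\E\|\rho(s)\|^2\,\d s+R_n(t)$ with $R_n\to0$, whence Gronwall's lemma gives $\E\|\rho(t)\|^2\le e^{Ct}R_n(t)$. Since the correctors can be arranged to have Lipschitz seminorm strictly below one in the relevant norm (by taking $\bar c$ large in the subcritical case, or via \ref{H5} in the critical one), the decomposition $Y^n=\rho-(u_n(X_1^n)-u_n(X_2^n))$ yields $\|Y^n\|\lesssim\|\rho\|$ by absorption; letting $n\to\infty$ forces $\E\|Y(t)\|^2=0$ for every $t$, so $X_1=X_2$ almost surely and pathwise uniqueness follows, the $D(A^\alpha)$ and $H$ cases differing only in the chosen norm and the corresponding inequality. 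I expect the main obstacle to be the energy estimate of the third step: checking that, after the Kolmogorov substitution, all cross and martingale contributions can be simultaneously absorbed demands the joint use of \eqref{trace1} and the closing inequality, and the careful bookkeeping of the $k$-dependence (uniform in $n$) of the corrector norms from Proposition~\ref{prop:est} is delicate, most of all in the critical case, where the entire gain of smallness must be extracted from \ref{H5}.
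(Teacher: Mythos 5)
Your architecture coincides with the paper's up to the key point: you solve the scaled Kolmogorov equations, apply It\^o to $u_{n,k}(X_i^n)$, substitute the PDE, and form the corrected difference $\rho$ with components $\rho_k=\langle X_1-X_2,e_k\rangle+(u_{n,k}(X_1^n)-u_{n,k}(X_2^n))$. The genuine gap is in how you close the estimate. You propose a differential energy estimate (It\^o on $\|\rho\|^2$, dissipation from $-\lambda_k\rho_k$, then Gronwall), whereas the paper passes to the \emph{mild} formulation \eqref{diff} and closes by absorption in $C^0([0,T_0];L^2(\Omega;H))$ with $\bar c$ large and $T_0$ small, followed by patching. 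This is not cosmetic: the energy route loses one full power of $A$ and cannot close under \eqref{trace1}. Concretely, two terms break. (i) The quadratic variation of the corrector martingale enters your energy identity as $\int_0^t\sum_{k=1}^n\|A_n^{-\delta}(Du_{n,k}(X_1^n)-Du_{n,k}(X_2^n))\|_H^2\,\d s$, with \emph{no} convolution kernel in front; estimating via \eqref{est2'} with $\gamma'=-\delta$ gives a bound of order $\big(\sum_k\lambda_k^{2\beta+2(1-\theta)\delta-\theta}\big)\|X_1^n-X_2^n\|_H^2$, and the series $\sum_k\lambda_k^{-(\theta-2\beta-2(1-\theta)\delta)}$ is \emph{not} controlled by \eqref{trace1}: its exponent is smaller by $1-2\varepsilon$ than the trace exponent. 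For the Dirichlet Laplacian with $\alpha=\beta=\delta=0$ this forces $\theta>\frac d2$, i.e.\ essentially condition \eqref{CDF} of Remark~\ref{rmk:tech} that Theorem~\ref{th:2} is designed to remove, while \eqref{trace1} only asks $1+\theta-2\varepsilon>\frac d2$. (ii) The same loss occurs in the cross term produced by the scaling, $(\bar c+1)\int_0^t(\rho,A_n[u_n(X_1^n)-u_n(X_2^n)])_H\,\d s$: the dissipation $-2\int_0^t\|A_n^{1/2}\rho\|_H^2\,\d s$ can pay for only half a power of $A_n$, leaving $\|A_n^{1/2}(u_n(X_1^n)-u_n(X_2^n))\|_H^2\lesssim\big(\sum_k\lambda_k^{2\beta+2(1-\theta)\delta-\theta}\big)\|X_1^n-X_2^n\|_H^2$, the same divergent series. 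No choice of $\bar c$ fixes this, since the constants in Proposition~\ref{prop:est} give decay in $\bar c$ but the divergence is in $k$.

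The paper's mild formulation is precisely what supplies the missing factor: in $I_2$ the kernel gives $\int_0^te^{-(t-s)\lambda_k}\,\d s\le T^\varepsilon\lambda_k^{\varepsilon-1}$, and in the stochastic convolution $I_7$ the It\^o isometry gives $\int_0^te^{-2(t-s)\lambda_k}\,\d s\le\frac1{2\lambda_k}$, so every series collapses to $s_1=\sum_k\lambda_k^{-(1+\theta-2\beta-2(1-\theta)\delta-2\varepsilon)}<+\infty$, which is exactly \eqref{trace1}. So your third step must be replaced by the convolution estimates \eqref{I0}--\eqref{I7} and the absorption argument; Gronwall on $\|\rho\|^2$ cannot reach the stated hypotheses. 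Two smaller omissions: for data in $D(A^\alpha)$ the theorem assumes only $B\in C^\theta_{\rm loc}$, so after the bounded case one still needs the localisation step (stopping times, truncations $\overline B_N$, Yamada--Watanabe) of the paper's Step~2, which you do not address; and the case $\alpha>0$ with data in $H$ is handled in the paper not by a weighted norm $\sum_k\lambda_k^{2\alpha}|\rho_k|^2$ but by transporting the problem to $\mathcal H=D(A^\alpha)$ with $\tilde\beta=\alpha+\beta$, $\tilde\delta=\delta-\alpha$ and initial data in $D(\mathcal A^{-\alpha})$, which requires the singular-weight convergence of Proposition~\ref{prop:Xn_alpha} rather than Proposition~\ref{prop:Xn}.
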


\begin{theorem}[Continuous dependence]
\label{th:3}
Assume \ref{H0}--\ref{H5}, \eqref{trace1}, and that $B$ is bounded.
\begin{itemize}
\item If \eqref{ineq2_al} holds, then
there exists a constant $\mathfrak L>0$, depending only on the structural data $\alpha,\beta,\delta,\theta,T,B$, such that, for every weak mild solutions $(X,W)$ and $(Y,W)$ to \eqref{SDE} with initial data $x,y\in D(A^\alpha)$, respectively, in the sense of Definition~\ref{def-sol}, with the same cylindrical Wiener process $W$ and defined on the same probability space, it holds that 
\begin{equation}
\label{cont_dep_al}
\|X-Y\|_{C^0([0,T];L^2(\Omega; D(A^\alpha)))}\leq \mathfrak L\|x-y\|_{D(A^\alpha)}\,.
\end{equation}
\item If \eqref{ineq3_al} holds, then
there exists a constant $\mathfrak L>0$, depending only on the structural data $\alpha,\beta,\delta,\theta,T,B$, such that, for every weak mild solutions $(X,W)$ and $(Y,W)$ to \eqref{SDE} with initial data $x,y\in H$, respectively, in the sense of Definition~\ref{def-sol}, with the same cylindrical Wiener process $W$ and defined on the same probability space, it holds that 
\begin{equation}
\label{cont_dep_al2}
\|X-Y\|_{C^0([0,T];L^2(\Omega; H))}\leq \mathfrak L\|x-y\|_{H}\,.
\end{equation}
\end{itemize}
\end{theorem}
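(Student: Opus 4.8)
The plan is to establish \eqref{cont_dep_al}--\eqref{cont_dep_al2} as a quantitative strengthening of the pathwise-uniqueness mechanism of Theorem~\ref{th:2}, carrying along the contribution of the initial-data gap. Fix $T>0$ and two weak mild solutions $(X,W)$, $(Y,W)$ driven by the \emph{same} cylindrical noise, with data $x,y$. Because the noise cancels, the difference $Z:=X-Y$ has no stochastic part and satisfies, in mild form,
\[
Z(t)=e^{-tA}(x-y)+\int_0^t e^{-(t-s)A}\big(B(X(s))-B(Y(s))\big)\,\d s\,.
\]
The obstruction is that $B\in C^\theta_{\rm loc}(D(A^\alpha);D(A^{-\beta}))$ only gives $\|B(X)-B(Y)\|_{-\beta}\lesssim\|Z\|_\alpha^\theta$, so the convolution term is merely H\"older in $Z$ and a direct Gr\"onwall estimate is hopeless. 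The remedy is the It\^o--Tanaka trick realised through the finite-dimensional correctors $u_{n,k}$ solving \eqref{eq:kolmI}.

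I would work with the Galerkin projections $X^{(n)}:=P_nX$, $Y^{(n)}:=P_nY$, which solve genuine finite-dimensional It\^o equations with the common noise $P_nA^{-\delta}\,\d W$, and introduce the transformed components
\[
v_k:=\langle Z,e_k\rangle+\big(u_{n,k}(X^{(n)})-u_{n,k}(Y^{(n)})\big)\,,\qquad k\le n\,.
\]
Applying It\^o's formula to $u_{n,k}(X^{(n)})$ and $u_{n,k}(Y^{(n)})$ and using \eqref{eq:kolmI} to trade the projected Ornstein--Uhlenbeck generator plus the drift for $\bar c\lambda_k u_{n,k}-\langle B(\cdot),e_k\rangle$, the singular sources $\pm\langle B(X^{(n)})-B(Y^{(n)}),e_k\rangle$ cancel against the one carried by $\langle Z,e_k\rangle$. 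What survives is a drift $-\lambda_kv_k+(1+\bar c)\lambda_k\big(u_{n,k}(X^{(n)})-u_{n,k}(Y^{(n)})\big)$, a remainder involving $B(X)-B(X^{(n)})$ and $B(Y)-B(Y^{(n)})$, and a genuine martingale $\int\langle Du_{n,k}(X^{(n)})-Du_{n,k}(Y^{(n)}),P_nA^{-\delta}\,\d W\rangle$ --- note that the It\^o--Tanaka substitution reintroduces a stochastic term even though $Z$ itself is noiseless.

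I would then run an energy estimate on $t\mapsto\E\|v(t)\|_\alpha^2=\E\sum_{k\le n}\lambda_k^{2\alpha}v_k^2$ via It\^o's formula, taking expectations to kill the martingale. The dissipative term $-2\E\sum_k\lambda_k^{2\alpha+1}v_k^2=-2\E\|v\|_{\alpha+1/2}^2$ is the engine of the estimate. By the uniform-in-$(n,k)$ first- and second-order bounds of Proposition~\ref{prop:est}, together with the trace condition \eqref{trace1} ensuring summability in $k$, the corrector is Lipschitz from $D(A^\alpha)$ into $D(A^{\alpha+1/2})$, so both the mixed term $(1+\bar c)\lambda_k\big(u_{n,k}(X^{(n)})-u_{n,k}(Y^{(n)})\big)$ and the quadratic variation are controlled by $\|Z^{(n)}\|_\alpha^2$ --- crucially in a \emph{linear}, rather than merely H\"older, fashion. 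Conditions \eqref{ineq2_al} (respectively \eqref{ineq3_al} in the $H$-valued case, where boundedness of $B$ is what allows to start from $x\in H$) are exactly the exponent balances that make these corrector estimates hold, while in the critical regime $\beta+\delta=\tfrac12$ the smallness \ref{H5} keeps the relevant corrector norm below the needed threshold. The remainder from $B(X)-B(X^{(n)})$, $B(Y)-B(Y^{(n)})$ vanishes as $n\to\infty$ since $X^{(n)}\to X$, $Y^{(n)}\to Y$ in $D(A^\alpha)$ and $B$ is continuous. This yields $\tfrac{\d}{\d t}\E\|v\|_\alpha^2\le C\,\E\|v\|_\alpha^2+o_n(1)$ and, by Gr\"onwall, $\E\|v(t)\|_\alpha^2\le e^{CT}\big(\E\|v(0)\|_\alpha^2+o_n(1)\big)$ uniformly on $[0,T]$.

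Finally I would transfer the bound back to $Z$. Choosing the constant $\bar c$ in \eqref{eq:kolmI} large makes the $D(A^\alpha)$-Lipschitz constant $\ell$ of $u_n=\sum_k u_{n,k}e_k$ strictly less than $1$, so $\xi\mapsto\xi+u_n(\xi)$ is a uniform bi-Lipschitz perturbation of the identity; since $v=\big(X^{(n)}+u_n(X^{(n)})\big)-\big(Y^{(n)}+u_n(Y^{(n)})\big)$, this gives $\|v(0)\|_\alpha\le(1+\ell)\|x-y\|_\alpha$ and $\|Z^{(n)}\|_\alpha\le(1-\ell)^{-1}\|v\|_\alpha$. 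Letting $n\to\infty$ absorbs $o_n(1)$ and, by Fatou, upgrades $Z^{(n)}$ to $Z$, whence $\sup_{t\le T}\E\|Z(t)\|_\alpha^2\le\mathfrak L^2\|x-y\|_\alpha^2$, which is \eqref{cont_dep_al}; the statement \eqref{cont_dep_al2} follows verbatim with $\alpha$ replaced by $0$ under \eqref{ineq3_al}. I expect the third step to be the main obstacle: one must secure the uniform corrector estimates and verify that, after weighting by $\lambda_k^{2\alpha}$ and summing over $k$, the growth produced by $(1+\bar c)\lambda_k u_{n,k}$ and by the second-derivative (quadratic-variation) terms is strictly dominated by the dissipation $-2\E\|v\|_{\alpha+1/2}^2$ --- which is precisely where \eqref{trace1}, \eqref{ineq2_al}/\eqref{ineq3_al}, and in the critical case \ref{H5}, are consumed.
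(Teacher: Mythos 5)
Your transformation is exactly the paper's: Galerkin projections, the correctors $u_{n,k}$ of \eqref{eq:kolmI}, and the It\^o--Tanaka cancellation leaving the drift $-\lambda_k v_k+(1+\bar c)\lambda_k\bigl(u_{n,k}(X_n)-u_{n,k}(Y_n)\bigr)$, a martingale, and remainders in $B(X)-B(X_n)$. The gap is in the estimation step. The paper does \emph{not} run a global energy estimate: it writes the transformed difference in mild form and estimates mode by mode against the kernels $e^{-(t-s)\lambda_k}$, using $\lambda_k\int_0^t e^{-(t-s)\lambda_k}\,\d s\leq 1$ (and $\int_0^t e^{-2(t-s)\lambda_k}\,\d s\leq\frac{1}{2\lambda_k}$ in the It\^o isometry for $I_7$), so that every mode gains a factor $\lambda_k^{-1}$ (or $\lambda_k^{\varepsilon-1}$, traded for smallness in $T_0$) \emph{before} summing in $k$; this is precisely what makes \eqref{trace1} sufficient. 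Your energy estimate throws this gain away. After Young's inequality against the only dissipation available, $-2\E\norm{v}_{\alpha+\frac12}^2$, the mixed term costs $(1+\bar c)^2\,\E\sum_{k\leq n}\lambda_k^{2\alpha+1}\bigl|u_{n,k}(X_n)-u_{n,k}(Y_n)\bigr|^2$, and the quadratic variation enters as a plain time integral of $\E\sum_k\lambda_k^{2\alpha}\norm{A_n^{-\delta}(Du_{n,k}(X_n)-Du_{n,k}(Y_n))}_H^2$ with no decaying kernel. By the bounds \eqref{est1'}--\eqref{est2'} of Proposition~\ref{prop:est} (which are sharp in their $\lambda_k$-decay), closing either term linearly in $\E\norm{X_n-Y_n}^2$ requires, in the core case $\alpha=0$,
\[
\sum_{k=1}^\infty \lambda_k^{\,2\beta-\theta+2(1-\theta)\delta}<+\infty\,,
\qquad\text{i.e.}\qquad
A^{-(\theta-2\beta-2(1-\theta)\delta)}\in\cL^1(H,H)\,,
\]
which is stronger than \eqref{trace1} by a full power of $\lambda_k$ (up to $2\varepsilon$) and is false in regimes the theorem covers: in the critical case $\beta=\frac12$, $\delta=0$ the exponent equals $1-\theta>0$, so the series diverges for any operator; already for $\beta=\delta=0$ with $A$ the Dirichlet Laplacian it forces $\theta>\frac d2$, impossible in dimension $d\geq2$. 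So your claim that Proposition~\ref{prop:est} together with \eqref{trace1} makes the corrector Lipschitz from $D(A^\alpha)$ into $D(A^{\alpha+\frac12})$ is incorrect, and the Gr\"onwall inequality $\frac{\d}{\d t}\E\norm{v}_\alpha^2\leq C\,\E\norm{v}_\alpha^2+o_n(1)$ cannot be established under the stated hypotheses --- this is exactly the step you flagged as the main obstacle, and it does not go through.

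A repair inside your framework is to postpone the summation in $k$: from the componentwise inequality $\frac{\d}{\d t}\E v_k^2\leq-\lambda_k\E v_k^2+(1+\bar c)^2\lambda_k\E|\Delta u_{n,k}|^2+Q_k$, variation of constants gives $\E v_k^2(t)\leq e^{-\lambda_k t}v_k^2(0)+(1+\bar c)^2\sup_s\E|\Delta u_{n,k}(s)|^2+\lambda_k^{-1}\sup_s Q_k(s)$, which restores the correct $k$-decay --- but this is the paper's mild-formulation estimate ($I_1$--$I_7$ in the proof of Theorems~\ref{th:2}--\ref{th:3}) in disguise, including the need to sacrifice an $\varepsilon$ and take $T_0$ small with patching, since the factor $(1+\bar c)^2/\bar c^{\,1+\theta-2(1-\theta)\delta}$ \emph{grows} in $\bar c$ (note $1+\theta-2(1-\theta)\delta<2$), so smallness of that term can come only from $T_0^{2\varepsilon}$, never from $\bar c$ or from the dissipation. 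Two further inaccuracies: for $\alpha>0$ your weighted estimates need $\gamma=-\alpha$ in \eqref{est1'}, which Proposition~\ref{prop:est} does not provide; the paper instead rescales to $\mathcal H=D(A^\alpha)$, $\tilde\beta=\alpha+\beta$, $\tilde\delta=\delta-\alpha$ and works at $\alpha=0$ there. And the second bullet ($x,y\in H$ with $\alpha>0$) is not ``verbatim with $\alpha$ replaced by $0$'': since $B$ lives on $D(A^\alpha)$ while the data are only in $H$, the paper needs the time-weighted approximation and the negative-regularity argument of the Appendix (Propositions~\ref{prop:Xn_alpha} and \ref{th:uniq_path-app}).
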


\begin{rmk}\label{rmk:tech}
Let us briefly comment on a technical aspect.
In \cite{AddBig, AddBig2, Dap-Fla2010}, the pathwise uniqueness is proved in the case $\alpha=\beta=0$ under the assumption
   \begin{equation}\label{CDF}
\sum_{k\in\N}\lambda_k^{-1}\norm{\scal{B}{e_k}}_{C^\theta_b(H)}^2<+\infty.
\end{equation}
This significantly restricts the class of H\"older continuous perturbations \( B \) that can be considered. Indeed, if \( A \) is a realisation of the negative Laplace operator, then \eqref{CDF} is satisfied for every \( \theta \)-H\"older continuous function \( B \) only in space dimension $1$. In Theorem~\ref{th:2},
condition \ref{CDF} is significantly improved with the assumption \eqref{trace1}, 
that in the case $\alpha=\beta=0$ 
with the negative Laplace operator
also allows for higher dimensions and every \( \theta \)-H\"older continuous function \( B \).
The main idea to remove condition \eqref{CDF}
will be to suitably scale the approximated Kolmogorov equations
in order to obtain sharper estimates of the 
stochastic integral term that appears in the
It\^o-Tanaka trick. We refer to Section~\ref{sec:Kolmo-sol}
for details.
\end{rmk}

\begin{rmk}\label{rmk:pic}
Notice that assumption \ref{H5} is required only in Theorems~\ref{th:2} and~\ref{th:3} in the critical regime $\beta + \delta = \frac{1}{2}$.
The techniques from 
 \cite[Thm.~26]{Pri2015} and \cite[Lem.~22]{Pri2021} permit to remove it in the weak uniqueness case only, and do not apply to the pathwise setting.
 Nevertheless, this is not restrictive from the  point of view of the applications, where suitable tuning of the parameters takes place. The open problem of removing assumption \ref{H5} in the pathwise uniqueness case is surely interesting and
 currently under investigation.
\end{rmk}


\section{The Kolmogorov equation}
\label{sec:Kolmo-sol}

In this section we 
study the Kolmogorov equation
associated to \eqref{SDE}: this will be done 
only in the case $\alpha=0$.
For this reason, 
we reformulate for clarity of presentation
the assumptions \ref{H2}, \ref{H3}, and \ref{H4} in the 
special case $\alpha=0$.
\begin{enumerate}[start=2,label={{(H\arabic*}$_0$)}]
\item \label{H2_0} $\alpha=0$, $\delta\in(-\frac12, \frac12]$
and $\beta\in[0,\frac12-\delta]$.
\item \label{H3_0} There exists $\eta\in (0,1)$ such that $A^{-(1+2\delta)+\eta}\in\cL^1(H,H)$.
\item \label{H4_0} 
$B\in C^\theta_{{\rm loc}}(H;D(A^{-\beta}))$ for some $\theta\in (0,1)$.
\item \label{H5_0} 
In the regime $\beta+\delta=\frac12$, there exists $z_0\in D(A^{-\beta})$ such that 
\[
\sup_{x\in H}\norm{B(x)-z_0}_{D(A^{-\beta})}=:C_{\tilde  B}<
\frac{\theta(1-\theta)(1-\beta)}
{4M_{\beta,\delta,\theta}(2-\theta)}\,,
\]
where $M_{\beta,\delta,\vartheta}$ is the implicit constant 
appearing in Proposition~\ref{stimeHolder}.
\end{enumerate}

\subsection{The Ornstein-Uhlenbeck semigroup}
Let $R_t$ be the (scalar) Ornstein-Uhlenbeck semigroup defined by $R_0\varphi=\varphi$ and
\begin{align}
\label{OU}
(R_t\varphi)(x)=\int_{H}
\varphi(e^{-tA}x+y)\,\mu_t(\d y)\,, \quad x\in H\,, \quad t>0\,,
\quad \varphi\in C^0_b(H)\,,
\end{align}
where, for every $t>0$, $\mu_t$ is the Gaussian measure on $H$
with mean $0$ and covariance operator 
\[
Q_t:=\int_0^te^{-2sA}A^{-2\delta}\,\d s
=\frac12
A^{-1-2\delta}(\Id_H-e^{-2tA})
\]
and $\Id_H$ is the identity operator on $H$.
For every $n\in\N_+$ we set
\[
A_n:=P_nAP_n=AP_n=P_nA\,,
\]
where $P_n$ is the orthogonal projection on $H_n:=\operatorname{span}\{e_1,\ldots,e_n\}$. The operator ${A_n}\in\cL(H_n,H_n)$ is the generator of a uniformly continuous semigroup $e^{-tA_n}$ which fulfills
$e^{-tA_n}=e^{-tA} P_n=P_n e^{-tA}$ for every $t\geq0$.
We underline that $P_n$, $A_n$ and $e^{-tA_n}$ commute for every $n\in\N_+$ 
and $t\geq0$.

Analogously, for every $n\in\N_+$, we define the (scalar) finite-dimensional Ornstein-Uhlenbeck semigroup $R_{n,t}$ by setting
\[
(R_{n,t}\varphi)(x)=\int_{H_n}\varphi(e^{-tA_n}x+y)\,\mu_{n,t}(\d y)
\,,\quad x\in H\,, \quad t>0\,,
\quad \varphi\in C^0_b(H_n)\,,
\]
where, for every $t>0$ and $n\in\N_+$,
$\mu_{n,t}$ is the Gaussian measure on $H_n$ with mean $0$ and covariance operator 
\[
Q_{n,t}:=\int_0^te^{-2sA_n}A_n^{-2\delta}\,\d s=
\frac{1}{2}A_n^{-1-2\delta}(\Id_{H_n}-e^{-2tA_n})\,.
\]
For every $t>0$ and $n\in\N_+$ we set
\[
\Gamma_t:=Q^{-\frac12}_t e^{-tA}, \qquad\Gamma_{n,t}:=Q_{n,t}^{-\frac12}e^{-tA_n}
\]
and we note that
\begin{equation}\label{proiezioni}
Q_{n,t}=Q_{t}P_n=P_nQ_t\,,
\qquad \Gamma_{n,t}=\Gamma_{t}P_n=P_n\Gamma_t\,.
\end{equation}

By hypothesis~\ref{H1} and
the fact that $P_n$  and $A$ commute, standard computations (see \cite[Lemma 3.1]{BOS}) show that for all $\gamma\geq-\delta$ there exist constants $M_\gamma, M_{\gamma,\delta}>0$ such that,
for every $n\in\N_+$ and $t>0$ we have
\begin{align}
\norm{e^{-tA_n}}_{\mathscr{L}(H_n)}&\leq 1\,,\notag\\
\norm{A_n^\gamma e^{-tA_n}}_{\mathscr{L}(H_n)}
&\leq \frac{M_\gamma}{t^{\max\{0,\gamma\}}}\,,\label{semiS}\\
\label{stimeproiezione}
\norm{A_n^{\gamma}\Gamma_{n,t}}_{\mathscr{L}(H_n)}&\leq 
\frac{M_{\gamma,\delta}}{t^{\frac12+\gamma+\delta}}\,.
\end{align}
In particular, we stress that for every $\gamma\geq -\delta$ the constants $M_{\gamma,\delta}$ and $M_\gamma$, are independent of $n\in\N_+$.

We recall the main properties of the Ornstein-Uhlenbeck semigroup 
in the following proposition.
\begin{proposition}
    \label{prop:OU}
    Assume \ref{H0}, \ref{H1}, \ref{H2_0}, \ref{H3_0} and let $t>0$. 
    Then, for every $n\in\N_+$ it holds
    \[
    R_{n,t}(C^0_b(H_n))\subseteq C_b^{\infty}(H_n)\,.
    \]
    Moreover, for every $\gamma\geq-\delta$ there exist 
    $M_{\gamma,\delta}>0$ and 
    $M_\gamma>0$
    such that
    for every and $n\in\N_+$ it holds
    \begin{alignat*}{2}
        \norm{A_n^\gamma D(R_{n,t}\varphi)}_{C^0_b(H_n; H_n)}
        &\leq \frac{M_{\gamma,\delta}}{t^{\frac12+\gamma+\delta}}
        \norm{\varphi}_{C^0_b(H_n)}
        \qquad&&\forall\,\varphi\in C^0_b(H_n)\,,\\
        \norm{A_n^\gamma D(R_{n,t}\varphi)}_{C^0_b(H_n; H_n)}
        &\leq \frac{M_{\gamma}}{t^{\max\{0,\gamma\}}}
        \norm{\varphi}_{C^1_b(H_n)}
        \qquad&&\forall\,\varphi\in C^1_b(H_n)\,,\\
        \norm{D(A_n^{\gamma}D(R_{n,t}\varphi))}_{C^0_b(H_n; \cL(H_n))}
        &\leq\frac{M_{\gamma,\delta}}{t^{1+\gamma+2\delta}}\|\varphi\|_{C^0_b(H_n)}
        \qquad&&\forall\,\varphi\in C^0_b(H_n)\,,\\
        \norm{D(A_n^{\gamma}D(R_{n,t}\varphi))}_{C^0_b(H_n; \cL(H_n))}
        &\leq\frac{M_{\gamma,\delta}}{t^{\frac12+\gamma+\delta}}\|\varphi\|_{C^1_b(H_n)}
        \qquad&&\forall\,\varphi\in C^1_b(H_n)\,.
    \end{alignat*}
\end{proposition}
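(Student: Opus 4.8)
The plan is to reduce all four bounds to the finite-dimensional Bismut--Elworthy--Li (Gaussian integration-by-parts) identity, which moves $x$-derivatives of $R_{n,t}\varphi$ onto the Gaussian density, and then to read off each estimate from the spectral bounds \eqref{semiS}--\eqref{stimeproiezione}. First I would settle smoothing: since $A_n$ is positive definite on the finite-dimensional space $H_n$ and $\Id_{H_n}-e^{-2tA_n}$ is positive definite for $t>0$, the covariance $Q_{n,t}=\frac12 A_n^{-1-2\delta}(\Id_{H_n}-e^{-2tA_n})$ is invertible, so $\mu_{n,t}$ has a strictly positive $C^\infty$ density $p(\cdot\,;m)$ centred at $m$. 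Writing $(R_{n,t}\varphi)(x)=\int_{H_n}\varphi(z)\,p(z;e^{-tA_n}x)\,\d z$ and differentiating under the integral — justified by boundedness of $\varphi$ and the rapid decay of every derivative of the Gaussian density — gives $R_{n,t}\varphi\in C^\infty_b(H_n)$. Differentiating the density in $x$ once and changing variables back yields, for every $\varphi\in C^0_b(H_n)$, the representation
\[
D(R_{n,t}\varphi)(x)=\int_{H_n}\varphi(e^{-tA_n}x+y)\,\Gamma_{n,t}Q_{n,t}^{-1/2}y\,\mu_{n,t}(\d y)\,,
\]
where I use that $\Gamma_{n,t}=Q_{n,t}^{-1/2}e^{-tA_n}$, being a function of the self-adjoint $A_n$, is self-adjoint and commutes with $A_n^\gamma$.

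\textbf{First-derivative bounds.} For the first estimate, I pair with $h$, move $A_n^\gamma$ across by self-adjointness and commutation, and apply Cauchy--Schwarz in $L^2(\mu_{n,t})$ together with the variance identity $\int_{H_n}\scal{v}{Q_{n,t}^{-1/2}y}^2\mu_{n,t}(\d y)=\norm{v}^2$ — valid because $Q_{n,t}^{-1/2}y$ is a standard Gaussian on $H_n$. This gives $\norm{A_n^\gamma D(R_{n,t}\varphi)(x)}\le\norm{\varphi}_{C^0_b}\,\norm{A_n^\gamma\Gamma_{n,t}}_{\cL(H_n)}$, and \eqref{stimeproiezione} produces the factor $M_{\gamma,\delta}t^{-1/2-\gamma-\delta}$. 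For the second estimate, when $\varphi\in C^1_b(H_n)$ I differentiate directly, obtaining $A_n^\gamma D(R_{n,t}\varphi)(x)=\int_{H_n}A_n^\gamma e^{-tA_n}D\varphi(e^{-tA_n}x+y)\,\mu_{n,t}(\d y)$, and bound the integrand by $\norm{A_n^\gamma e^{-tA_n}}_{\cL(H_n)}\norm{\varphi}_{C^1_b}$, so \eqref{semiS} gives $M_\gamma t^{-\max\{0,\gamma\}}\norm{\varphi}_{C^1_b}$.

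\textbf{Second-derivative bounds.} For the fourth estimate I combine the representation above (valid since $\varphi\in C^0_b$) with $\varphi\in C^1_b$, differentiating once more so that the derivative lands on $\varphi$:
\[
\scal{D(A_n^\gamma D(R_{n,t}\varphi))(x)[h]}{k}
=\int_{H_n}\scal{D\varphi(e^{-tA_n}x+y)}{e^{-tA_n}h}\,\scal{Q_{n,t}^{-1/2}y}{A_n^\gamma\Gamma_{n,t}k}\,\mu_{n,t}(\d y)\,.
\]
Bounding the first factor by $\norm{\varphi}_{C^1_b}\norm{h}$ and applying Cauchy--Schwarz plus the variance identity to the second collapses the $k$-dependence to $\norm{A_n^\gamma\Gamma_{n,t}k}\le\norm{A_n^\gamma\Gamma_{n,t}}_{\cL(H_n)}\norm{k}$, yielding $M_{\gamma,\delta}t^{-1/2-\gamma-\delta}\norm{\varphi}_{C^1_b}$. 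The third estimate is the delicate one, since now $\varphi$ is only $C^0_b$: here I would exploit the semigroup property $R_{n,t}=R_{n,t/2}R_{n,t/2}$, apply the representation for the outer half to $g:=R_{n,t/2}\varphi$ — which is $C^1_b$ with $\norm{Dg}_{C^0_b}\le M_{0,\delta}(t/2)^{-1/2-\delta}\norm{\varphi}_{C^0_b}$ by the first estimate with $\gamma=0$ — and then differentiate $g$ directly exactly as above, with $t$ replaced by $t/2$. The same Cauchy--Schwarz/variance argument gives $\norm{Dg}_{C^0_b}\norm{A_n^\gamma\Gamma_{n,t/2}}_{\cL(H_n)}$, and multiplying the two half-time singularities produces $M_{\gamma,\delta}t^{-1-\gamma-2\delta}\norm{\varphi}_{C^0_b}$.

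\textbf{Main obstacle.} The genuine difficulty is not the differentiation but keeping every constant \emph{independent of $n$}. The temptation to estimate the vector $A_n^\gamma\Gamma_{n,t}Q_{n,t}^{-1/2}y$ in $L^2(\mu_{n,t})$ directly would introduce the Hilbert--Schmidt norm $\norm{A_n^\gamma\Gamma_{n,t}}_{\mathscr{L}^1}$, which degenerates as $n\to\infty$. I avoid this by testing the bilinear form against an arbitrary $k$ \emph{before} applying Cauchy--Schwarz: self-adjointness and the commuting functional calculus of $A_n$ then make the cross term collapse to the operator-norm quantity $\norm{A_n^\gamma\Gamma_{n,t}k}$, controlled uniformly in $n$ by \eqref{stimeproiezione}. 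Verifying this exact cancellation, together with the routine justification of differentiation under the integral sign and of the change of variables in the representation formula, is the only real point requiring care.
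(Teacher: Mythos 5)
Your proposal is correct and, for the smoothing statement, the two first-derivative bounds, and the fourth (mixed) second-derivative bound, it is essentially the paper's own argument: the paper simply quotes the representation formulas \eqref{D10}, \eqref{D20}, \eqref{D11}, \eqref{D21} from Da Prato--Zabczyk, inserts $v=A_n^\gamma h$, $w=A_n^\gamma h$ using the commuting functional calculus of $A_n$, and concludes by Cauchy--Schwarz (your variance identity) together with \eqref{stimeproiezione} and \eqref{semiS} --- exactly what you do after deriving those formulas by hand from the Gaussian density. The genuine divergence is the third estimate. The paper does not split time: it uses the pure second-derivative formula \eqref{D20}, whose integrand $(\Gamma_{n,t}w,Q_{n,t}^{-1/2}y)_H(\Gamma_{n,t}z,Q_{n,t}^{-1/2}y)_H-(\Gamma_{n,t}w,\Gamma_{n,t}z)_H$ is bounded via Cauchy--Schwarz by $2\norm{A_n^\gamma\Gamma_{n,t}}_{\cL(H_n)}\norm{\Gamma_{n,t}}_{\cL(H_n)}\norm{h}\norm{z}$, yielding the singularity $t^{-1-\gamma-2\delta}$ in one shot; you instead invoke the Chapman--Kolmogorov property $R_{n,t}=R_{n,t/2}R_{n,t/2}$ and compose the $\gamma=0$ first-derivative bound with the mixed bound at time $t/2$. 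Both routes give the same exponent and the same uniformity in $n$; yours requires one extra (standard, but unproved in your sketch) ingredient --- the semigroup property of $R_{n,t}$ --- while avoiding the two-Gaussian-factor formula \eqref{D20} altogether, and you correctly observe the point that makes the chaining close, namely that the mixed representation only sees $\norm{Dg}_{C^0_b}$ and not the full $C^1_b$-norm of $g=R_{n,t/2}\varphi$. One caveat, shared with the paper rather than introduced by you: your chaining uses \eqref{stimeproiezione} with $\gamma=0$ (to bound $\norm{D(R_{n,t/2}\varphi)}_{C^0_b}$ by $M_{0,\delta}(t/2)^{-\frac12-\delta}\norm{\varphi}_{C^0_b}$), and $\gamma=0$ lies in the admissible range $\gamma\geq-\delta$ only when $\delta\geq0$; the paper's one-shot argument needs exactly the same bound on $\norm{\Gamma_{n,t}}_{\cL(H_n)}$, so in the regime $\delta<0$ both proofs of the third estimate rely on an application of \eqref{stimeproiezione} outside its stated range, and this restriction is inherited from, not created by, your approach.
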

\begin{proof}
    By \cite[Thm.~6.2.2 \& Prop.~6.2.9]{Dap-Zab2002},
    for every $\varphi\in C^0_b(H_n)$ and $x,v,w,z\in H_n$ it holds that
    \begin{align}
    &D(R_{n,t}\varphi)(x)[v]=\int_{H_n}
    \left(\Gamma_{n,t}v,
    Q_{n,t}^{-\frac12}y\right)_H
    \varphi(e^{-tA_{n}}x+y)\,\mu_{n,t}(\d y)\label{D10} \,,\\
    &D^2(R_{n,t}\varphi)(x)[w,z]\notag\\
    &\qquad=\int_{H_n}
    \left[\left(\Gamma_{n,t}w,Q_{n,t}^{-\frac12}y\right)_H
    \left(\Gamma_{n,t}z,Q_{n,t}^{-\frac12}y\right)_H
    -\left(\Gamma_{n,t}w,\Gamma_{n,t}z\right)_H\right]
    \varphi(e^{-tA_{n}}x+y)\,\mu_{n,t}(\d y)\label{D20},
\end{align}
    if in addition $\varphi\in C^1_b(H_n)$ then it holds that
\begin{align}
    &D(R_{n,t}\varphi)(x)[v]=\int_{H_n}
    \left(D\varphi(e^{-tA_{n}}x+y),
    e^{-tA_n}v\right)_H\,\mu_{n,t}(\d y)\label{D11} \,,\\
    &D^2(R_{n,t}\varphi)(x)[w,z]
    =\int_{H_n}
    \left(\Gamma_{n,t}w,Q_{n,t}^{-\frac12}y\right)_H
    \left(D\varphi(e^{-tA_{n}}x+y),e^{-tA_n}z\right)_H
    \,\mu_{n,t}(\d y)\label{D21}\,.
\end{align}
Noting that
\begin{alignat*}{2}
    A_n^\gamma D(R_{n,t}\varphi)(x)[h]&=
    D(R_{n,t}\varphi)(x)[A_n^\gamma h] \quad&&\forall\,h\in H_n\,,\\
   D(A_n^{\gamma} D(R_{n,t}\varphi))(x)[h,z]
   &=D^2(R_{n,t}\varphi)(x)[A_n^{\gamma}h, z] \quad&&\forall\,h,z\in H_n\,,
\end{alignat*}
exploiting \eqref{proiezioni}, recalling that $A_n$, $P_n$, and $e^{-tA_{n}}$ commute, and
choosing $v=A_n^{\gamma}h$ and $w=A_n^{\gamma}h$ for $h\in H_n$ in \eqref{D10}, \eqref{D20}, \eqref{D11} and \eqref{D21} we conclude the proof thanks to the H\"older inequality, 
\cite[Sec.~1.2.4]{Dap-Zab2002} and
\eqref{stimeproiezione}.
\end{proof}

By using Proposition \ref{prop:OU}, standard interpolation arguments (see \cite[Thm.~1.12 of Ch.~5]{CR88}, and \cite[Prop.~2.3.3]{Dap-Zab2002}), we obtain the following result.

\begin{proposition}\label{stimeHolder}
Assume \ref{H0}, \ref{H1}, \ref{H2_0}, \ref{H3_0}, let $t>0$ and $\vartheta\in(0,1)$.
Then for every $\gamma\geq-\delta$ there exists
    $(M_{\gamma,\delta, \vartheta})_{\gamma\geq-\delta}$
    such that
    for every $n\in\N_+$ and 
    $\varphi\in C_b^\vartheta(H_n)$ it holds that
\begin{align}
\|A_n^\gamma D(R_{n,t}\varphi)\|_{C^0_b(H_n;H_n)}&\leq M_{\gamma,\delta,\vartheta}\frac{1}{t^{(1-\vartheta)(\frac12+\gamma+\delta)+\vartheta\max\{0,\gamma\}}}
\|\varphi\|_{C^\vartheta_b(H_n)}\,,\label{Hstima1var}\\
\|A_n^\gamma D(R_{n,t}\varphi)\|_{C^\vartheta_b(H_n;H_n)}&\leq  M_{\gamma,\delta,\vartheta}
\frac{1}{t^{\frac12+\gamma+\delta}}
\|\varphi\|_{C^\vartheta_b(H_n)}\,,\label{Hstima1.5var}\\
\|D(A_n^{\gamma}D(R_{n,t}\varphi))
\|_{C^0_b(H_n;\mathscr{L}(H_n))}
&\leq M_{\gamma,\delta,\vartheta}
\frac{1}{t^{1-\frac{\vartheta}{2}+\gamma+\delta(2-\vartheta)}}\|\varphi\|_{C^\vartheta_b(H_n)}\,.\label{Hstima2var}
\end{align}
\end{proposition}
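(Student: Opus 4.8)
The plan is to read off all three bounds from Proposition~\ref{prop:OU} by real interpolation, viewing $\varphi\mapsto A_n^\gamma D(R_{n,t}\varphi)$ and $\varphi\mapsto D(A_n^\gamma D(R_{n,t}\varphi))$ as linear operators acting on the scale $C^0_b,\,C^\vartheta_b,\,C^1_b$. The starting observation is that the four estimates of Proposition~\ref{prop:OU} come exactly in two endpoint pairs: for each of the two operators we have a bound when $\varphi$ is merely continuous ($C^0_b$) and a bound when it is Lipschitz ($C^1_b$). I would fix once and for all the classical real-interpolation identity $C^\vartheta_b=(C^0_b,C^1_b)_{\vartheta,\infty}$ (for functions valued in $H_n$, $\mathscr{L}(H_n)$, or $\R$), whose norm-equivalence constants depend only on $\vartheta$, and then interpolate each pair with parameter $\vartheta$; the references \cite{CR88,Dap-Zab2002} supply precisely this machinery.

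For \eqref{Hstima1var} I interpolate only on the source, with target fixed at $C^0_b(H_n;H_n)$: the map $\varphi\mapsto A_n^\gamma D(R_{n,t}\varphi)$ is bounded from $C^0_b$ with norm $\lesssim t^{-(\frac12+\gamma+\delta)}$ and from $C^1_b$ with norm $\lesssim t^{-\max\{0,\gamma\}}$. Real interpolation yields a bound from $C^\vartheta_b$ with exponent $(1-\vartheta)(\frac12+\gamma+\delta)+\vartheta\max\{0,\gamma\}$, which is \eqref{Hstima1var}. Identically, for \eqref{Hstima2var} I interpolate $\varphi\mapsto D(A_n^\gamma D(R_{n,t}\varphi))$ between its $C^0_b$-bound (exponent $1+\gamma+2\delta$) and its $C^1_b$-bound (exponent $\frac12+\gamma+\delta$); the convex combination $(1-\vartheta)(1+\gamma+2\delta)+\vartheta(\frac12+\gamma+\delta)$ collapses, after a one-line computation, to $1-\frac{\vartheta}{2}+\gamma+\delta(2-\vartheta)$, the claimed exponent.

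The remaining bound \eqref{Hstima1.5var} is the delicate one, since here the target is itself a Hölder space, so I would interpolate source and target simultaneously with the same functor. The two endpoints are $A_n^\gamma D(R_{n,t}\cdot)\colon C^0_b\to C^0_b(H_n;H_n)$ with norm $\lesssim t^{-(\frac12+\gamma+\delta)}$, and $A_n^\gamma D(R_{n,t}\cdot)\colon C^1_b\to C^1_b(H_n;H_n)$ with the \emph{same} norm $\lesssim t^{-(\frac12+\gamma+\delta)}$, the latter obtained by adding the second and fourth estimates of Proposition~\ref{prop:OU} (the $C^0$-part and the derivative part of the $C^1_b(H_n;H_n)$-norm) and using $\max\{0,\gamma\}\le\frac12+\gamma+\delta$, which holds since $\delta>-\frac12$ and $\gamma\ge-\delta$, to absorb the $C^0$-part into the same power of $t$ for short times; the large-$t$ regime is harmless because the derivatives of $R_{n,t}$ decay exponentially. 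As both endpoints carry the identical exponent, interpolating $(C^0_b,C^1_b)_{\vartheta,\infty}\to(C^0_b(H_n;H_n),C^1_b(H_n;H_n))_{\vartheta,\infty}$ gives exactly $t^{-(\frac12+\gamma+\delta)}$, i.e.~\eqref{Hstima1.5var}. As an alternative that sidesteps interpolation, one may estimate the Hölder seminorm of $A_n^\gamma D(R_{n,t}\varphi)$ directly from \eqref{D10}: transferring the increment $x-x'$ onto $\varphi$ through the shift $e^{-tA_n}$ costs $\|e^{-tA_n}\|^\vartheta\le1$, while the Gaussian weight integrates, by Cauchy--Schwarz in $\mu_{n,t}$, to $\|A_n^\gamma\Gamma_{n,t}\|_{\mathscr{L}(H_n)}\lesssim t^{-(\frac12+\gamma+\delta)}$ via \eqref{stimeproiezione}.

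The one genuinely delicate point, and the main obstacle, is that every interpolation estimate must hold with constants independent of $n$, since the whole purpose of the proposition is a uniform-in-$n$ bound. This reduces to the fact that $C^\vartheta_b(H_n;\cdot)=(C^0_b(H_n;\cdot),C^1_b(H_n;\cdot))_{\vartheta,\infty}$ holds with equivalence constants depending only on $\vartheta$ and not on $n$: this is the case because these spaces are built from sup- and difference-quotient norms with the same translation-invariant structure in every dimension, so the $K$-functional characterisation of Hölder spaces applies with universal constants. Granting this, the dimension-free bounds of Proposition~\ref{prop:OU} transfer verbatim through interpolation, and the constants $M_{\gamma,\delta,\vartheta}$ are independent of $n$, as required.
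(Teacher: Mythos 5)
Your proposal is correct and follows essentially the same route as the paper, whose entire proof consists of interpolating the endpoint bounds of Proposition~\ref{prop:OU} through the identification $C^\vartheta_b=(C^0_b,C^1_b)_{\vartheta,\infty}$ with dimension-free constants (this is exactly what the cited results in \cite{CR88, Dap-Zab2002} provide), and your exponent computations for \eqref{Hstima1var} and \eqref{Hstima2var} check out. One remark on \eqref{Hstima1.5var}: the detour through exponential decay of the semigroup for large $t$ is unnecessary, because the $C^0_b$-part of the $C^1_b\to C^1_b$ endpoint can be bounded by the \emph{first} estimate of Proposition~\ref{prop:OU} together with $\|\varphi\|_{C^0_b(H_n)}\le\|\varphi\|_{C^1_b(H_n)}$, which already carries the exponent $\frac12+\gamma+\delta$ for all $t>0$; your alternative direct argument via \eqref{D10}, transferring the increment onto $\varphi$ and applying Cauchy--Schwarz with \eqref{stimeproiezione}, is also valid and is the cleanest way to get the H\"older seminorm bound.
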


 \begin{rmk} 
    Noting that $A_n$ is bijective for every $n\in\N$, it is immediate to see that for every $n\in\N_+$
     there exists a constant $M_n>0$ such that, for all $t>0$,
     \begin{equation*}
\norm{\Gamma_{n,t}}_{\mathscr{L}(H_n)}
\leq  M_n\frac{1}{t^{\frac12}}\,.
\end{equation*}
Hence, by \cite[Thm.~6.2.2 \& Prop.~6.2.9]{Dap-Zab2002}
and standard interpolation arguments 
for every $n\in\N_+$ and $\vartheta\in (0,1)$ there exists a constant $M_{n,\vartheta}>0$ such that for every $\varphi\in C^\vartheta_b(H_n)$ and $t>0$ we have 
\begin{equation}\label{Sn}
\norm{R_{n,t}\varphi}_{C^2_b(H_n)}\leq 
M_{n,\vartheta}\left(1+\frac{1}{t^{1-\frac{\vartheta}{2}}}\right)\norm{\varphi}_{C^\vartheta_b(H_n)}\,.
\end{equation}
\end{rmk}

\subsection{The elliptic Kolmogorov equation}
We assume here \ref{H0}, \ref{H1}, \ref{H2_0}--\ref{H5_0}, as well as the following 
additional boundedness assumption.
\begin{enumerate}[start=6,label={{(H\arabic*}$_0$)}]
\item \label{H6_0}
$B\in C^\theta_b(H;D(A^{-\beta}))$ and set 
$C_B:=\max\{1,\|B\|_{C^\theta_b(H;D(A^{-\beta}))}\}$.
\end{enumerate}
The additional assumptions \ref{H5_0}--\ref{H6_0} are crucial to solve the Kolmogorov equation,
but will be suitably weakened in the general uniqueness results: see Theorems~\ref{th:1}, \ref{th:2}, and \ref{th:3}.

For every $n\in\N_+$ we define $B_n:H\to H_n$ by
\[
B_n(x):=P_nB(x):=\sum_{k=1}^n\langle B(x), e_k\rangle e_k\,, \quad x\in H\,,
\]
and we note that thanks to \ref{H6_0} one has, for every $n\in\N_+$,
\begin{align}\label{est:Bn2}
\norm{B_{n|H_n}}_{C^0_b(H_n;D(A_n^{-\beta}))}\leq \norm{B_{n|H_n}}_{C^\theta_b(H_n;D(A_n^{-\beta}))}
\leq \norm{B}_{C^\theta_b(H;D(A^{-\beta}))}&\leq C_B\,.
\end{align}
Indeed, using the fact that $P_n$ is non-expansive on $D(A^{-\beta})$ and $P_n$ and $A$ commute, 
for all $x,y\in H_n$ one has
\begin{align*}
\norm{B_n(x)}_{D(A_n^{-\beta})}&=
\norm{B_n(x)}_{-\beta}
=\sup_{\|z\|_\beta\leq1}
|\langle P_nB(x), z\rangle|
\leq\norm{B(x)}_{-\beta}
\end{align*}
and
\begin{align*}
\norm{B_n(x)-B_n(y)}_{D(A_n^{-\beta})}&=
\norm{B_n(x)-B_n(y)}_{-\beta}
=\sup_{\|z\|_\beta\leq1}
|\langle P_n(B(x)-B(y)), z\rangle|\leq\norm{B(x)-B(y)}_{-\beta}
\,.
\end{align*}

We denote now by $\bar c$ a generic positive constant satisfying the following constraints:
\begin{itemize}
\item if $\beta+\delta\in[0,\frac12)$, then
\begin{equation}
\label{c}
\bar c>\max\left\{
\frac{4C_B}{\lambda_1},
\frac1{\lambda_1}\left[
4M_{\beta,\delta,\theta}C_B 
\Gamma\left(\frac12-\beta-\delta\right)
\right]^{\frac1{\frac12-\beta-\delta}}
\right\}\,;
\end{equation}
\item if $\beta+\delta=\frac12$, then
\begin{equation}
\begin{aligned}
\label{c'}
\bar c>
    &\frac1{\lambda_1}\left[
    16M_{\beta,\delta,\theta}\Gamma(\theta(1-\beta))(C_{\tilde B}+C_B) \left( 1 + 
  \frac{2M_{\beta,\delta,\theta}(2-\theta)}{\theta(1-\theta)(1-\beta)}(C_{\tilde B}+C_B)
      \right)\right]^{\frac1{\theta(1-\beta)}}
      +\frac{8C_B}{\lambda_1}\,;
\end{aligned}
\end{equation}
\end{itemize}
where $\Gamma$ is the Eulero function.
\begin{rmk}
    Conditions \eqref{c}--\eqref{c'} on $\bar c$ will be used in order 
    to prove well-posedness of the Kolmogorov equation and deduce 
    suitable estimates on the solutions. In the proof of pathwise uniqueness only (see Subsection~\ref{ssec:path}), 
    the possible values of $\bar c$ will be further restricted. 
\end{rmk}

Let $n\in\N_+$, $k\in\{1,\ldots,n\}$, and $g\in C^\theta_b(H)$. We consider the finite-dimensional Kolmogorov equation 
\begin{equation}
    \label{eq:kolm}
    \bar c\lambda_k u_{n,k}(x)
    +L_n u_{n,k}(x)
    =\langle B_n(x), Du_{n,k}(x)\rangle +g(x)\,, \quad x\in H_n\,,
\end{equation}
where
\[
L_n\varphi(x):=
-\frac{1}{2}\Tr\left[A_n^{-2\delta}D^2\varphi(x)\right]
+\scal{A_nx}{D\varphi(x)}\,, \quad x\in H_n\,, \quad \varphi\in C^2_b(H_n)\,.
\]
Let us note that $g_{|H_n}\in C^\theta_b(H_n)$ and 
\begin{equation}
\label{est:g}
\|g_{|H_n}\|_{C^\theta_b(H_n)}\leq\|g\|_{C^\theta_b(H)}\,.
\end{equation}

\begin{proposition}
\label{prop:kolm}
Assume \ref{H0}, \ref{H1}, \ref{H2_0}--\ref{H6_0}
and let $n\in\N_+$, $k\in\{1,\ldots,n\}$ and $g\in C^\theta_b(H)$.
Then, there exists a unique $u_{n,k}\in C^{1+\theta}_b(H_n)$ such that 
\begin{equation}
\label{eq:kolm_mild}
u_{n,k}(x)=
\int_0^{+\infty} 
e^{-\bar c\lambda_k t}
R_{n,t}\left[\scal{B_n}{Du_{n,k}} + g_{|H_n}\right](x)\,\d t
\qquad\forall\,x\in H_n\,.
\end{equation}
Moreover, it holds that $u_{n,k}\in C^2_b(H_n)$ and satisfies the Kolmogorov equation \eqref{eq:kolm}.
\end{proposition}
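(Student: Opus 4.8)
The plan is to solve the mild equation \eqref{eq:kolm_mild} by a contraction argument in a space adapted to the duality pairing appearing in the drift, and then to bootstrap regularity through the smoothing of the Ornstein--Uhlenbeck semigroup. For $n$ and $k$ fixed, introduce the affine map
\[
\mathcal{T}(v)(x):=\int_0^{+\infty}e^{-\bar c\lambda_k t}R_{n,t}\bigl[\scal{B_n}{Dv}+g_{|H_n}\bigr](x)\,\d t\,,\qquad x\in H_n\,,
\]
so that $u_{n,k}$ is exactly a fixed point of $\mathcal{T}$. Since $B_n$ takes values in $D(A_n^{-\beta})$, the pairing $\scal{B_n(x)}{Dv(x)}$ is the duality between $D(A_n^{-\beta})$ and $D(A_n^{\beta})$, and \eqref{est:Bn2} gives $\scal{B_n}{Dv}+g_{|H_n}\in C^\theta_b(H_n)$ with
\[
\norm{\scal{B_n}{Dv}+g_{|H_n}}_{C^\theta_b(H_n)}\le C_B\norm{A_n^\beta Dv}_{C^\theta_b(H_n;H_n)}+\norm{g}_{C^\theta_b(H)}\,.
\]
The natural quantity to control is therefore $A_n^\beta Dv$, and I would run the contraction in the Banach space $\{v\in C^1_b(H_n):A_n^\beta Dv\in C^\theta_b(H_n;H_n)\}$ endowed with $\norm{v}_{C^0_b(H_n)}+\norm{A_n^\beta Dv}_{C^\theta_b(H_n;H_n)}$. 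As $A_n$ is invertible on $H_n$, this norm is equivalent to the $C^{1+\theta}_b(H_n)$ one (so uniqueness will hold in $C^{1+\theta}_b(H_n)$), but the decisive point for later use is that the contraction constants be \emph{uniform in $n$ and $k$}.

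In the sub-critical regime $\beta+\delta<\frac12$ the estimates close directly. Writing $A_n^\beta D(\mathcal{T}(v_1)-\mathcal{T}(v_2))$ under the integral and applying \eqref{Hstima1.5var} with $\gamma=\beta$ to $\scal{B_n}{D(v_1-v_2)}$ yields
\[
\norm{A_n^\beta D(\mathcal{T}(v_1)-\mathcal{T}(v_2))}_{C^\theta_b}\le M_{\beta,\delta,\theta}\,C_B\norm{A_n^\beta D(v_1-v_2)}_{C^\theta_b}\int_0^{+\infty}e^{-\bar c\lambda_k t}t^{-\frac12-\beta-\delta}\,\d t\,,
\]
and the time integral equals $(\bar c\lambda_k)^{-\frac12+\beta+\delta}\Gamma(\frac12-\beta-\delta)$, finite precisely because $\beta+\delta<\frac12$. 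Since $\lambda_k\ge\lambda_1$ and the exponent is negative, the second constraint in \eqref{c} forces this factor below $\tfrac14$, while the first constraint $\bar c>4C_B/\lambda_1$, together with $\norm{R_{n,t}}\le1$, controls the $C^0_b$ component. Hence $\mathcal{T}$ is a contraction and the Banach fixed point theorem provides the unique $u_{n,k}$.

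The critical regime $\beta+\delta=\frac12$ is the main obstacle: now the exponent in \eqref{Hstima1.5var} is $-1$ and the time integral diverges at $t=0$, so the direct contraction breaks down. Here I would follow the interpolation strategy of \cite{Pri2021,Pri2021c}. Splitting the $\mathcal{T}$-integral at $t=1$, the tail $\int_1^{+\infty}$ is harmless and is absorbed by taking $\bar c$ large; the singular part $\int_0^1$ is treated by estimating the H\"older increment of $A_n^\beta DR_{n,t}(\cdot)$ via a further splitting at $t_0\simeq\norm{x-y}^{1/(1-\beta)}$, using the $C^0$ gradient bound \eqref{Hstima1var} for $t<t_0$ and the second-order bound \eqref{Hstima2var} for $t>t_0$; a direct computation shows both pieces scale like $\norm{x-y}^\theta$ exactly in this critical balance. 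The crucial subtlety is that the self-referential contribution to the seminorm of $A_n^\beta D\mathcal{T}(v)$ carries the $C^0$-type constant $\sup_x\norm{B(x)}_{-\beta}$, rather than the full H\"older norm of $B$; subtracting the constant $z_0$ replaces this factor by the oscillation $C_{\tilde B}$, and assumption \ref{H5_0} makes it small (it keeps $\tfrac{2M_{\beta,\delta,\theta}(2-\theta)}{\theta(1-\theta)(1-\beta)}C_{\tilde B}$ below $\tfrac12$). The constant part generated by $z_0$ and the $\Gamma(\theta(1-\beta))$ factor coming from the $t<t_0$ integral are then dominated by choosing $\bar c$ as in \eqref{c'}, so that the contraction closes.

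Finally, once the fixed point $u_{n,k}\in C^{1+\theta}_b(H_n)$ is found, its drift $\scal{B_n}{Du_{n,k}}+g_{|H_n}$ lies in $C^\theta_b(H_n)$, so \eqref{Sn} (equivalently Proposition~\ref{prop:OU}) bounds $\norm{R_{n,t}[\scal{B_n}{Du_{n,k}}+g_{|H_n}]}_{C^2_b(H_n)}$ by $M_{n,\theta}(1+t^{-1+\theta/2})\norm{\scal{B_n}{Du_{n,k}}+g_{|H_n}}_{C^\theta_b(H_n)}$, whose integral against $e^{-\bar c\lambda_k t}$ converges because $-1+\theta/2>-1$; thus $u_{n,k}\in C^2_b(H_n)$. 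The passage from \eqref{eq:kolm_mild} to the differential form \eqref{eq:kolm} is then standard: \eqref{eq:kolm_mild} exhibits $u_{n,k}$ as the resolvent $(\bar c\lambda_k+L_n)^{-1}$ applied to its own drift, where $-L_n$ generates $R_{n,t}$; since $u_{n,k}\in C^2_b(H_n)$ lies in the domain of the generator and $L_n u_{n,k}$ coincides with the classical Ornstein--Uhlenbeck expression, applying $\bar c\lambda_k+L_n$ to \eqref{eq:kolm_mild} yields \eqref{eq:kolm}.
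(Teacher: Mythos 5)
Your architecture is the paper's own: a fixed point for the mild equation in the space $C^{1+\theta}_b(H_n)$ equipped with the equivalent norm $\norm{v}_{C^0_b(H_n)}+\norm{A_n^\beta Dv}_{C^\theta_b(H_n;H_n)}$ (the paper calls the map $\mathcal V_{n,k}$), with constants uniform in $n$ and $k$. Your sub-critical computation is correct and matches the paper line by line: estimate \eqref{Hstima1.5var} with $\gamma=\beta$, the factor $(\bar c\lambda_k)^{-(\frac12-\beta-\delta)}\Gamma\left(\frac12-\beta-\delta\right)$, the two constraints in \eqref{c} keeping each contraction coefficient below $\frac14$. The final regularity step ($u_{n,k}\in C^2_b(H_n)$ from \eqref{Sn}, then the resolvent identity turning \eqref{eq:kolm_mild} into \eqref{eq:kolm}) is also fine and is exactly how the paper concludes.

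The gap is in the critical case $\beta+\delta=\frac12$, at the very point you flag as ``the crucial subtlety''. You correctly note that in the interpolated H\"older-seminorm estimate the dangerous coefficient multiplying $[A_n^\beta D(v_1-v_2)]_{C^\theta_b}$ is the \emph{sup} norm of the drift, and that replacing $B$ by $\tilde B=B-z_0$ turns it into the oscillation $C_{\tilde B}$, which \ref{H5_0} makes small. But you never say where the discarded term $\scal{z_0}{Dv}$ goes. If you keep it as part of the source of the fixed-point map with the \emph{original} semigroup $R_{n,t}$, then feeding $\phi_0:=\scal{P_nz_0}{D(v_1-v_2)}$ through the same splitting at $t_0\simeq\norm{h}^{1/(1-\beta)}$ produces a contribution of order $\frac{M_{\beta,\delta,\theta}(2-\theta)}{\theta(1-\theta)(1-\beta)}\,\norm{z_0}_{-\beta}\,[A_n^\beta D(v_1-v_2)]_{C^\theta_b}$, because $[\phi_0]_{C^\theta_b}\le\norm{z_0}_{-\beta}[A_n^\beta D(v_1-v_2)]_{C^\theta_b}$. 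This coefficient carries \emph{no} negative power of $\bar c$ (the singular part of the time integral sits near $t=0$, where $e^{-\bar c\lambda_k t}\approx 1$), and $\norm{z_0}_{-\beta}$ is not small — only the oscillation of $B$ is controlled by \ref{H5_0}. So the contraction does not close as sketched. The paper's fix is structural, not merely notational: the pairing $\scal{z_0}{Du}$ is absorbed into the linear part, i.e.\ one works with the drifted Ornstein--Uhlenbeck operator $L_n^{z_0}$ (drift $A_nx-z_0$) and its semigroup $R^{z_0}_{n,t}$, which by \cite{CerLun2021} satisfies the same gradient estimates \eqref{V_aux3}--\eqref{V_aux4} with constants independent of $n$; the entire fixed point is then run for the source $\scal{\tilde B_n}{Dv}+g_{|H_n}$, so that only $C_{\tilde B}$ ever multiplies the H\"older seminorm. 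Your proposal needs this step (or an equivalent device) to become a proof of the critical case.
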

\begin{proof}
We use a fixed point argument on the operator $\mathcal V_{n,k}: C^{1+\theta}_b(H_n)\to C^{2}_b(H_n)$
defined as
\[
\mathcal V_{n,k} \varphi(x):=
\int_0^{+\infty}e^{-\bar c\lambda_k t} 
R_{n,t}\left[\scal{B_n}{D\varphi} + g_{|H_n}\right](x)\,\d t\,,
\quad x\in H_n\,, \quad \varphi\in C^{1+\theta}_b(H_n)\,.
\]
Note that the operator $\mathcal V_{n,k}$ is well-defined: indeed, for every $\varphi\in C^{1+\theta}_b(H_n)$,
from the estimates \eqref{Sn}, \eqref{est:Bn2}
and \eqref{est:g}, and recalling that $A_n^\beta\in\cL(H_n)$, we have
\begin{align*}
\norm{\mathcal V_{n,k} \varphi}_{C^2_b(H_n)}&\leq 
M_{n,\theta}
\left(C_B\norm{A_n^\beta D\varphi}_{C^\theta_b(H_n;H_n)}
+\|g\|_{C^\theta_b(H)}\right)
\int_0^{+\infty} e^{-\bar c\lambda_k t}
\left(1+\frac{1}{t^{1-\frac{\theta}{2}}}\right)\,\d t
<+\infty\,.  
\end{align*}
Let us show now that the operator $\mathcal V_{n,k}$ is a contraction on $C^{1+\theta}_b(H_n)$. To this end, let $\varphi,\psi\in C^{1+\theta}_b(H_n)$. We preliminary note that by proceeding as above and noting that the forcing terms $g_{|H_n}$ compensate when taking the difference, from \eqref{est:Bn2} one has
\begin{align}
\nonumber
\norm{\mathcal V_{n,k}\varphi - \mathcal V_{n,k}\psi}_{C^0_b(H_n)}
&\leq \int_0^{\infty}e^{-\bar c\lambda_k t}
\norm{B_n}_{C^0_b(H_n; D(A_n^{-\beta}))}
\norm{D(\varphi-\psi)}_{C^0_b(H_n; D(A_n^{\beta}))}\,\d t  \\
\nonumber
&\leq \frac1{\bar c\lambda_k}C_B
\norm{A_n^\beta D(\varphi-\psi)}_{C^0_b(H_n;H_n)}\\
\label{V_aux1}
&\leq \frac1{\bar c\lambda_1}C_B
\norm{A_n^\beta D(\varphi-\psi)}_{C^\theta_b(H_n;H_n)}\,,
\end{align}
where in the last inequality we have used \ref{H1}, specifically $\lambda_k>\lambda_1>0$
for every $k>1$.
Now, we divide the proof into the two different cases $\beta+\delta\in[0,\frac12)$ and $\beta+\delta=\frac12$.\\
{\textbf Case \boldsymbol{$\beta+\delta\in[0,\frac12)$}.}
By using \eqref{Hstima1.5var} with $\gamma=\beta$ one gets via similar computations as above that 
\begin{align}
&\norm{A_n^\beta D(\mathcal V_{n,k}\varphi - \mathcal V_{n,k}\psi)}_{C^\theta_b(H_n; H_n)}
\leq M_{\beta,\delta,\theta}
\int_0^{+\infty}e^{-\bar c\lambda_k t}
\frac1{t^{\frac12+\beta+\delta}}
\norm{\scal{B_n}{D(\varphi-\psi)}}_{C^\theta_b(H_n)}\,\d t\notag  \\
&\qquad\leq \frac1{(\bar c\lambda_k)^{\frac12-\beta-\delta}}M_{\beta,\delta,\theta}
\Gamma\left(\frac12-\beta-\delta\right)
C_B
\norm{D(\varphi-\psi)}_{C^\theta_b(H_n;D(A_n^\beta))}\notag
\\
&\qquad\leq \frac1{\bar c^{\frac12-\beta-\delta}}
\left[\frac{M_{\beta,\delta,\theta}C_B}{\lambda_1^{\frac12-\beta-\delta}}
\Gamma\left(\frac12-\beta-\delta\right)\right]
\norm{A_n^\beta D(\varphi-\psi)}_{C^\theta_b(H_n;H_n)}\,\label{V_aux0}.
\end{align}
By condition \eqref{c}, \eqref{V_aux1} and \eqref{V_aux0} we infer that 
\begin{align*}
&\norm{\mathcal V_{n,k}\varphi - \mathcal V_{n,k}\psi}_{C^0_b(H_n)}
+\norm{A_n^\beta D(\mathcal V_{n,k}\varphi - \mathcal V_{n,k}\psi)}_{C^\theta_b(H_n; H_n)}\\
&\qquad\leq 
\left[\frac1{\bar c}\frac{C_B}{\lambda_1}+
\frac{1}{\bar c^{\frac12-\beta-\delta}}
\frac{M_{\beta,\delta,\theta}}{\lambda_1^{\frac12-\beta-\delta}}
\Gamma\left(\frac12-\beta-\delta\right)
\right]
\norm{A_n^\beta D(\varphi-\psi)}_{C^\theta_b(H_n;H_n)}
\\
&\qquad\leq\frac12\left[
\norm{\varphi - \psi}_{C^0_b(H_n)}+
\norm{A_n^\beta D(\varphi-\psi)}_{C^\theta_b(H_n;H_n)}
\right]\,,
\end{align*}
so that $\mathcal V_{n,k}$ is a contraction on $C_b^{1+\theta}(H_n)$ with respect to the equivalent norm
\[
\varphi\mapsto \norm{\varphi}_{C^0_b(H_n)} + 
\norm{A_n^\beta D\varphi}_{C^\theta_b(H_n; H_n)}\,.
\]
{\textbf Case \boldsymbol{$\beta+\delta=\frac12$}.}
Let us define $\tilde B:H\to D(A^{-\beta})$ as
$\tilde B(x):=B(x)-z_0$, $x\in H$, where $z_0$ is given by assumption \ref{H5_0}. Analogously, for every $n\in\N_+$ we define $\tilde B_n:H\to H_n$ as $\tilde B_n(x):=P_n\tilde B(x)=B_n(x)-P_n z_0$, 
$x\in H$. We note that \ref{H5_0} guarantees that, for all $n\in\N_+$,
\begin{align}
\label{est_Bn_tilde1}
\|\tilde B_n\|_{C^0_b(H;D(A_{n}^{-\beta}))}\leq
\|\tilde B\|_{C^0_b(H;D(A^{-\beta}))}&=C_{\tilde B}\,,\\
\label{est_Bn_tilde2}
\|\tilde B_n\|_{C^\theta_b(H;D(A_{n}^{-\beta}))}\leq
\|\tilde B\|_{C^\theta_b(H;D(A^{-\beta}))}&\leq
C_{\tilde B}+ C_B\,.
\end{align}
Now, note that one can formally rewrite the Kolmogorov equation 
\eqref{eq:kolm} as 
\[
\bar c\lambda_k u_{n,k}(x)+L_n^{z_0} u_{n,k}(x)
=\langle \tilde B_n(x), Du_{n,k}(x)\rangle +g(x)\,, \quad x\in H_n\,,
\]
where
\[
  L_n^{z_0}\varphi(x):=
  -\frac{1}{2}\Tr\left[A_n^{-2\delta}D^2\varphi(x)\right]
  +\scal{A_nx-z_0}{D\varphi(x)}\,, \quad x\in H_n\,, \quad \varphi\in C^2(H_n)\,.
\]
Hence, we can equivalently represent the operator $\mathcal V_{n,k}$
as
\begin{equation}\label{V_aux2}
  \mathcal V_{n,k} \varphi(x):=
  \int_0^{+\infty}e^{-\bar c\lambda_k t} 
  R^{z_0}_{n,t}\left[\langle\tilde B_n,D\varphi\rangle
  + g_{|H_n}\right](x)\,\d t\,,
  \quad x\in H_n\,, \quad \varphi\in C^{1+\theta}_b(H_n)\,,
\end{equation}
with the semigroup $R_{n,t}^{z_0}$ being defined as
$R_{n,0}^{z_0}\varphi=\varphi$ and 
\[
(R^{z_0}_{n,t}\varphi)(x):=
\int_{H_n}\varphi(e^{-tA_n}x+\zeta_t+y)\,\mu_{n,t}(\d y)
\,,\quad x\in H\,, \quad t>0\,,
\quad \varphi\in C^0_b(H_n)\,,
\]
where
\[
\zeta_t:=\int_0^te^{-(t-s)A_n}z_0\, \d s, \qquad t>0\,.
\]
It is possible to prove that $R_{n,t}^{z_0}$ verifies the estimates \eqref{Hstima1var}--\eqref{Hstima2var} (see \cite{CerLun2021})
by possibly updating the implicit constants with dependence on $z_0$ but not on $n$. In particular,
for every $n\in\N_+$, $t>0$ and $\varphi\in C_b^\theta(H_n)$, it holds that (recall that $\beta+\delta=\frac12$)
\begin{align}
\label{V_aux3}
&\|A_n^\beta D(R^{z_0}_{n,t}\varphi)\|_{C^0_b(H_n;H_n)}
\leq M_{\beta,\delta,\theta}\frac{1}{t^{(1-\theta)(\frac12+\beta+\delta)+\theta\beta}}
\|\varphi\|_{C^\theta_b(H_n)}
= M_{\beta,\delta,\theta}\frac{1}{t^{1-\theta(1-\beta)}}
\|\varphi\|_{C^\theta_b(H_n)}\,,\\
\label{V_aux4}
&\|D(A_n^{\beta}D(R_{n,t}^{z_0}\varphi))
\|_{C^0_b(H_n;\mathscr{L}(H_n))}
\leq M_{\beta,\delta,\theta}
\frac{1}{t^{1-\frac{\theta}{2}+\beta+\delta(2-\theta)}}\|\varphi\|_{C^\theta_b(H_n)}\,.
\end{align}
Note that $1-\theta(1-\beta)<1$
and $1-\frac{\theta}{2}+\beta+\delta(2-\theta)>1$
as a consequence of assumption \ref{H2_0}.

By  \eqref{est_Bn_tilde1}, \eqref{est_Bn_tilde2}, \eqref{V_aux2} and \eqref{V_aux3} we get
\begin{align}
&\norm{A_n^\beta D(\mathcal V_{n,k}\varphi - \mathcal V_{n,k}\psi)}_{C^0_b(H_n; H_n)} \notag \\
&\leq M_{\beta,\delta,\theta}
\norm{\langle\tilde B_n,D(\varphi-\psi)\rangle}_{C^\theta_b(H_n)}
\int_0^{+\infty}e^{-\bar c\lambda_k t}
\frac{1}{t^{1-\theta(1-\beta)}}\,\d t \notag \\
&\leq \frac{M_{\beta,\delta,\theta}\Gamma(\theta(1-\beta))}
{(\bar c\lambda_k)^{\theta(1-\beta)}}
\left[\|\tilde B_n\|_{C^0_b(H_n; D(A_{n}^{-\beta}))}
\norm{D(\varphi-\psi)}_{C^\theta_b(H_n; D(A_{n}^\beta))}\right. \notag \\
&\qquad\left.+
\|\tilde B_n\|_{C^\theta_b(H_n; D(A_{n}^{-\beta}))}
\norm{D(\varphi-\psi)}_{C^0_b(H_n; D(A_{n}^\beta))}
\right] \notag \\
&\leq \frac{M_{\beta,\delta,\theta}\Gamma(\theta(1-\beta))}
{(\bar c\lambda_1)^{\theta(1-\beta)}}
\left[C_{\tilde B}\norm{A_n^\beta D(\varphi-\psi)}_{C^\theta_b(H_n; H_n)}\right. \notag \\
&\qquad\left.+
(C_{\tilde B}+C_B)
\norm{A_n^\beta D(\varphi-\psi)}_{C^0_b(H_n; H_n)}
\right] \notag \\
&\leq\frac1{\bar c^{\theta(1-\beta)}}
\left[\frac{2M_{\beta,\delta,\theta}
\Gamma(\theta(1-\beta))(C_{\tilde B}+C_B)}{\lambda_1^{\theta(1-\beta)}}
\right]\norm{A_n^\beta D(\varphi-\psi)}_{C^\theta_b(H_n; H_n)}\,.
\label{normaCb}
\end{align}
Let us estimate now the $\theta$-H\"older seminorm of $A_n^\beta D(\mathcal V_{n,k}\varphi-\mathcal V_{n,k}\psi)$: to this end, we separately consider the two cases $\norm{h}_{H_n}\geq 1$ and $\|h\|_{H_n}\leq 1$ with $h\neq0$.\\
The case $\norm{h}_{H_n}\geq 1$ is a direct consequence of \eqref{V_aux3}: for all $x,h\in H_n$ with $\norm{h}_{H_n}\geq 1$, by \eqref{V_aux3} we have
\begin{align*}
\nonumber
&\|A_n^\beta D(\mathcal V_{n,k}\varphi-\mathcal V_{n,k}\psi)(x+h)
-A_n^\beta D(\mathcal V_{n,k}\varphi-\mathcal V_{n,k}\psi)(x)\|_{H_n}\\
\nonumber
&\leq\int_0^{+\infty}e^{-\bar c\lambda_k t}
\norm{A_n^\beta D R_{n,t}^{z_0}
\left[\langle\tilde B_n,D(\varphi-\psi)\rangle \right](x+h)-  A_n^\beta D R_{n,t}^{z_0}
\left[\langle\tilde B_n,D(\varphi-\psi)\rangle \right](x)}_{H_n}\,\d t\\
\nonumber
&\leq 2 M_{\beta,\delta,\theta}
\norm{\langle\tilde B_n,D(\varphi-\psi)\rangle}_{C^\theta_b(H_n)}
\int_0^{+\infty}e^{-\bar c\lambda_k t}
\frac{1}{t^{1-\theta(1-\beta)}}\,\d t\\
&\leq \frac{2M_{\beta,\delta,\theta}\Gamma(\theta(1-\beta))}
{(\bar c\lambda_k)^{\theta(1-\beta)}}
\|\langle \tilde B_n, D(\varphi-\psi)\rangle\|_{C^\theta_b(H_n)} \\
&\leq \frac{2M_{\beta,\delta,\theta}\Gamma(\theta(1-\beta)) (C_{\tilde B}+C_B)}
{(\bar c\lambda_1)^{\theta(1-\beta)}}
\|\langle \tilde B_n, D(\varphi-\psi)\rangle\|_{C^\theta_b(H_n)}
\,.
\end{align*}
Since $\norm{h}_{H_n}\geq 1$, we infer that
\begin{align}
\nonumber
&\|A_n^\beta D(\mathcal V_{n,k}\varphi-\mathcal V_{n,k}\psi)(x+h)
-A_n^\beta D(\mathcal V_{n,k}\varphi-\mathcal V_{n,k}\psi)(x)\|_{H_n}\\
&\leq \frac{2M_{\beta,\delta,\theta}\Gamma(\theta(1-\beta)) (C_{\tilde B}+C_B)}
{(\bar c\lambda_1)^{\theta(1-\beta)}}
\|\langle \tilde B_n, D(\varphi-\psi)\rangle\|_{C^\theta_b(H_n)}
\label{normCt>1}.
\end{align}
We now consider the case $h\neq0$ and $\|h\|_{H_n}\leq 1$. We set $\varsigma:=\frac1{1-\beta}$.
For all $x,h\in H_n$ with $h\neq0$ and $\|h\|_{H_n}\leq 1$,
we have, thanks to \eqref{V_aux3}--\eqref{V_aux4},
\begin{align}
\nonumber
&\|A_n^\beta D(\mathcal V_{n,k}\varphi-\mathcal V_{n,k}\psi)(x+h)
-A_n^\beta D(\mathcal V_{n,k}\varphi-\mathcal V_{n,k}\psi)(x)\|_{H_n}\\
\nonumber
&\leq\int_0^{+\infty}e^{-\bar c\lambda_k t}
\norm{A_n^\beta D R_{n,t}^{z_0}
\left[\langle\tilde B_n,D(\varphi-\psi)\rangle \right](x+h)-  A_n^\beta D R_{n,t}^{z_0}
\left[\langle\tilde B_n,D(\varphi-\psi)\rangle \right](x)}_{H_n}\,\d t\\
\nonumber
&=\int_{0}^{\|h\|_{H_n}^{\varsigma}}
e^{-\bar c\lambda_k t}
\norm{A_n^\beta
D R_{n,t}^{z_0}
\left[\langle\tilde B_n,D(\varphi-\psi)\rangle \right](x+h)-A_n^\beta D R_{n,t}^{z_0}
\left[\langle\tilde B_n,D(\varphi-\psi)\rangle \right](x)}_{H_n}\,\d t\\
\nonumber
&\quad+\int_{\|h\|_{H_n}^{\varsigma}}^{+\infty}
e^{-\bar c\lambda_k t}\norm{A_n^\beta
D R_{n,t}^{z_0}
\left[\langle\tilde B_n,D(\varphi-\psi)\rangle \right](x+h)-A_n^\beta D R_{n,t}^{z_0}
\left[\langle\tilde B_n,D(\varphi-\psi)\rangle \right](x)}_{H_n}\,\d t\\
\nonumber
&\leq M_{\beta,\delta,\theta}
\|\langle \tilde B_n, D(\varphi-\psi)\rangle\|_{C^\theta_b(H_n)}
\left[2\int_{0}^{\|h\|_{H_n}^{\varsigma}}
\frac{1}{t^{1-\theta(1-\beta)}}\,\d t
+\|h\|_{H_n}\int_{\|h\|_{H_n}^{\varsigma}}^{+\infty}
\frac{1}{t^{1-\frac{\vartheta}{2}+\beta+\delta(2-\vartheta)}}\,\d t
\right]\\
\label{exp:h}
&=M_{\beta,\delta,\theta}
\|\langle \tilde B_n, D(\varphi-\psi)\rangle\|_{C^\theta_b(H_n)}
\left[\frac{2\|h\|_{H_n}^\theta}
{\theta(1-\beta)}
+\frac{\|h\|_{H_n}^{1-\frac{\beta+\delta(2-\theta)-\frac\theta2}{1-\beta}}}
{\beta+\delta(2-\theta)-\frac\theta2}
\right]\,.
\end{align}
Now, by taking into account that $\beta+\delta=\frac12$,
one has $1-\frac{\beta+\delta(2-\theta)-\frac\theta2}{1-\beta}=\theta$, so that 
\begin{align}
\nonumber
&\|A_n^\beta D(\mathcal V_{n,k}\varphi-\mathcal V_{n,k}\psi)(x+h)
-A_n^\beta D(\mathcal V_{n,k}\varphi-\mathcal V_{n,k}\psi)(x)\|_{H_n}\\
\label{normCt<1}
&\leq M_{\beta,\delta,\theta}
\|\langle \tilde B_n, D(\varphi-\psi)\rangle\|_{C^\theta_b(H_n)}
\left[\frac{2}
{\theta(1-\beta)}
+\frac{1}
{\beta+\delta(2-\theta)-\frac\theta2}
\right]\|h\|_{H_n}^\theta\,.
\end{align}
Taking into account \eqref{normCt>1} and \eqref{normCt<1}, and noting that by \eqref{c'} we can assume $\overline{c}$ arbitrarily large in  \eqref{normCt>1}, we obtain
\begin{align}
\nonumber
&[A_n^\beta D(\mathcal V_{n,k}\varphi-
\mathcal V_{n,k}\psi)]_{C^\theta_b(H_n;H_n)}\\
\nonumber
&\leq M_{\beta,\delta,\theta}
\frac{2-\theta}{\theta(1-\theta)(1-\beta)}
\|\langle \tilde B_n, D(\varphi-\psi)\rangle\|_{C^\theta_b(H_n)}\\
\label{normaCt}
&\leq
\frac{M_{\beta,\delta,\theta}(2-\theta)}{\theta(1-\theta)(1-\beta)}
\left[C_{\tilde B}\norm{A_n^\beta D(\varphi-\psi)}_{C^\theta_b(H_n; H_n)}+
(C_{\tilde B}+C_B)\norm{A_n^\beta D(\varphi-\psi)}_{C^0_b(H_n; H_n)}\right]\,.
\end{align}
By multiplying \eqref{normaCb} by the constant 
$1+\frac{2M_{\beta,\delta,\theta}(2-\theta)}{\theta(1-\theta)(1-\beta)}(C_{\tilde B}+C_B)$, by summing up with \eqref{normaCt} and
by defining the constants
\begin{alignat*}{2}
&L_{\beta,\delta,\theta}
:=\frac{2M_{\beta,\delta,\theta}(2-\theta)}{\theta(1-\theta)(1-\beta)}(C_{\tilde B}+C_B)\,,
\qquad
&&L'_{\beta,\delta,\theta}
:=\frac{2M_{\beta,\delta,\theta}}{\lambda_1^{\theta(1-\beta)}}
\Gamma(\theta(1-\beta))(C_{\tilde B}+C_B)(1+L_{\beta,\delta,\theta})\,,\\
&L''_{\beta,\delta,\theta}:=
\frac{M_{\beta,\delta,\theta}(2-\theta)}{\theta(1-\theta)(1-\beta)}\,,
\qquad
&&L'''_{\beta,\delta,\theta}:=
\frac{C_B}{\lambda_1}\,,
\end{alignat*}
we obtain 
\begin{align*}
&L_{\beta,\delta,\theta}
\norm{A_n^\beta D(\mathcal V_{n,k}\varphi - \mathcal V_{n,k}\psi)}_{C^0_b(H_n; H_n)}
+\|A_n^\beta D(\mathcal V_{n,k}\varphi-
\mathcal V_{n,k}\psi)\|_{C^\theta_b(H_n;H_n)}\\
&\leq
\frac{L_{\beta,\delta,\theta}'}{\bar c^{\theta(1-\beta)}}
\|A^\beta_n D(\varphi-\psi)\|_{C^\theta_b(H_n;H_n)}
+C_{\tilde B}L_{\beta,\delta,\theta}''
\|A^\beta_n D(\varphi-\psi)\|_{C^\theta_b(H_n;H_n)}
\\
&+\frac{L_{\beta,\delta,\theta}}{2}
\norm{A_n^\beta D(\varphi-\psi)}_{C^0_b(H_n; H_n)}\,.
\end{align*}
Together with \eqref{V_aux1}, this implies
that 
\begin{align*}
&\norm{\mathcal V_{n,k}\varphi - \mathcal V_{n,k}\psi}_{C^0_b(H_n)}+L_{\beta,\delta,\theta}
\norm{A_n^\beta D(\mathcal V_{n,k}\varphi - \mathcal V_{n,k}\psi)}_{C^0_b(H_n; H_n)}
+\|A_n^\beta D(\mathcal V_{n,k}\varphi-
\mathcal V_{n,k}\psi)\|_{C^\theta_b(H_n;H_n)}\\
&\leq \frac{L_{\beta,\delta,\theta}}{2}\norm{A_n^\beta D(\varphi-\psi)}_{C^0_b(H_n; H_n)}+
\left(\frac{L'''_{\beta,\delta,\theta}}{\bar c}+
\frac{L'_{\beta,\delta,\theta}}{\bar c^{\theta(1-\beta)}}+C_{\tilde B}L''_{\beta,\delta,\theta}
\right)\norm{A_n^\beta D(\varphi-\psi)}_{C^\theta_b(H_n;H_n)}\,.
\end{align*}
By recalling assumption \ref{H5_0} on $C_{\tilde B}$
and the condition \eqref{c'}, we infer that 
  \begin{align*}
    &\norm{\mathcal V_{n,k}\varphi - \mathcal V_{n,k}\psi}_{C^0_b(H_n)}
    +L_{\beta,\delta,\theta}
    \norm{A_n^\beta D(\mathcal V_{n,k}\varphi - \mathcal V_{n,k}\psi)}_{C^0_b(H_n; H_n)}
    +\|A_n^\beta D(\mathcal V_{n,k}\varphi-
    \mathcal V_{n,k}\psi)\|_{C^\theta_b(H_n;H_n)}\\
    &\leq \frac12
    \left[\|\varphi-\psi\|_{C^0_b(H_n)}+
    L_{\beta,\delta,\theta}
    \norm{A_n^\beta D(\varphi-\psi)}_{C^0_b(H_n; H_n)}+
    \norm{A_n^\beta D(\varphi-\psi)}_{C^\theta_b(H_n;H_n)}
    \right]\,,
\end{align*}
so that $\mathcal V_{n,k}$ is a contraction on $C^{1+\theta}_b(H_n)$ with respect to the (equivalent) norm
\[
\psi\mapsto 
\|\psi\|_{C^0_b(H_n)}+
    L_{\beta,\delta,\theta}
    \norm{A_n^\beta D\psi}_{C^0_b(H_n; H_n)}+
    \norm{A_n^\beta D\psi}_{C^\theta_b(H_n;H_n)}\,.
\]

To summarize, we have shown 
that, in both cases $\beta+\delta\in[0,\frac12)$
and $\beta+\delta=\frac12$,
there exists a unique $u_{n,k}\in C^{1+\theta}_b(H_n)$ solving \eqref{eq:kolm_mild}. Moreover, since $\mathcal V_{n,k}: C^{1+\theta}_b(H_n)\to C^{2}_b(H_n)$ it also holds that  $u_{n,k}\in C^2_b(H_n)$.
This concludes the proof.
\end{proof}

\begin{rmk}
Let us point out that for the fixed-point method in $C^{1+\theta}_b$ proposed in the proof of Proposition~\ref{prop:kolm} the regime $\beta+\delta=\frac12$ turns out to be critical. 
Indeed, the technique exploits the Lipschitz-continuity of $A_n^\beta
D R_{n,t}^{z_0}$, which 
crucially relies in turn on the inequality $1-\frac{\theta(\beta+\delta(2-\theta)-\frac\theta2)}{1-\beta-(1-\theta)(\frac12+\delta)}\geq\theta$ for the last exponent appearing in \eqref{exp:h} $($notice that $\theta(1-\beta)=1-\beta-(1-\theta)(\frac12+\delta)$ if $\beta+\delta=\frac12)$: this condition is guaranteed if and only if $\beta+\delta\leq\frac12$.  We stress, however, that this notion of criticality seems to be related to the specific fixed-point method employed. Investigation whether such criticality is instead intrinsic in the equation is currently carried out.
\end{rmk}

\begin{rmk}
We point out that the estimates in Proposition~\ref{stimeHolder}
are uniform with respect to $n$ and also hold for the infinite-dimensional Ornstein-Uhlenbeck semigroup $R$ defined in \eqref{OU}.
As a consequence, the same computations performed in the proof of Proposition~\ref{prop:kolm} yield, for every $g\in C^\theta_b(H)$ and every $k\in\N_+$, also the existence and uniqueness of a solution $u_k\in C^{1+\theta}_b(H)$ with 
$Du_k\in C^\theta_b(H; D(A^\beta))$ to the mild infinite-dimensional Kolmogorov equation
\begin{equation}\label{eq-infinita}
u_k(x):=\int_0^{\infty}e^{-\bar c\lambda_k t}
R_t\left[\scal{B}{Du_k}+g\right](x)\,\d t, \qquad x\in H\,.
\end{equation}
We stress, however, that the arguments 
for the $C^2$-regularity are not uniform in $n$, 
and one cannot expect in general that $u_k$ is also 
a classical solution to the Kolmogorov equation.
Indeed, in the infinite-dimensional case, 
additional structural assumptions are needed, see e.g.~\cite{BOS, Dap-Zab2002}.  In the present paper, we will only 
use the sequence $(u_{n,k})_{n,k}$ and we will not need to deal with the solutions $(u_k)_k$ to the infinite-dimensional equation \eqref{eq-infinita}.
\end{rmk}

\subsection{Uniform estimates}
We focus here on uniform estimates for the solution $u_{n,k}$
to the Kolmogorov equation \eqref{eq:kolm}, 
independent of $n$.
We collect the results in the following statement.
\begin{proposition}
\label{prop:est}
Assume \ref{H0}, \ref{H1}, \ref{H2_0}--\ref{H6_0} and, for every $n\in\N_+$ and $k\in\{1,\ldots,n\}$,
let $u_{n,k}$ be the solution to the Kolmogorov equation \eqref{eq:kolm} as given by Proposition~\ref{prop:kolm}. Then, the following statements hold.
\begin{itemize}
\item 
For all $\gamma\in[0,\beta]$, there exist constants $\mathfrak C_1, \mathfrak C_2>0$,
depending only on the data $\beta,\gamma,\delta,\theta,B$, such that, for every $\bar c$ satisfying \eqref{c}--\eqref{c'}, for every $n\in\N_+$ and $k\in\{1,\ldots,n\}$, it holds
\begin{align}
\label{est0}
\|u_{n,k}\|_{C^0_b(H_n)} &\leq \mathfrak C_1        \|g\|_{C^\theta_b(H)}\,,\\ 
\label{est1}
\|A_n^\gamma Du_{n,k}\|_{C^0_b(H_n; H_n)}&\leq \frac{\mathfrak C_2}{(\bar c\lambda_k)^{\frac{1+\theta}2
-\gamma-(1-\theta)\delta}}\|g\|_{C^\theta_b(H)}\,.
\end{align}
\item If in addition $(1-\theta)\delta<\frac\theta2$, then 
for all $\gamma\in[-\delta, \frac\theta2-\delta(2-\theta))$,
there exists a constant $\mathfrak C_3>0$,
depending only on the data $\beta,\gamma,\delta,\theta,B$, such that, for every $\bar c$ satisfying \eqref{c}--\eqref{c'},
for every $n\in\N_+$ and $k\in\{1,\ldots,n\}$, it holds \begin{align}
\label{est2}
\|D(A_n^{\gamma}Du_{n,k})\|_{C^0_b(H_n; \cL(H_n))}
\leq \frac{\mathfrak C_3}
{(\bar c\lambda_k)^{\frac{\theta}{2}-\gamma-\delta(2-\theta)}}
\|g\|_{C^\theta_b(H)}\,.
\end{align}
\end{itemize}
\end{proposition}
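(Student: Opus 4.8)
The plan is to read off all three estimates from the mild identity \eqref{eq:kolm_mild},
\begin{equation*}
u_{n,k}(x)=\int_0^{+\infty}e^{-\bar c\lambda_k t}R_{n,t}\left[\scal{B_n}{Du_{n,k}}+g_{|H_n}\right](x)\,\d t\,,
\end{equation*}
by applying $A_n^\gamma D$ and $D(A_n^\gamma D\,\cdot\,)$ under the integral sign and invoking the smoothing bounds \eqref{Hstima1var}--\eqref{Hstima2var} of Proposition~\ref{stimeHolder} (and their counterparts for $R^{z_0}_{n,t}$ in the critical regime). The recurring device is the elementary identity $\int_0^{+\infty}e^{-at}t^{-s}\,\d t=a^{s-1}\Gamma(1-s)$, valid for $a>0$ and $s<1$: taking $a=\bar c\lambda_k$ converts each time-smoothing exponent into exactly the negative power of $\bar c\lambda_k$ displayed in \eqref{est1}--\eqref{est2}. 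The only genuinely non-trivial ingredient is an a priori control of the source in $C^\theta_b$, which I isolate as a preliminary step.

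\emph{Step 1 (uniform control of the source).} First I would prove that $\norm{A_n^\beta Du_{n,k}}_{C^\theta_b(H_n;H_n)}\le C\norm{g}_{C^\theta_b(H)}$, with $C$ independent of $n$ and $k$. This is essentially a by-product of Proposition~\ref{prop:kolm}: the map $\mathcal V_{n,k}$ is affine, $\mathcal V_{n,k}\varphi=T_{n,k}\varphi+G_{n,k}$, where $G_{n,k}=\int_0^{+\infty}e^{-\bar c\lambda_k t}R_{n,t}[g_{|H_n}]\,\d t$ and $T_{n,k}$ is its linear part; the contraction computations carried out there show that $\norm{T_{n,k}}\le\frac12$ in the equivalent norm used in that proof, uniformly in $n,k$ (thanks to \eqref{c}--\eqref{c'} and $\lambda_k\ge\lambda_1$). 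Hence the fixed point obeys $\norm{u_{n,k}}_*\le 2\norm{G_{n,k}}_*$, and $\norm{G_{n,k}}_*$ is bounded by $\norm{g}_{C^\theta_b(H)}$ using \eqref{Hstima1.5var} in the subcritical case, or the split Hölder estimate \eqref{normCt>1}--\eqref{normCt<1} in the critical case. By the product rule for Hölder norms together with \eqref{est:Bn2}, this yields $\Phi_{n,k}:=\norm{\scal{B_n}{Du_{n,k}}+g_{|H_n}}_{C^\theta_b(H_n)}\le C'\norm{g}_{C^\theta_b(H)}$, uniformly in $n,k$.

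\emph{Step 2 (the estimates).} With $\Phi_{n,k}$ under control the rest is direct. For \eqref{est0} I bound $\norm{u_{n,k}}_{C^0_b}\le\Phi_{n,k}\int_0^{+\infty}e^{-\bar c\lambda_k t}\,\d t=\Phi_{n,k}/(\bar c\lambda_k)$, using that $R_{n,t}$ contracts $C^0_b$ and $\lambda_k\ge\lambda_1$. For \eqref{est1} with $\gamma\in[0,\beta]$ I apply \eqref{Hstima1var} with $\vartheta=\theta$, whose exponent simplifies to $(1-\theta)(\tfrac12+\delta)+\gamma$; integrating against $e^{-\bar c\lambda_k t}$ produces the factor $(\bar c\lambda_k)^{-(\frac{1+\theta}{2}-\gamma-(1-\theta)\delta)}$, the exponent being strictly positive since $\gamma\le\beta\le\frac12-\delta$ forces $\tfrac{1+\theta}2-\gamma-(1-\theta)\delta\ge\frac\theta2(1+2\delta)>0$. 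For \eqref{est2} with $\gamma\in[-\delta,\frac\theta2-\delta(2-\theta))$ I use \eqref{Hstima2var}, whose exponent $1-\frac\theta2+\gamma+\delta(2-\theta)$ is strictly below $1$ exactly on this range; the Gamma-integral then gives $(\bar c\lambda_k)^{-(\frac\theta2-\gamma-\delta(2-\theta))}$, and the standing assumption $(1-\theta)\delta<\frac\theta2$ is precisely what makes this $\gamma$-interval non-empty. The constants $\mathfrak C_1,\mathfrak C_2,\mathfrak C_3$ collect the $M_{\gamma,\delta,\theta}$, the Euler factors, and the Step~1 constant, all independent of $n,k$.

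\emph{Main obstacle.} The delicate point is Step~1 in the critical regime $\beta+\delta=\frac12$, where the exponent $\frac12+\beta+\delta$ in \eqref{Hstima1.5var} equals $1$ and the naive time integral for the Hölder seminorm diverges. This is exactly the difficulty already resolved in Proposition~\ref{prop:kolm} through the shifted semigroup $R^{z_0}_{n,t}$ and the splitting of the increment into the regimes $\norm{h}_{H_n}\ge1$ and $\norm{h}_{H_n}\le1$; I would simply quote those bounds, checking only that their constants depend on the structural data and not on $n$ or $k$. By contrast, the integrals in Step~2 converge in both regimes: for $\gamma\le\beta$ one has $\tfrac{1-\theta}2+(1-\theta)\delta+\gamma\le 1-\theta(\tfrac12+\delta)<1$, so no separate treatment of the critical case is needed there, and the passage to the limit in $n$ and the summation over $k$ (exploited later in the It\^o--Tanaka argument) rely solely on these uniform bounds.
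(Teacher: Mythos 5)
Your proposal is correct and follows essentially the same route as the paper: a preliminary uniform bound on $\|A_n^\beta Du_{n,k}\|_{C^\theta_b(H_n;H_n)}$ (split into the subcritical case and the critical case $\beta+\delta=\frac12$ handled via the shifted semigroup $R^{z_0}_{n,t}$), followed by reading \eqref{est0}--\eqref{est2} off the mild formulation \eqref{eq:kolm_mild} through the smoothing estimates of Proposition~\ref{stimeHolder} and the Gamma-function integrals in $\bar c\lambda_k$. Your affine-contraction phrasing of Step 1 (writing $\mathcal V_{n,k}=T_{n,k}+G_{n,k}$ and concluding $\|u_{n,k}\|_*\le 2\|G_{n,k}\|_*$) is just a repackaging of the paper's absorption argument, which inserts $u_{n,k}$ into the same inequality and absorbs the self-referential term using the smallness guaranteed by \eqref{c}--\eqref{c'}.
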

\begin{rmk}
    Note that the constants $\mathfrak C_1, \mathfrak C_2, \mathfrak C_3$ do not depend on the values of $\bar c$.
    Moreover, the exponent  $\frac{1+\theta}2
-\gamma-(1-\theta)\delta$ appearing in \eqref{est1} is positive 
for every $\gamma\in[0,\beta]$
thanks to \ref{H2_0}. Analogously, the exponent
$\frac{\theta}{2}-\gamma-\delta(2-\theta)$ 
appearing in \eqref{est2} is positive for every 
$\gamma\in[-\delta, \frac\theta2-\delta(2-\theta))$
since $(1-\theta)\delta<\frac\theta2$.
\end{rmk}

\begin{proof}
\underline{\sc Step 1.} We first prove that there exists a constant $\mathfrak C>0$, depending only on 
$\beta,\delta,\theta, B$, such that 
\begin{equation}
\label{est_aux1}
\|A_n^\beta Du_{n,k}\|_{C^\theta_b(H_n;H_n)}
\leq
\mathfrak C \|g\|_{C^\theta_b(H)}\,.
\end{equation}
This will be shown separately in the two cases 
$\beta+\delta\in[0,\frac12)$ and $\beta+\delta=\frac12$.\\
{\bf Case \boldsymbol{$\beta+\delta\in[0,\frac12)$}.}
We use the mild formulation \eqref{eq:kolm_mild}
and estimate \eqref{Hstima1.5var} with $\gamma=\beta$
to infer that 
\begin{align*}
\|A_n^\beta Du_{n,k}\|_{C^\theta_b(H_n;H_n)}
\leq M_{\beta,\delta,\theta}
\int_0^{\infty}
e^{-\bar c\lambda_k t}
\frac1{t^{\frac12+\beta+\delta}}
\left[\|\scal{B_n}{Du_{n,k}}\|_{C^\theta_b(H_n)}
+\|g_{|H_n}\|_{C^\theta_b(H_n)}\right]\,\d t\,.
\end{align*}
Hence, by using the estimates \eqref{est:Bn2} and \eqref{est:g} and proceeding as in the proof of Proposition~\ref{prop:kolm}, we infer that 
\begin{align*}
\|A_n^\beta Du_{n,k}\|_{C^\theta_b(H_n;H_n)}
&\leq M_{\beta,\delta,\theta}
\int_0^{\infty}
e^{-\bar c\lambda_k t}
\frac1{t^{\frac12+\beta+\delta}}
\left[
C_B\|A_n^\beta Du_{n,k}\|_{C^\theta_b(H_n;H_n)}
+\|g\|_{C^\theta_b(H)}\right]\,\d t\\
&\leq 
\frac1{\bar c^{\frac12-\beta-\delta}}
\left[
\frac1{\lambda_1^{\frac12-\beta-\delta}}M_{\beta,\delta,\theta}C_B\Gamma\left(\frac12-\beta-\delta\right)\right]
\|A_n^\beta Du_{n,k}\|_{C^\theta_b(H_n;H_n)}\\
&\qquad+
\frac{M_{\beta,\delta,\theta}}
{\bar c^{\frac12-\beta-\delta}}
\Gamma\left(\frac12-\beta-\delta\right)
\frac1{\lambda_1^{\frac12-\beta-\delta}}\|g\|_{C^\theta_b(H)}
\,.
\end{align*}
By exploiting now the definition of $\bar c$ in \eqref{c}, we obtain 
\[
\|A_n^\beta Du_{n,k}\|_{C^\theta_b(H_n;H_n)}
\leq \left[2\frac{M_{\beta,\delta,\theta}}{\bar c^{\frac12-\beta-\delta}}
\Gamma\left(\frac12-\beta-\delta\right)\right]
\frac1{\lambda_1^{\frac12-\beta-\delta}}
\|g\|_{C^\theta_b(H)}\,,
\]
in particular condition \eqref{c} guarantees that \eqref{est_aux1} holds in the case $\beta+\delta\in[0,\frac12)$ with a constant $\mathfrak C$ independent of the value of $\bar c$.\\
{\bf Case \boldsymbol{$\beta+\delta=\frac12$}.}
    We use here the same notation as in the proof of Proposition~\ref{prop:kolm} in the critical case $\beta+\delta=\frac12$. The equivalent mild formulation of the Kolmogorov equation written in terms of $\tilde B_n$ and
    $R_{n}^{z_0}$ (see \eqref{V_aux2}) reads as
    \[
  u_{n,k}(x):=
  \int_0^{+\infty}e^{-\bar c\lambda_k t} 
  R^{z_0}_{n,t}\left[\langle\tilde B_n, Du_{n,k}\rangle 
  + g_{|H_n}\right](x)\,\d t\,,
  \quad x\in H_n\,.
  \]
  By using analogous arguments as the ones to deduce inequality \eqref{normaCb} in the proof of Proposition~\ref{prop:kolm} and by recalling \eqref{est:g}, we get
  \begin{align}
      \nonumber
      \|A_n^\beta Du_{n,k}\|_{C^0_b(H_n;H_n)}
      &\leq\frac1{\bar c^{\theta(1-\beta)}}
  \left[
  \frac{2M_{\beta,\delta,\theta}}{\lambda_1^{\theta(1-\beta)}}
  \Gamma(\theta(1-\beta))(C_{\tilde B}+C_B)
  \right]
  \norm{A_n^\beta Du_{n,k}}_{C^\theta_b(H_n; H_n)}\\
  \label{est_aux2}
  &\qquad+
  \frac{M_{\beta,\delta,\theta}}{(\bar c\lambda_1)^{\theta(1-\beta)}}
  \Gamma(\theta(1-\beta))\|g\|_{C^\theta_b(H)}\,.
  \end{align}
  The same arguments used to prove inequality \eqref{normaCt} in the proof of Proposition~\ref{prop:kolm} and \eqref{est:g} yield
  \begin{align}
    \nonumber
    &[A_n^\beta Du_{n,k}]_{C^\theta_b(H_n;H_n)}\\
    \label{est_aux3}
    &\leq
\frac{M_{\beta,\delta,\theta}(2-\theta)}{\theta(1-\theta)(1-\beta)}
    \left[
  C_{\tilde B}
  \norm{A_n^\beta Du_{n,k}}_{C^\theta_b(H_n; H_n)}+
  (C_{\tilde B}+C_B)
  \norm{A_n^\beta Du_{n,k}}_{C^0_b(H_n; H_n)}
  +\|g\|_{C^\theta_b(H)}\right]\,.
\end{align}
Hence, by multiplying \eqref{est_aux2} by the constant 
$1+L_{\beta,\delta,\theta}$,
by summing up with \eqref{est_aux3} and by defining the constant 
\[ L^{''''}_{\beta, \delta, \theta}:= 
\frac{M_{\beta,\delta,\theta}}{(\bar c\lambda_1)^{\theta(1-\beta)}}
  \Gamma(\theta(1-\beta))(1+L_{\beta,\delta,\theta})
  +L_{\beta,\delta,\theta}''\,,
  \]
we obtain 
\begin{align*}
    &L_{\beta,\delta,\theta}
    \norm{A_n^\beta Du_{n,k}}_{C^0_b(H_n; H_n)}
    +\|A_n^\beta Du_{n,k}\|_{C^\theta_b(H_n;H_n)}\\
    &\leq
    \left[
    \frac{L_{\beta,\delta,\theta}'}{\bar c^{\theta(1-\beta)}}
    +C_{\tilde B}L_{\beta,\delta,\theta}''
    \right]
  \|A^\beta_n Du_{n,k}\|_{C^\theta_b(H_n;H_n)}
  +
\frac{L_{\beta,\delta,\theta}}2
\norm{A_n^\beta Du_{n,k}}_{C^0_b(H_n; H_n)}
+L^{''''}_{\beta, \delta, \theta} \|g\|_{C^\theta_b(H)}\,. 
\end{align*}
Hence, thanks to condition \eqref{c'} 
\begin{align*}
\frac{L_{\beta,\delta,\theta}}2
\norm{A_n^\beta Du_{n,k}}_{C^0_b(H_n; H_n)}
+\frac12\|A_n^\beta Du_{n,k}\|_{C^\theta_b(H_n;H_n)}
\leq L^{''''}_{\beta, \delta, \theta}\|g\|_{C^\theta_b(H)}\,.
\end{align*}
In particular condition \eqref{c'} guarantees that
\eqref{est_aux1} holds also in the case 
$\beta+\delta=\frac12$ with a constant $\mathfrak C$ independent of the value of $\bar c$.

\underline{\sc Step 2.}
By proceeding as in \eqref{V_aux1}, we have
\begin{align*}
\norm{u_{n,k}}_{C^0_b(H_n)}
&\leq \frac1{\bar c\lambda_1}
\left(C_B
\norm{A_n^\beta Du_{n,k}}_{C^\theta_b(H_n;H_n)}
+\|g\|_{C^\theta_b(H)}\right)\,,
\end{align*}
so that \eqref{est0} follows from \eqref{est:g} and 
\eqref{est_aux1}. The constant $\mathfrak C_1$ is independent of the value
of $\bar c$ thanks to \eqref{c}--\eqref{c'}.

\underline{\sc Step 3.}
Now, we use the mild formulation \eqref{eq:kolm_mild} and estimate \eqref{Hstima1var} with $\gamma\in[0,\beta]$ to infer that 
\begin{align*}
&\|A_n^\gamma Du_{n,k}\|_{C^0_b(H_n;H_n)}\\
&\leq M_{\gamma,\delta,\theta}
\int_0^{\infty}
e^{-\bar c\lambda_k t}
\frac1{t^{(1-\theta)(\frac12+\gamma+\delta)+\theta\gamma}}
\left[\|\scal{B_n}{Du_{n,k}}\|_{C^\theta_b(H_n)}
+\|g_{|H_n}\|_{C^\theta_b(H_n)}\right]\,\d t\\
&\leq M_{\gamma,\delta,\theta}
\int_0^{\infty}
e^{-\bar c\lambda_k t}
\frac1{t^{(1-\theta)(\frac12+\gamma+\delta)+\theta\gamma}} \left[2C_B\|A_n^\beta Du_{n,k}\|_{C^\theta_b(H_n;H_n)}
+\|g\|_{C^\theta_b(H)}\right]\,\d t\,.
\end{align*}
Hence, by taking \eqref{est_aux1}
into account we get 
\begin{align*}
\|A_n^\gamma Du_{n,k}\|_{C^0_b(H_n;H_n)}
&\leq M_{\gamma,\delta,\theta}
\left(
2C_B\mathfrak C+1
\right)\|g\|_{C^\theta_b(H)}
\int_0^{+\infty}
e^{-\bar c\lambda_k t}
\frac1{t^{(1-\theta)(\frac12+\gamma+\delta)+\theta\gamma}}\,\d t\\
&\leq \frac{M_{\gamma,\delta,\theta}\left(
2C_B\mathfrak C+1
\right)} {\bar c^{\frac{1+\theta}{2}-(1-\theta)\delta-\gamma}}\Gamma\left(\frac{1+\theta}{2}-(1-\theta)\delta-\gamma\right)\frac{1}
{\lambda_k^{\frac{1+\theta}{2}-(1-\theta)\delta-\gamma}}
\|g\|_{C^\theta_b(H)}\,,
\end{align*}
so that \eqref{est1} is proved. Furthermore, by using the estimate \eqref{Hstima2var} with $\gamma\geq-\delta$ in the mild formulation \eqref{eq:kolm_mild}, by \eqref{est_aux1} we obtain
\begin{align*}
&\|D(A_n^{\gamma}Du_{n,k})\|_{C^0_b(H_n; \cL(H_n))}\\
&\leq 
M_{\gamma,\delta,\theta}
\int_0^{\infty}
e^{-\bar c\lambda_k t}
\frac{1}{t^{1-\frac{\theta}{2}+\gamma+\delta(2-\theta)}}
\left[2C_B\|A_n^\beta Du_{n,k}\|_{C^\theta_b(H_n;H_n)}
+\|g\|_{C^\theta_b(H)}\right]\,\d t\\
&\leq M_{\gamma,\delta,\theta}
\left(2C_B\mathfrak C+1\right)\|g\|_{C^\theta_b(H)}
\int_0^{\infty}
e^{-\bar c\lambda_k t}
\frac{1}{t^{1-\frac{\theta}{2}+\gamma+\delta(2-\theta)}}\,\d t\\
&\leq \frac{M_{\gamma,\delta,\theta}\left(
2C_B\mathfrak C+1
\right)}
{\bar c^{\frac{\theta}{2}-\gamma-\delta(2-\theta)}}
\Gamma\left(\frac{\theta}{2}-\gamma-\delta(2-\theta)\right)
\frac{1}
{\lambda_k^{\frac{\theta}{2}-\gamma-\delta(2-\theta)}}
\|g\|_{C^\theta_b(H)}\,,
\end{align*}
so that also \eqref{est2} is proved. We note that the constants $\mathfrak C_2, \mathfrak C_3$ are independent of the value of $\bar c$  since so is $\mathfrak C$.
\end{proof}


\section{Proof of the main results}
\label{sec:U0}
In this section we prove the main 
Theorems~\ref{th:1}, \ref{th:2}, 
and \ref{th:3}. To this aim, we first introduce a finite dimesional approximation procedure. 

\subsection{Finite dimensional approximation of the SPDE}
\label{ssec:approx}
Let $x\in H$ be fixed and let
$(X,W)$ be a weak mild solution to \eqref{SDE}
in the sense of Definition~\ref{def-sol} with initial datum $x$. 
For every $n\in\N_+$, 
we consider the $H_n$-valued stochastic process $X_n:=P_nX$,
that satisfies
\begin{equation*}
X_n(t):=e^{-tA_n} P_nx
+\int^t_0e^{-(t-s)A_n}B_n(X(s))\,\d s
+W_{A_n}(t)\,,
\quad t\geq0\,, 
\end{equation*}
where $W_{A_n}$ is the stochastic convolution process given by 
\begin{equation*}
W_{A_n}(t):=\int^t_0e^{-(t-s)A_n}A_n^{-\delta}\,\d W(s)\,, \quad t\geq0\,.
\end{equation*}
In particular, for every $n\in\N_+$ the process $X_n$ solves on $H_n$
the following It\^o equation
\begin{align}\label{eq-approssimata}
X_n(t)+\int_0^tA_nX_n(s)\,\d s
=P_nx + \int_0^tB_n(X(s))\,\d s
+\int_0^tA_n^{-\delta}\,\d W(s)\,, \quad\forall\,t\geq0\,,
\quad\P\text{-a.s.}
\end{align}

\begin{proposition}\label{prop:Xn}
Assume \ref{H0}, \ref{H1},
\ref{H2_0}, \ref{H3_0}, \ref{H6_0}, and let $x\in H$. Then, 
for every $T>0$ we have
\begin{equation}\label{supE}
\lim_{n \to \infty}\sup_{t\in [0,T]}\mathbb{E}
\|X_{n}(t)-X(t)\|_H=0\,.
\end{equation} 
\end{proposition}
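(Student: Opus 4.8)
The starting point is the observation that the process $X_n$ introduced above is nothing but the projection of $X$: applying $P_n$ to the mild formulation \eqref{mild_al} and using that $P_n$ commutes with $A$ and with $A^{-\delta}$, so that $P_ne^{-rA}=e^{-rA_n}P_n$ and $P_nW_A=W_{A_n}$, one gets the identity $X_n=P_nX$. Hence the statement reduces to showing that $P_nX\to X$ in $L^1(\Omega;H)$ uniformly on $[0,T]$, with no approximation of the equation itself being involved. Writing $\Id-P_n$ for the orthogonal projection onto the closed span of $\{e_k:k>n\}$, I would record the pathwise bound $\norm{X_n(t)-X(t)}_H=\norm{(\Id-P_n)X(t)}_H\le\norm{X(t)}_H$, the last inequality holding because $\Id-P_n$ is an orthogonal projection.

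The plan is then to conclude by dominated convergence. First I would fix a full-measure event on which $X(\cdot,\omega)\in C^0([0,T];H)$, which is guaranteed by Definition~\ref{def-sol} since $x\in H$. On such $\omega$ the path image $K(\omega):=\{X(t,\omega):t\in[0,T]\}$ is a compact subset of $H$, being the continuous image of $[0,T]$. Since $\{e_k\}$ is an orthonormal basis of $H$, one has $P_n\to\Id$ strongly with $\norm{P_n}_{\mathscr{L}(H)}\le1$, and the strong convergence of a uniformly bounded sequence of operators is uniform on compact sets; therefore
\[
G_n(\omega):=\sup_{t\in[0,T]}\norm{(\Id-P_n)X(t,\omega)}_H=\sup_{y\in K(\omega)}\norm{(\Id-P_n)y}_H\xrightarrow[n\to\infty]{}0\qquad\text{for a.e. }\omega.
\]

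It then remains to dominate $G_n$ by an integrable random variable, and $G_n\le G:=\sup_{t\in[0,T]}\norm{X(t)}_H$ by the pointwise bound above. Here I would estimate $G$ termwise via \eqref{mild_al}: the contraction property gives $\sup_t\norm{e^{-tA}x}_H\le\norm{x}_H$; the boundedness of $B$ in $D(A^{-\beta})$ from \ref{H6_0}, together with the analytic smoothing bound $\norm{e^{-rA}z}_H\le M_\beta r^{-\beta}\norm{z}_{-\beta}$ (cf.~\eqref{semiS}, valid as $\beta\le\frac12<1$ by \ref{H2_0}), yields the deterministic bound $\sup_{t\in[0,T]}\norm{\int_0^t e^{-(t-s)A}B(X(s))\,\d s}_H\le M_\beta C_B T^{1-\beta}/(1-\beta)$; and finally, under \ref{H3_0}, the stochastic convolution admits a continuous $H$-valued modification with $\E\sup_{t\in[0,T]}\norm{W_A(t)}_H<\infty$ by standard maximal estimates for Gaussian convolutions (see \cite[Chapter~4]{Dap-Zab14}). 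Summing the three contributions gives $\E[G]<\infty$, so dominated convergence yields $\E[G_n]\to0$. Since $\sup_{t\in[0,T]}\E\norm{X_n(t)-X(t)}_H\le\E[G_n]$, the claim \eqref{supE} follows.

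I expect the only genuinely delicate point to be the integrability of the dominating variable, that is, $\E[\sup_{t\in[0,T]}\norm{X(t)}_H]<\infty$; this is where the boundedness assumption \ref{H6_0} on $B$ and the trace condition \ref{H3_0} (through the maximal bound for $W_A$) enter in an essential way. By contrast, the remaining ingredients — the identification $X_n=P_nX$, the compactness of the path, and the uniform-on-compacts convergence $P_n\to\Id$ — are elementary and require no quantitative control.
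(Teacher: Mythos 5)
Your proof is correct, but it takes a genuinely different route from the paper's. The paper never passes through a pathwise supremum: it splits $X_n-X$ into the three pieces coming from the mild formulation \eqref{mild_al} (initial datum, deterministic convolution, stochastic convolution) and estimates each one at a fixed time $t$ --- the initial datum by contractivity of $e^{-tA}$, the deterministic convolution by the smoothing bound \eqref{semiS} plus the H\"older inequality and dominated convergence, and the stochastic convolution by the It\^o isometry, which yields $\sup_{t}\mathbb E\|W_A(t)-W_{A_n}(t)\|_H^2\leq \sum_{k>n}\lambda_k^{-(1+2\delta)}\to0$ directly from \ref{H3_0}. You instead treat the whole difference at once as $-(\Id-P_n)X$, upgrade it to an almost sure statement about $\sup_{t\in[0,T]}\|(\Id-P_n)X(t)\|_H$ via path compactness and uniform convergence of $P_n$ on compacts, and then conclude by dominated convergence; the price is that you must produce an integrable dominating variable, which forces you to invoke a Gaussian maximal estimate $\mathbb E\sup_{t\in[0,T]}\|W_A(t)\|_H<\infty$ (Fernique, or the factorization method), an ingredient the paper's argument avoids entirely. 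What each approach buys: yours actually proves the stronger statement $\mathbb E\sup_{t\in[0,T]}\|X_n(t)-X(t)\|_H\to0$ (expectation of the sup rather than sup of expectations) with a soft, structure-free argument, whereas the paper's termwise estimates are quantitative --- the error is controlled explicitly by $\|P_nx-x\|_H$, by $\mathbb E\|P_nB(X)-B(X)\|_{L^p(0,T;D(A^{-\beta}))}$, and by the trace tail $\sum_{k>n}\lambda_k^{-(1+2\delta)}$ --- and this makes the role of \ref{H3_0} transparent. One small slip in your justification: \ref{H2_0} does not give $\beta\leq\frac12$, since $\delta$ may be negative and $\beta$ can be as large as $\frac12-\delta$, hence arbitrarily close to $1$; fortunately all your argument needs is $\beta<1$, which does follow from \ref{H2_0}, so the bound $\int_0^t(t-s)^{-\beta}\,\d s<\infty$ and hence your domination of the deterministic convolution remain valid.
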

\begin{proof}
The proof is similar to the one of \cite[Proposition 4.2]{AddBig}.
Firstly, since $A_n=AP_n$ we have 
\begin{align*}
\sup_{t\geq0}
\norm{e^{-tA_n} P_nx-e^{-tA}x}^2_H
&=\sup_{t\geq0}\norm{e^{-tA}(P_nx-x)}^2_H
\leq 
\sup_{t\geq0}\norm{e^{-tA}}^2_{\mathscr{L}(H)}
\norm{P_nx-x}^2_H\,,
\end{align*}
which implies, recalling that $-A$ is negative thanks to \ref{H1}, that
\[
\lim_{n\rightarrow\infty}
\sup_{t\geq0}\norm{e^{-tA_n} P_nx-e^{-tA}x}^2_H=0\,.
\]

Secondly, since $\beta<1$, let us fix $q\in(1,\frac1\beta)$ 
and $p:=\frac{q}{q-1}$, so that $\frac1p+\frac1q=1$. 
Thanks to estimate \eqref{semiS}
and the H\"older inequality,
for every $n\in\N_+$ and $t\geq0$ we get
\begin{align*}
&\E\norm{\int_0^te^{-(t-s)A_n}B_n(X(s))\,\d s
-\int_0^te^{-(t-s)A}B(X(s))\,\d s}^2_H\\
&\qquad=\E\norm{\int_0^t A^\beta e^{-(t-s)A}A^{-\beta}\left[
B_n(X(s))
-B(X(s))\right]\,\d s}^2_H\\
&\qquad\leq M^2_\beta\E
\left(\int_0^t\frac{1}{(t-s)^\beta}
\norm{P_nB(X(s))-B(X(s))}_{-\beta}\,\d s
\right)^2\\
&\qquad\leq M^2_\beta\left(\int_0^t\frac{1}{(t-s)^{q\beta}}\,\d s\right)^{\frac2q}\E
\left(\int_0^t\norm{P_nB(X(s))-B(X(s))}^p_{-\beta}\,\d s\right)^{\frac2p}\\
&\qquad\leq M^2_\beta
\left(\int_0^T\frac{1}{(t-s)^{q\beta}}\,\d s\right)^{\frac2q}
\E\norm{P_nB(X)-B(X)}_{L^p(0,T;D(A^{-\beta}))}^2
\end{align*}
Since $B(X)\in L^\infty(0,T;D(A^{-\beta}))$,
by the dominated convergence theorem we get
\[
\lim_{n\rightarrow \infty}
\sup_{t\in [0,T]}\mathbb{E}
\norm{\int_0^te^{-(t-s)A_n}B_n(X(s))\,\d s
-\int_0^te^{-(t-s)A}B(X(s))\,\d s}^2_H=0\,.
\]
For every $t\geq 0$ we can write
\begin{align*}
W_A(t)-W_{A_n}(t)=
\int_0^t e^{-(t-s)A}A^{-\delta}\left(\Id_H-P_n\right)\,\d W(s)\,,
\qquad \mathbb P\text{-a.s.}
\end{align*}
Therefore, for every $t\geq0$ and every $n\in\N_+$ we get
\begin{align*}
\mathbb{E}\|W_A(t)-W_{A_n}(t)\|^2_H
&= \int_0^t\sum_{k=n+1}^\infty e^{-2(t-s)\lambda_k}
\lambda_k^{-2\delta}\,\d s
=\int_0^t\sum_{k=n+1}^\infty e^{-2s\lambda_k}
\lambda_k^{-2\delta}\,\d s\\
&\leq \int_0^{\infty}e^{-2r}\,\d r
\sum_{k=n+1}^\infty
\frac1{\lambda_k^{1+2\delta}}
\leq \sum_{k=n+1}^\infty
\frac1{\lambda_k^{1+2\delta}}\,,
\end{align*}
so that \ref{H3_0} yields
\[
\lim_{n\rightarrow \infty}\sup_{t\geq0}\mathbb{E}
\|W_A(t)-W_{A_n}(t)\|_H^2=0\,.
\]
This concludes the proof.
\end{proof}

\subsection{Weak uniqueness}\label{ssec:weak}
We prove here Theorem~\ref{th:1}:
the proof is divided in multiple steps. First,
we focus on the case $\alpha=0$, by further assuming 
\ref{H5_0}--\ref{H6_0}. Secondly, we remove assumption 
\ref{H5_0} and \ref{H6_0}. Eventually, 
we extend to the general case $\alpha>0$ 
and general initial data. 

\begin{proof}[Proof of Theorem~\ref{th:1}]
Let $(X,W)$ and $(Y,W)$ be two weak mild solutions to \eqref{SDE} with same initial datum $x$ in the sense of Definition~\ref{def-sol}, defined on the same probability space. 
    
\underline{\sc Step 1.} Here, we also assume that $\alpha=0$ and that \ref{H5_0}--\ref{H6_0} hold: 
\ref{H5_0}--\ref{H6_0}
will be removed in {\sc Step 2} while 
the case $\alpha>0$ will be treated in {\sc Step 3.}
Let $(X_n)_n$ and $(Y_n)_n$ be the respective approximated solutions as constructed in Subsection~\ref{ssec:approx}.
Let $g\in C^\theta_b(H)$ be arbitrary but fixed and let $(u_{n,1})_{n\in\N_+}$ be the family of solutions to the Kolmogorov equation \eqref{eq:kolm} with the choice $n\in\N_+$
and $k=1$, as given by Proposition~\ref{prop:kolm}. Then, an application of the It\^o formula yields,
together with the Kolmogorov equation \eqref{eq:kolm}, for all $t\geq0$,
\begin{align}
\nonumber &\E\left[e^{-\bar c\lambda_1 t}(u_{n,1}(X_n(t))-u_{n,1}(Y_n(t)))\right]
+\E\int_0^te^{-\bar c\lambda_1 s}\left[g(X_n(s))-g(Y_n(s))\right]\,\d s\\
\label{ito_aux}
&=\E\int_0^te^{-\bar c\lambda_1 s}\left[
B_n(X(s))-B_n(X_n(s))\right]\,\d s
-\E\int_0^te^{-\bar c\lambda_1 s}\left[
B_n(Y(s))-B_n(Y_n(s))\right]\,\d s\,.
\end{align}
We let now $n\to\infty$ in all the terms separately, with $t>0$ fixed.
First, thanks to \eqref{supE} and the dominated convergence theorem, 
we have that 
\begin{align*}
&\left|\E\int_0^te^{-\bar c\lambda_1 s}\left[g(X_n(s))
-g(Y_n(s))\right]\,\d s
-\E\int_0^te^{-\bar c\lambda_1s}\left[g(X(s))
-g(Y(s))\right]\,\d s\right|\\
&\qquad\leq\|g\|_{C^\theta_b(H)}
\E\int_0^te^{-\bar c\lambda_a s}
\left[\|X_n(s)-X(s)\|_H^\theta+\|Y_n(s)-Y(s)\|_H^\theta\right]\,\d s\to 0\,.
\end{align*}
Furthermore, thanks to \eqref{est0} we have 
\begin{align*}
\limsup_{n\to\infty}\left|
\E\left[e^{-\bar c\lambda_1 t}(u_{n,1}(X_n(t))-u_{n,1}(Y_n(t)))\right]\right|
\leq 2\mathfrak C_1e^{-\bar c\lambda_1 t}\|g\|_{C^\theta_b(H)}\,.
\end{align*}
Finally, thanks to \eqref{est:Bn2} and \eqref{supE}, together with the dominated convergence theorem, we infer that
\begin{align}\nonumber
&\left|\E\int_0^te^{-\bar c\lambda_1 s}\left[B_n(X(s))-B_n(X_n(s))
\right]\,\d s-\E\int_0^te^{-\bar c\lambda_1 s}\left[B_n(Y(s))-B_n(Y_n(s))
\right]\,\d s\right|\\
\label{eq:aux}
&\qquad\leq C_B\E\int_0^t\left[\|X_n(s)-X(s)\|_H^\theta+\|Y_n(s)-Y(s)\|_H^\theta\right]\,\d s\to 0\,.
\end{align}
Hence, by letting $n\to\infty$ in \eqref{ito_aux}, we get
\[
\left|\E\int_0^te^{-\bar c\lambda_1 s}\left[g(X(s))
-g(Y(s))\right]\,\d s\right|\leq 
2\mathfrak C_1e^{-\bar c\lambda_1 t}\|g\|_{C^\theta_b(H)}\,
\]
and, by letting now $t\to\infty$, we infer that 
\[
\E\int_0^{+\infty}e^{-\bar c\lambda_1 s}g(X(s))\,\d s=\E\int_0^{+\infty}e^{-\bar c\lambda_1 s}g(Y(s))\,\d s\,.
\]
Hence, by the properties of the Laplace transform, weak uniqueness for \eqref{SDE} holds under the additional assumptions \ref{H5_0}--\ref{H6_0},  $\alpha =0$, in the sense of Definition~\ref{uniqueness}.

\underline{\sc Step 2.} First, we show how to remove assumption \ref{H5_0}: the idea is to exploit \cite[Thm.~26]{Pri2015} by arguing as in \cite[Lem.~22]{Pri2021}.
To this aim, we consider the set of smooth and cylindrical functions $C^2_{cyl}(H)$ on $H$ as the set of functions $g:H\to\mathbb R$ such that 
there exist ${i_1},\ldots,{i_N}\in\N$ and 
$\tilde g\in C^2(\mathbb R^N)$ with compact support such that 
$g(x)=\tilde g((x,e_{i_1})_H, \ldots, (x,e_{i_N})_H)$
for all $x\in H$.
Let $\mathcal{N}$ be the second order Kolmogorov operator associated to \eqref{SDE}, namely
\[
\mathcal N\varphi(x):=
-\frac{1}{2}\Tr\left[A^{-2\delta}D^2\varphi(x)\right]+\scal{Ax}{D\varphi(x)}+\scal{B(x)}{D\varphi(x)}\,, \quad x\in H\,, \quad \varphi\in C^2_{cyl}(H)\,.
\]
We note that $\mathcal{N}$ verifies \cite[Hyp.~17]{Pri2015} 
as required in 
\cite[Thm.~26]{Pri2015} (see \cite[Rmk.~8]{Pri2015}).
Furthermore, 
by proceeding as in the proof of \cite[Lem.~22]{Pri2021}, 
exploiting the continuity of $B$, 
there exist two sequences $\{x_k\}_{k\in\N_+}\subseteq H$ and $\{r_k\}_{k\in\N_+}\subseteq \R^+$ such that
\begin{itemize}
\item setting $U_k:=\{x\in H\; :\; \norm{x-x_k}< r_k/2\}$, $k\in\mathbb N$, then $H=\cup_{k\in\N_+}U_k$;
\item setting $\tilde U_k:=\{x\in H\; :\; \norm{x-x_k}< r_k\}$, $k\in\mathbb N$, then
\[
\sup_{x\in \tilde U_k}\norm{B(x)-B(x_k)}_{-\beta}<
\frac{\theta(1-\theta)(1-\beta)}
{4M_{\beta,\delta,\theta}(2-\theta)}\,.
\]
\end{itemize}
Let $\rho\in C^\infty_0(\R^+;[0,1])$ be such that 
\begin{align*}
\rho(s)=1\quad \forall\,s\in [0,1]\,, \qquad \rho(s)=0\quad \forall\,s\in [2,\infty)\,.
\end{align*}
For every $k\in\N_+$ we define
\[
B_k(x):=\rho\left(4r_k^{-2}\norm{x-x_k}^2\right)B(x)+\left(1-\rho\left(4r_k^{-2}\norm{x-x_k}^2\right)\right)B(x_k),\qquad x\in H\,.
\]
We note that for every $k\in\N_+$ we have 
$B_k\in C^\theta_b(H; D(A^{-\beta}))$ and 
\[
B_k(x)-B(x_k)=
\begin{cases}
\rho(4r_k^{-2}\norm{x-x_k}^2)(B(x)-B(x_k)),\quad&\text{if }x\in \tilde U_k\,,\\
0 \quad&\text{if } x\not\in \tilde U_k\,,
\end{cases}
\]
so that $B_k$ verifies \ref{H5_0} with $z_0=B(x_k)$.
For every $k\in\N_+$, we define
\[
\mathcal N_k\varphi(x):=-\frac{1}{2}\Tr\left[A^{-2\delta}D^2\varphi(x)\right]+\scal{Ax}{D\varphi(x)}
+\scal{B_k(x)}{D\varphi(x)}\,, \quad x\in H\,, \quad\varphi\in C^2_{cyl}(H)\,,
\]
so that by definition of $B_k$ we have
\[
\mathcal{N}_k\varphi(x)=\mathcal{N}\varphi(x) 
\quad\forall\, x\in U_k\,,
\]
which implies that condition (ii) of \cite[Thm.~26]{Pri2015} is satisfied.
Moreover, for every $k\in\N$, the equation 
\begin{align*}
\d X + AX \,\d t=B_k(X)\,\d t+A^{-\delta}\,\d W\,, \qquad X(0)=x\,,
\end{align*}
admits a weak mild solution
(see Remark~\ref{rmk:ex_eq}), which is also unique by {\sc Step 1}.
Hence, thanks to the 
equivalence between martingale and weak mild solutions
given in \cite[Thm.~3.6 and Prop.~6.8]{Kun2013},
we infer that also condition (i) of \cite[Thm.~26]{Pri2015} holds.
We can then conclude by applying 
\cite[Thm.~26]{Pri2015} and exploiting again the equivalence between martingale and weak mild solutions given in \cite[Thm.~3.6 and Prop.~6.8]{Kun2013}.
Eventually, assumption \ref{H6_0} can be removed
by following directly the procedure presented in \cite[Sec.~4.2]{BOS} (with $\alpha=0$). This proves  weak uniqueness for \eqref{SDE} in the case $\alpha =0$, in the sense of Definition~\ref{uniqueness}.

\underline{\sc Step 3.} We consider here the general case $\alpha>0$ and $x\in D(A^\alpha)$. We set
\[
  \mathcal H:=D(A^{\alpha})
\]
and $\tilde e_k:=\lambda_k^{-\alpha} e_k$ for all $k\in\enne$. 
It follows that $\{\tilde e_k:k\in\N_+\}$ is an orthonormal basis of
$\mathcal H$, with $A\tilde e_k=\lambda_k\tilde e_k$ for all $k\in\enne_+$. By denoting by $\mathcal A$ the restriction of
the operator $A$ to $\mathcal H$, with effective domain
given by $D(\mathcal A)=D(A^{1+\alpha})$, one has 
\[ 
  D(\mathcal A^{s}) = D(A^{s+\alpha}) \quad\forall\,s\in\mathbb R\,.
\]
With this notation, $\mathcal W:=A^{-\alpha} W$
is a cylindrical Wiener process on $\mathcal H$, so that,
by setting $\tilde\delta:=\delta-\alpha$, one has formally
that $A^{-\delta}\,\d W=\mathcal A^{-\tilde\delta}\d \mathcal  W$.
Analogously, by setting $\tilde\beta:=\alpha+\beta$, 
the operator $B$ can be reformulated 
as $B:\mathcal  H\to D(\mathcal  A^{-\tilde\beta})$.

It is immediate to check that   assumptions \ref{H1}, 
\ref{H2_0}--\ref{H4_0} are satisfied by
the space $\mathcal  H$,
the parameters $\tilde\beta$, $\tilde \delta$, 
and the operators $\mathcal  A$, $B$.
Indeed, note that 
$\tilde\beta+\tilde\delta=\beta+\delta$, so that
$\tilde\delta=\delta-\alpha\in(-\frac12, \frac12]$ 
and $\tilde\beta=\alpha+\beta\in[0,\frac12-\delta+\alpha]=[0,\frac12-\tilde \delta]$,
as well as the operator $B$ belongs to 
$C^\theta_{{\rm loc}}(\mathcal H; D(\mathcal A^{-\tilde\beta}))$ for some $\theta\in(0,1)$. Hence, 
as a consequence of {\sc Step 1--2} (with $H,\beta, \delta, A, W$ replaced by $\mathcal  H, \tilde\beta$, $\tilde \delta,\mathcal  A,\mathcal W$) 
we have proved that for every bounded and measurable $\psi:C^0(\R_+;\mathcal  H)\to \R$, it holds that
\[
\mathbb E[\psi(X)]=\mathbb E[\psi(Y)]\,. 
\]
 It remains to show that this holds also 
 for every bounded and measurable $\psi:C^0(\R_+;H)\to \R$. To this aim, we note that if $\psi:C^0(\R_+;H)\to \R$ is bounded and measurable, then also $\tilde \psi:=\psi_{|C^0(\R_+;\mathcal  H)}:C^0(\R_+;\mathcal  H)\to \R$ is bounded and measurable: this follows from the fact that $C^0((0,+\infty); D(A^\alpha))$ is a Borel subset of $C^0((0,+\infty);H)$ $($see \cite[Thm.~15.1]{Kec1995}$)$.
 By Definition~\ref{def-sol} 
 one has $X,Y\in C^0(\R_+;\mathcal  H)$ $\mathbb P$-a.s. so that  for every bounded and measurable $\psi:C^0(\R_+;H)\to \R$
\begin{align*}
\mathbb E[\psi(X)]
=\mathbb E[\tilde \psi(X)]
= \mathbb E[\tilde \psi(Y)]
= \mathbb E[\psi(Y)].
\end{align*}
 This proves that weak uniqueness holds for \eqref{SDE}
 with initial data $D(A^\alpha)$, in the sense of Definition~\ref{uniqueness}.
 
 \underline{\sc Step 4.} We consider here the case
 where $B$ is bounded, $\alpha>0$, and $x\in H$.
 By performing the same construction as in {\sc Step 3},
 it holds that $x\in D(\mathcal  A^{-\alpha})$.
We can then replicate the arguments of {\sc Step 1} and {\sc Step 2}, 
by using the refined convergence results of Proposition~\ref{prop:Xn_alpha} in appendix instead of 
Proposition~\ref{prop:Xn}, with the choices $\mathcal H=D(A^\alpha)$,
$\tilde \alpha=\alpha$, $\tilde \beta=\alpha+\beta$,
and $\tilde\delta=\delta-\alpha$.
The only point to adapt is the convergence of
the term corresponding to \eqref{eq:aux}, namely 
\begin{align*}
&\E\int_0^t
\left[\|X_n(s)-X(s)\|_{\mathcal  H}^\theta+\|Y_n(s)-Y(s)\|_{\mathcal  H}^\theta\right]\,\d s\\
&\leq \sup_{s\in(0,t]}\left(s^{{\alpha}\theta}\E\|X_n(s)-X(s)\|_{\mathcal  H}^\theta
+s^{{\alpha}\theta}\E\|Y_n(s)-Y(s)\|_{\mathcal  H}^\theta
\right)
\int_0^ts^{-{\alpha}\theta}\,\d s\to 0
\end{align*}
as $n\to\infty$ thanks to Proposition~\ref{prop:Xn_alpha}
and the fact that $\alpha\theta<1$.
\end{proof}

\subsection{Pathwise uniqueness and continuous dependence}\label{ssec:path}
We prove now Theorems~\ref{th:2}--\ref{th:3}. 
Analogously to Subsection~\ref{ssec:weak},
we first deal with the case $\alpha=0$, by further assuming 
\ref{H6_0}. Secondly, we remove \ref{H6_0} via localisation.
Eventually, we extend to the general case $\alpha>0$
and general initial data.

\begin{proof}[Proof of Theorems~\ref{th:2}--\ref{th:3}]
Let $(X,W)$ and $(Y,W)$ be two weak mild solutions to \eqref{SDE} with initial data $x$ and $y$, respectively, in the sense of Definition~\ref{def-sol}, with the same cylindrical Wiener process $W$ and defined on the same probability space.
Let $(X_n)_n$ and $(Y_n)_n$ be the respective approximated solutions as constructed in Subsection~\ref{ssec:approx}.

\underline{\sc Step 1.} Here, we assume that $\alpha=0$ 
and that \ref{H6_0} holds. To ease the reading of the proof, 
we recall here the formulation of assumptions 
\eqref{trace1}--\eqref{ineq2_al} in the case $\alpha=0$:
\begin{align}
\label{trace}
&\exists\,\varepsilon \in (0,1):\quad 
A^{-(1+\theta)+2\beta+2(1-\theta)\delta+2\varepsilon}\in\cL^1(H,H)\,,\\
\label{ineq2}
&(1-\theta)\delta<\frac\theta{2}\,.
\end{align}
We further introduce, for every $k\in\N_+$, the function
\begin{equation}
\label{def:gk}
g_{k}:H\to\R\,, \qquad
g_{k}(x):=\scal{B(x)}{e_k}\,, \quad x\in H\,.
\end{equation}
By the definition \eqref{def:gk} and assumption \ref{H6_0}, it follows that $g_k\in C_b^\theta(H)$ for every $k\in\N$ and 
\begin{equation}
\label{est:gk}
\norm{g_k}_{C^0_b(H)}\leq
\norm{g_k}_{C^\theta_b(H)}\leq 
C_B\lambda_k^\beta
\qquad \forall\,k\in\N\,.
\end{equation}
Indeed, by assumptions \ref{H1} and \ref{H4_0}, 
for every $x,y\in H$ one has that 
\[
|g_k(x)|=|\scal{B(x)}{e_k}|
\leq \norm{B}_{C^0_b(H; D(A^{-\beta}))}\norm{e_k}_\beta
= \norm{B}_{C^0_b(H; D(A^{-\beta}))}\norm{A^\beta e_k}_H=\norm{B}_{C^0_b(H; D(A^{-\beta}))}\lambda_k^\beta
\]
and
\begin{align*}
|g_k(x)-g_k(y)|&=
|\scal{B(x)-B(y)}{e_k}|
\leq \norm{B(x)-B(y)}_{-\beta}\norm{e_k}_\beta\\
&=\lambda_k^\beta\norm{B(x)-B(y)}_{-\beta}
\leq [B]_{C_b^\theta(H;D(A^{-\beta}))}\lambda_k^\beta\norm{x-y}_H^\theta\,.
\end{align*}
Note also that for every $n\in\N_+$ and $x\in H$ we have
\[
g_k(x)=\begin{cases}
    &\scal{B_n(x)}{e_k},\qquad k\in\{1,\ldots,n\}, \\
    &\phantom{aaa}0, \phantom{aaaaa}\qquad k>n.
\end{cases}
\]
For $n\in\N_+$ and $k\in\{1,\ldots,n\}$,
we consider now the Kolmogorov equation \eqref{eq:kolm}
with forcing term $g_k$ instead of $g$, namely 
\begin{equation}\label{eq:kolm_k}
\bar c\lambda_k u_{n,k}(x)+L_n u_{n,k}(x)
=\langle B_n(x), Du_{n,k}(x)\rangle +g_k(x)\,, \quad x\in H_n\,,
\end{equation}
where the constant $\bar c$ is given as in \eqref{c}--\eqref{c'}.
We recall that $u_{n,k}\in C^2_b(H_n)$ for all $n\in\N$ and $k\in\{1,\ldots,n\}$.
Thanks to \eqref{ineq2}, 
Proposition~\ref{prop:est} and estimate \eqref{est:gk},
we infer that for every $\gamma\in[0,\beta]$
and $\gamma'\in\left[-\delta, \frac\theta2-\delta(2-\theta)\right)$
there exist constants $\mathfrak L_1, \mathfrak L_2,
\mathfrak L_3>0$, independent of $\bar c$, $n$ and $k$, such that 
\begin{align}
\|u_{n,k}\|_{C^0_b(H_n)} &\leq \mathfrak L_1\lambda_k^\beta\,,\notag\\
\label{est1'}
\|A_n^{\gamma} Du_{n,k}\|_{C^0_b(H_n; H_n)}&\leq\frac{\mathfrak L_2\lambda_k^\beta}{(\bar c\lambda_k)^{\frac{1+\theta}2
-\gamma-(1-\theta)\delta}}\,,\\
\label{est2'}
\|D(A_n^{\gamma'}Du_{n,k})\|_{C^0_b(H_n; \cL(H_n,H_n))}
&\leq \frac{\mathfrak L_3\lambda_k^\beta}
{(\bar c\lambda_k)^{\frac{\theta}{2}-\gamma'-\delta(2-\theta)}}\,,
\end{align}
for every $n\in\N$ and $k\in\{1,\ldots,n\}$.
For every $n\in\N_+$, we define the vector-valued function
\[
u_n:H_n\to H_n\,, \qquad
u_n(x):=\sum_{k=1}^nu_{n,k}(x)e_k\,, \quad x\in H_n\,,
\]
so that $u_n\in C^2_b(H_n;H_n)$ and
\begin{align*}
Du_n(x)[h]&=\sum_{k=1}^n (Du_{n,k}(x), h)_H e_k \quad\forall\,x,h\in H_n\,,\\
D^2u_n(x)[h_1,h_2]&=\sum_{k=1}^n (D^2u_{n,k}(x)h_1, h_2)_H e_k \quad\forall\,x,h_1,h_2\in H_n\,.
\end{align*}

We are now ready to show both Lipschitz-continuity with respect to the initial data and pathwise uniqueness. In the next step, assumption~\ref{H6_0} will be removed.
The It\^o formula for $u_{n,k}(X_n)$ yields,
    for all $t\geq0$, $\P$-almost surely,
    \begin{align*}
        &u_{n,k}(X_n(t)) 
        +\int_0^t L_nu_{n,k}(X_n(s))\,\d s\\
        &= u_{n,k}(P_nx)
        +\int_0^t (B_n(X(s)), Du_{n,k}(X_n(s)))_{H}\,\d s
        +\int_0^t(Du_{n,k}(X_n(s)), A^{-\delta}\,\d W(s))_H\,,
    \end{align*}
    and thanks to the Kolmogorov equation \eqref{eq:kolm_k} we have
    \[
    \int_0^t L_nu_{n,k}(X_n(s))\,\d s
    =\int_0^t (B_n(X_n(s)), Du_{n,k}(X_n(s)))_{H}\,\d s
    +\int_0^t \left[g_k(X_n(s))
    -\bar c\lambda_k u_{n,k}(X_n(s))\right]\,\d s\,.
    \]
    Putting everything together, we infer that 
    \begin{align*}
        \int_0^t g_k(X_n(s))\,\d s&=
        u_{n,k}(P_nx) -u_{n,k}(X_n(t))
        +\int_0^t (B_n(X(s))-B_n(X_n(s)), 
        Du_{n,k}(X_n(s)))_{H}\,\d s\\
        &+\bar c\lambda_k\int_0^t u_{n,k}(X_n(s))\,\d s
        +\int_0^t(Du_{n,k}(X_n(s)), A^{-\delta}\,\d W(s))_H\,.
    \end{align*}
    By multiplying the previous identity
    by $e_k$ and summing over $k=1,\ldots,n$, we get 
    \begin{align*}
        \int_0^t B_n(X_n(s))\,\d s&=
        u_{n}(P_nx) -u_{n}(X_n(t))
        +\int_0^t Du_n(X_n(s))[B_n(X(s))-B_n(X_n(s))] \,\d s\\
        &+\bar c \int_0^t A_n u_{n}(X_n(s))\,\d s
        +\int_0^tDu_{n}(X_n(s))[A^{-\delta}\,\d W(s)]\,,
    \end{align*}
    which yields, together with \eqref{eq-approssimata},
    \begin{align*}
        &X_n(t) + u_{n}(X_n(t))
        +\int_0^tA_n[X_n(s)+u_n(X_n(s))]\,\d s\\
&=P_nx + u_{n}(P_nx) + (\bar c+1)\int_0^t A_n u_{n}(X_n(s))\,\d s
+\int_0^t Du_n(X_n(s))[B_n(X(s))-B_n(X_n(s))] \,\d s\\
&\qquad+ \int_0^t[B_n(X(s))-B_n(X_n(s))]\,\d s
+\int_0^tA_n^{-\delta}\,\d W(s)
+\int_0^tDu_{n}(X_n(s))[A^{-\delta}\,\d W(s)]\,.
    \end{align*}
Now, passing to the mild formulation, we obtain 
\begin{align*}
X_n(t)&= e^{-tA_n}(P_nx+u_n(P_nx))-u_{n}(X_n(t))+(\overline{c}+1)A_n\int_0e^{-(t-s)A_n}u_n(X_n(s))\,\d s\\
&+\int_0^te^{-(t-s)A_n}Du_n(X_n(s))\left[B_n(X(s))-B_n(X_n(s))\right]\,\d s\\
&+\int_0^te^{-(t-s)A_n}\left[B_n(X(s))-B_n(X_n(s))\right]\,\d s\\
&+\int_0^te^{-(t-s)A_n}Du_n(X_n(s))\left[A^{-\delta}\d W(s)\right]
+\int_0^te^{-(t-s)A_n}A_n^{-\delta}\d W(s).
\end{align*}
By arguing in the same way for $Y$ we infer analogously that 
\begin{align*}
Y_n(t)&= e^{-tA_n}(P_ny+u_n(P_ny))-u_{n}(Y_n(t))+(\overline{c}+1)A_n\int_0e^{-(t-s)A_n}u_n(Y_n(s))\,\d s\\
&+\int_0^te^{-(t-s)A_n}Du_n(Y_n(s))\left[B_n(Y(s))-B_n(Y_n(s))\right]\,\d s\\
&+\int_0^te^{-(t-s)A_n}\left[B_n(Y(s))-B_n(Y_n(s))\right]\,\d s\\
&+\int_0^te^{-(t-s)A_n}Du_n(Y_n(s))\left[A^{-\delta}dW(s)\right]+\int_0^te^{-(t-s)A_n}A_n^{-\delta}\d W(s).
\end{align*}
By taking the difference $X_n-Y_n$
we finally deduce that 
 \begin{equation}
    \label{diff}
        X_n(t) - Y_n(t) = I_0+ \sum_{j=1}^7I_j(t)
\end{equation}
for every $t\in[0,T]$, where
\begin{align*}
I_0&:=e^{-tA_n}(P_n(x-y) + u_n(P_nx)-u_n(P_ny))\,,\\
I_1(t)&:=u_{n}(Y_n(t))- u_{n}(X_n(t))\,,\\
I_2(t)&:= (\bar c+1)A_n\int_0^t  e^{-(t-s)A_n}
[u_{n}(X_n(s))-u_{n}(Y_n(s))]\,\d s\,,\\
I_3(t)&:=\int_0^t e^{-(t-s)A_n} Du_n(X_n(s))[B_n(X(s))-B_n(X_n(s))] \,\d s\,,\\
I_4(t)&:=-\int_0^t e^{-(t-s)A_n} Du_n(Y_n(s))[B_n(Y_n(s))-B_n(Y(s))] \,\d s\,,\\
I_5(t)&:=\int_0^te^{-(t-s)A_n}[B_n(X(s))-B_n(X_n(s))]\,\d s\,,\\
I_6(t)&:=-\int_0^te^{-(t-s)A_n}[B_n(Y_n(s))-B_n(Y(s))]\,\d s \,,\\
I_7(t)&:=\int_0^te^{-(t-s)A_n}
(Du_{n}(X_n(s))-Du_{n}(Y_n(s)))[A^{-\delta}\,\d W(s)].
\end{align*}
We estimate now the left-hand side of \eqref{diff}
in the space $C^0([0,T]; L^2(\Omega; H))$
by analyzing all the terms $I_0,\ldots,I_7$ separately.
Let $T_0\in(0,T]$ be fixed, whose value will be specified later.

For the term $I_0$, we have 
\begin{align*}
I_0&=e^{-tA_n}\bigg(P_n(x-y)+
\int_0^1Du_n(P_ny+rP_n(x-y))[P_n(x-y)]\,\d r\bigg)\\
&=e^{-tA_n}\bigg(P_n(x-y)+\sum_{k=1}^n\int_0^1(Du_{n,k}(P_ny+rP_n(x-y)),  P_n(x-y))_He_k\,\d r\bigg)\,,
\end{align*}
so that, by using \eqref{est1'} with $\gamma=0$
and the contraction property of $e^{-tA_n}$ and $P_n$, we infer that 
\begin{align}
\label{I0}
\|I_0\|^2_{C^0([0,T_0]; L^2(\Omega; H))}
\leq \left[1+\frac{\mathfrak L_2^2}
{{\bar c}^{1+\theta
-2(1-\theta)\delta}}
\sum_{k=1}^n
\frac1{\lambda_k^{1+\theta
-2\beta-2(1-\theta)\delta}}\right]\|x-y\|_H^2\,.
\end{align}
Analogously, for the term $I_1$ we have
\begin{align*}
    I_1(t)&=\int_0^1Du_n(X_n(t)+r(Y_n(t)-X_n(t)))[Y_n(t)-X_n(t)]\,\d r\\
    &=\sum_{k=1}^n\int_0^1(Du_{n,k}(X_n(t)+r(Y_n(t)-X_n(t))), 
    Y_n(t)-X_n(t))_He_k\,\d r\,,
\end{align*}
so that, by using \eqref{est1'} with $\gamma=0$, we infer that 
\begin{align*}
    \|I_1\|^2_{C^0([0,T_0]; L^2(\Omega; H))}
    \leq \frac{\mathfrak L_2^2}{\bar c^{1+\theta-2(1-\theta)\delta}}
    \left(\sum_{k=1}^n
    \frac1{\lambda_k^{1+\theta
        -2\beta-2(1-\theta)\delta}}\right)
        \|X_n-Y_n\|_{C^0([0,T_0]; L^2(\Omega; H))}^2\,.
\end{align*}
For the term $I_2$, for every $t\in[0,T_0]$, by \eqref{est1'} with $\gamma=0$ and 
the H\"older inequality we have
\begin{align*}
&\|I_2(t)\|^2_{L^2(\Omega;H)}\\
&=(\bar c +1)^2\sum_{k=1}^{n}\mathbb{E}
\left(\int_0^te^{-(t-s)A_n}A_n
\left[u_n(X_n(s))-u_n(Y_n(s))\right]\, \d s,e_k\right)_H^2\\
&=(\bar c +1)^2\sum_{k=1}^{n}
\mathbb{E}\left(\int_0^te^{-(t-s)\lambda_k}\lambda_k\left[u_{n,k}(X_n(s))-u_{n,k}(Y_n(s))\right]\,\d s\right)^2\\
&\leq \frac{\mathfrak{L}^2_2(\bar c +1)^2}{\bar{c}^{1+\theta-2(1-\theta)\delta}}
\sum_{k=1}^{n}\mathbb{E}\left(\int_0^te^{-(t-s)\lambda_k}\lambda_k\frac{1}{\lambda_k^{\frac{1+\theta}{2}-\beta-(1-\theta)\delta}}\norm{X_n(s)-Y_n(s)}_H\,\d s\right)^2\\
&\leq \frac{\mathfrak{L}^2_2(\bar c +1)^2}{\bar{c}^{1+\theta-2(1-\theta)\delta}}\sum_{k=1}^{n}
\frac{1}{\lambda_k^{\theta-1-2\beta-2(1-\theta)\delta}}\mathbb{E}
\left(\int_0^te^{-\frac{(t-s)\lambda_k}{2}}e^{-\frac{(t-s)\lambda_k}{2}}\norm{X_n(s)-Y_n(s)}_H\,\d s\right)^2\\
&\leq \frac{\mathfrak{L}^2_2(\bar c +1)^2}{\bar{c}^{1+\theta-2(1-\theta)\delta}}\sum_{k=1}^{n}
\frac{1}{\lambda_k^{\theta-1-2\beta-2(1-\theta)\delta}}
\int_0^te^{-(t-s)\lambda_k}\,\d s
\int_0^te^{-(t-s)\lambda_k}\mathbb{E}\norm{X_n(s)-Y_n(s)}^2_H\, \d s\,.
  \end{align*}  
Noting that for every $\varepsilon\in(0,1)$, by the H\"older inequality with
exponents $\frac{1}{1-\varepsilon}$ and $\frac{1}{\varepsilon}$,
for every $t\in [0,T]$ we have
\[
\int_0^te^{-(t-s)\lambda_k}\,\d s\leq T^{\varepsilon}\lambda_k^{\varepsilon-1}\,,
\]
we infer 
\begin{align*}
\|I_2\|^2_{C^0([0,T_0]; L^2(\Omega; H))}&\leq \frac{T_0^{2\varepsilon}\mathfrak{L}^2_2(\bar c +1)^2}{\bar{c}^{1+\theta-2(1-\theta)\delta}}
\left(\sum_{k=1}^{n}\frac{1}{\lambda_k^{1+\theta-2\beta-2(1-\theta)\delta-2\varepsilon}}\right)
\norm{X_n-Y_n}^2_{C^0([0,T]; L^2(\Omega; H))}\,.
  \end{align*}
For the term $I_3$, by \eqref{est1'} with $\gamma=\beta$ and assumption \ref{H6_0} for every $t\in[0,T_0]$ we get
\begin{align*}
&\|I_3(t)\|^2_{L^2(\Omega; H)}\\
&=
\sum_{k=1}^n\mathbb{E}
\left(\int_0^te^{-(t-s)A_n}Du_n(X_n(s))\left[B_n(X(s))-B_n(X_n(s))\right]\,\d s, e_k\right)_H^2\\
&=\sum_{k=1}^n\mathbb{E}\left(\int_0^te^{-(t-s)\lambda_k}
\left(Du_{n,k}(X_n(s)), B_n(X(s))-B_n(X_n(s))\right)_H\,\d s\right)^2\\
    &\leq \frac{C_B^2\mathfrak L_2^2}{\bar c^{1+\theta
        -2\beta-2(1-\theta)\delta}}\sum_{k=1}^n\frac{1}{\lambda_k^{1+\theta
        -4\beta-2(1-\theta)\delta}}
        \left(\int_0^t e^{-(t-s)\lambda_k}\mathbb{E}\norm{X_n(s)-X(s)}_H^\theta\,\d s\right)^2\\
    &\leq \frac{C_B^2\mathfrak L_2^2}{\bar c^{1+\theta
        -2\beta-2(1-\theta)\delta}}\sum_{k=1}^n\frac{1}{\lambda_k^{3+\theta
        -4\beta-2(1-\theta)\delta}}
        \sup_{s\in[0,T]}
        \mathbb{E}\norm{X_n(s)-X(s)}_H^{2\theta}\,,
\end{align*}
so that 
\begin{align*}
\|I_3\|^2_{C^0([0,T_0]; L^2(\Omega; H))}\leq 
\frac{C_B^2\mathfrak L_2^2}{\bar c^{1+\theta
        -2\beta-2(1-\theta)\delta}}
        \left(\sum_{k=1}^{n}\frac{1}{\lambda_k^{3+\theta
        -4\beta-2(1-\theta)\delta}}\right)
        \norm{X_n-X}_{C^0([0,T_0]; L^{2\theta}(\Omega; H))}^{2\theta}\,.
\end{align*}
In the same way, for $I_4$ we get
\begin{align*}
\|I_4\|^2_{C^0([0,T_0]; L^2(\Omega; H))}\leq 
\frac{C_B^2\mathfrak L_2^2}{\bar c^{1+\theta
        -2\beta-2(1-\theta)\delta}}
        \left(\sum_{k=1}^{n}\frac{1}{\lambda_k^{3+\theta
        -4\beta-2(1-\theta)\delta}}\right)
        \norm{Y_n-Y}_{C^0([0,T_0]; L^{2\theta}(\Omega; H))}^{2\theta}\,.
\end{align*}
For $I_5$, for every $t\in[0,T_0]$ we have
\begin{align*}
    I_5(t)
    =A_n^\beta\int_0^te^{-(t-s)A_n}
    A_n^{-\beta}[B_n(X(s))-B_n(X_n(s))]\,\d s\,,
\end{align*}
By \eqref{semiS} (with $\gamma=\beta$), \eqref{est:Bn2} and the H\"older inequality we get
\begin{align*}
\mathbb{E}\norm{I_5(t)}_H^2
    &\leq \mathbb{E}\left[\left(\int_0^t\norm{A_n^\beta e^{-(t-s)A_n}
    A_n^{-\beta}[B_n(X(s))-B_n(X_n(s))]}_H\,\d s\right)^2\right]\\
    &\leq \mathbb{E}\left[M_\beta^2C_B^2\left(\int_0^t\frac{1}{(t-s)^{\frac\beta2}}\frac{1}{(t-s)^{\frac\beta2}}\norm{X(s)-X_n(s)}^\theta_H\,\d s\right)^2\right]\\
    &\leq \mathbb{E}\left[M_\beta^2C_B^2\left(\int_0^t\frac{1}{(t-s)^{\beta}}\d s\right)\left(\int_0^t\frac{1}{(t-s)^{\beta}}\norm{X(s)-X_n(s)}^{2\theta}_H\,\d s\right)\right]\\
    &\leq \frac{M_\beta^2C_B^2 T^{1-\beta}}{1-\beta}
    \int_0^t\frac{1}{(t-s)^{\beta}}\mathbb{E}
    \norm{X(s)-X_n(s)}^{2\theta}_H\,\d s\\
     &\leq \frac{M_\beta^2C_B^2 T^{2-2\beta}}{(1-\beta)^2}
     \sup_{t\in [0,T]}\mathbb{E}\norm{X(t)-X_n(t)}^{2\theta}_H\,,
\end{align*}
so that
\begin{align*}
\sup_{t\in [0,T_0]}\mathbb{E}\norm{I_5(t)}_H^2
\leq \frac{M_\beta^2C_B^2 T^{2-2\beta}}{(1-\beta)^2}
     \sup_{t\in [0,T_0]}\mathbb{E}\norm{X(t)-X_n(t)}^{2\theta}_H\,.
\end{align*}
Analogously, we obtain for $I_6$ that 
\begin{align*}
\sup_{t\in [0,T_0]}\mathbb{E}\norm{I_6(t)}_H^2\leq \frac{M_\beta^2C_B^2 T^{2-2\beta}}{(1-\beta)^2}
     \sup_{t\in [0,T_0]}\mathbb{E}\norm{Y(t)-Y_n(t)}^{2\theta}_H\,.
\end{align*}
Eventually, for $I_7$, by the It\^o isometry for every $t\in[0,T_0]$ we get
\begin{align*}
    \|I_7(t)\|^2_{L^2(\Omega; H)}&=
    \E\sum_{k=1}^n\left(\int_0^te^{-(t-s)A_n}\left(Du_n(X_n(s))-Du_n(Y_n(s)\right)[A_n^{-\delta}\,\d W(s)], e_k\right)^2_H\\
&=\sum_{k=1}^n\mathbb{E}\left(\int_0^te^{-(t-s)\lambda_k}\left(Du_{n,k}(X_n(s))-Du_{n,k}(Y_n(s)), A_n^{-\delta}\,\d W(s)\right)_H\right)^2\\
&=\sum_{k=1}^n\sum_{j=1}^{n}\int_0^te^{-2(t-s)\lambda_k}\mathbb{E}
\left(Du_{n,k}(X_n(s))-Du_{n,k}(Y_n(s)),A_n^{-\delta}e_j\right)_H^2\,\d s\\
&=\sum_{k=1}^n\int_0^te^{-2(t-s)\lambda_k}\mathbb{E}
\norm{A_n^{-\delta}(Du_{n,k}(X_n(s))-Du_{n,k}(Y_n(s)))}_H^2\,\d s\,.
\end{align*}
By the estimate \eqref{est2'} with $\gamma'=-\delta$, we obtain
\begin{align*}
\|I_7(t)\|^2_{L^2(\Omega; H)}&\leq\frac{\mathfrak L_3^2}
{\bar c^{\theta-2(1-\theta)\delta}}\sup_{s\in[0,T]}\mathbb{E}\norm{X_n(s)-Y_n(s)}_H^2\sum_{k=1}^n\frac{1}{\lambda_k^{\theta-2\beta-2(1-\theta)\delta}}\int_0^te^{-2(t-s)\lambda_k}\,\d s\notag\\
&\leq\frac{\mathfrak L_3^2}
{2\bar c^{\theta-2(1-\theta)\delta}}\sup_{s\in[0,T]}\mathbb{E}\norm{X_n(s)-Y_n(s)}_H^2\sum_{k=1}^n\frac{1}{\lambda_k^{1+\theta-2\beta-2(1-\theta)\delta}}
\end{align*}
for every $t\in[0,T_0]$,
so that 
\begin{align}
\label{I7}
\|I_7\|^2_{C^0([0,T_0]; L^2(\Omega; H))}
\leq\frac{\mathfrak L_3^2}
{2\bar c^{\theta-2(1-\theta)\delta}}
\left(\sum_{k=1}^n\frac{1}{\lambda_k^{1+\theta-2\beta-2(1-\theta)\delta}}\right)
\norm{X_n-Y_n}^2_{C^0([0,T_0]; L^2(\Omega; H))}\,.
\end{align}
Now, choosing $\varepsilon \in (0,1)$ satisfying \eqref{trace} we have 
\[
  \sum_{k=1}^n
    \frac1{\lambda_k^{1+\theta
      -2\beta-2(1-\theta)\delta-2\varepsilon}}\leq
  \sum_{k=1}^\infty
    \frac1{\lambda_k^{1+\theta
      -2\beta-2(1-\theta)\delta-2\varepsilon}} =: s_1 <+\infty\,.
\]
Moreover, since the exponent $3+\theta-4\beta-2(1-\theta)\delta$
is greater than $1+\theta -2\beta-2(1-\theta)\delta-2\varepsilon$, 
it also holds that 
\[
s_2:= \sum_{k=1}^\infty\frac{1}{\lambda_k^{3+\theta
-4\beta-2(1-\theta)\delta}}<+\infty\,.
\]
Hence, by taking \eqref{I0}--\eqref{I7} into account, we infer that
\begin{align*}
    &\|X_n-Y_n\|^2_{C^0([0,T_0]; L^2(\Omega; H))}\\
    &\leq \left[1+\frac{\mathfrak L_2^2 s_1}
{{\bar c}^{1+\theta
-2(1-\theta)\delta}}
    \right]\|x-y\|_H^2\\
    &\quad+\left(
    \frac{\mathfrak L_2^2s_1}{\bar c^{1+\theta-2(1-\theta)\delta}}
    +\frac{T^{2\varepsilon}\mathfrak{L}^2_2(\bar c +1)^2s_1}{\bar{c}^{1+\theta-2(1-\theta)\delta}}
    +\frac{\mathfrak L_3^2s_1}
        {2\bar c^{\theta-2(1-\theta)\delta}}
    \right)\|X_n-Y_n\|^2_{C^0([0,T_0]; L^2(\Omega; H))}\\
    &\quad+
    \left(\frac{C_B^2\mathfrak L_3^2s_2}{\bar c^{1+\theta
        -2\beta-2(1-\theta)\delta}}+ 
        \frac{M_\beta^2C_B^2 T^{2-2\beta}}{(1-\beta)^2}\right)
        \left[\norm{X_n-X}_{C^0([0,T_0]; L^{2\theta}(\Omega; H))}^{2\theta}
        +\norm{Y_n-Y}_{C^0([0,T_0]; L^{2\theta}(\Omega; H))}^{2\theta}
        \right].
\end{align*}
Now, we fix $\bar c$ satisfying \eqref{c}--\eqref{c'} large enough and $T_0\in(0,T]$ small enough so that 
\[
  \frac{\mathfrak L_2^2s_1}{\bar c^{1+\theta-2(1-\theta)\delta}}+
  \frac{T_0^{2\varepsilon}\mathfrak{L}^2_2(\bar c +1)^2s_1}{\bar{c}^{1+\theta-2(1-\theta)\delta}}+
  \frac{\mathfrak L_3^2s_1}
        {2\bar c^{\theta-2(1-\theta)\delta}}<\frac12\,.
\]
Hence, by letting $n\to\infty$ and by exploiting Proposition~\ref{prop:Xn}
we get 
\begin{align*}
    &\|X-Y\|^2_{C^0([0,T_0]; L^2(\Omega; H))}
    \leq \left[1+\frac{\mathfrak L_2^2 s_1}
{{\bar c}^{1+\theta
-2(1-\theta)\delta}}
    \right]\|x-y\|_H^2
    +\frac12\|X-Y\|^2_{C^0([0,T_0]; L^2(\Omega; H))}\,.
\end{align*}
By employing now a standard pathching argument 
on sub-intervals $[T_0,2T_0]$, $[2T_0,3T_0]$, \ldots, until the final time $T$, the continuous dependence holds
also on the whole interval $[0,T]$.
Finally, choosing $x=y$ yields $X(t)=Y(t)$ in $H$ 
$\P$-almost surely, for every $t\in[0,T]$. Since $X$ and $Y$ have continuous 
trajectories with values in $H$, this shows also 
\[
  \P\left(X(t)=Y(t),\;\;\forall\,t\in[0,T]\right)=1\,.
\]
This proves Theorems~\ref{th:2}--\ref{th:3}
when $\alpha=0$ and in the bounded case \ref{H6_0}.

\underline{\sc Step 2.} 
We show here that by removing the boundedness 
assumption \ref{H6_0} 
only pathwise uniqueness holds.
For every $N\in\N$ we define the stopping time
\[
\tau_N:=T\wedge \inf\{t\in [0,T]\, :\, \norm{X(t)}_H>N\}\wedge \inf\{t\in [0,T]\, :\, \norm{Y(t)}_H>N\}.
\]
It is not difficult to show that $\tau_N\nearrow T$ $\mathbb{P}$-almost surely as $N\rightarrow\infty$. Let $\Pi_N$ be the orthogonal projection on the closed ball of $H$ with center in $0$ and radius $N$ and let $\overline B_N:H\rightarrow D(A^{-\beta})$ be defined by 
\[
\overline B_N(x):=B(\Pi_Nx)\,,\quad x\in H\,.
\]
Since $B \in C^\theta_{{\rm loc}}(H;D(A^{-\beta}))$
by \ref{H4_0}, it holds that $\overline B_N \in C^\theta_{b}(H;D(A^{-\beta}))$ for every $N\in\N$. 
Let now $N\in\mathbb N$ be fixed.
By following the localisation-extension arguments
in \cite[Section 4.2]{BOS} (with $\alpha=0$)
and exploiting the classical Yamada-Watanabe strong existence results (see \cite{On04}),
there exist two weak mild solutions
$\widetilde{X}_N$ and $\widetilde{Y}_N$
to \eqref{SDE} with $B$ replaced by $B_N$, 
defined on the same probability space $(\Omega, \mathcal F, \P)$,
such that
\begin{equation*}
X=\widetilde{X}_N\,,\quad  
Y=\widetilde{Y}_N\,,\qquad\text{in } [\![0,\tau_N]\!]\,.
\end{equation*}
Since $B_N \in C^\theta_{b}(H;D(A^{-\beta}))$ for every $N\in\N$, by the pathwise uniqueness result of 
Theorem~\ref{th:2} proved in {\sc Step 1}
in the bounded case, we infer that 
\begin{align*}
\mathbb P(\widetilde X_N(t)=\widetilde Y_N(t),
\quad\forall\,t\in[0,T])=1\,,
\end{align*}
so that 
\[
  X=Y \quad\text{in } [\![0,\tau_N]\!] \quad\forall\,N\in\mathbb  N\,.
\]
By setting $\Omega_N:=\{\omega\in \Omega: X(t)=Y(t),\;\,\forall\,t\in[0,\tau_N(\omega)]\}$, we have then that 
$\P(\Omega_N)=1$ for every $N\in\mathbb N$.
Since $\Omega_N\supseteq \Omega_{N+1}$ for every $N\in\N$,  we get 
\[
\Omega_N\downarrow \{\omega\in\Omega\, :\, X(\omega,t)=Y(\omega,t),\quad \forall\, t\in [0,T]\}
\]
and we conclude by the continuity of $\mathbb{P}$.
This proves the pathwise uniqueness result of Theorem~\ref{th:2}
when $\alpha=0$, without any boundedness assumption.

\underline{\sc Step 3.} We consider here the case $\alpha>0$
and initial data $x,y\in D(A^\alpha)$. 
We perform exactly the same construction presented in 
{\sc Step 3} of the proof of Theorem~\ref{th:1}.
We only note that assumptions \eqref{trace1}--\eqref{ineq2_al}
ensure that \eqref{trace}--\eqref{ineq2} are satisfied 
by the new coefficients $\tilde\delta=\delta-\alpha$ and
$\tilde\beta=\alpha+\beta$. This proves, by proceeding 
as in {\sc Step 1--2},  
the pathwise uniqueness result of Theorem~\ref{th:2} with initial data in $D(A^\alpha)$ . If $B$ is bounded, by proceeding 
as in {\sc Step 1}
the continuous dependence \eqref{cont_dep_al} of Theorem~\ref{th:3} is also proved.

\underline{\sc Step 4.} We consider here the case $\alpha>0$, $B$ bounded 
and initial data $x,y\in H$. 
Once again, we perform the same construction presented in 
{\sc Step 3}: due to the less regular initial data, 
the processes $(X, \tilde W)$ and $(Y, \tilde W)$ are now weak mild solutions to \eqref{SDE}
in the sense of Definition~\ref{def-sol-app} with the choices
$\mathcal H=D(A^\alpha)$, $\mathcal A=A_{|D(A^\alpha)}$,
$\tilde \alpha=\alpha$, $\tilde \beta =\alpha+\beta$, and
$\tilde \delta=\delta-\alpha$.
Note that assumptions \ref{A0}--\ref{A5} are implied by 
\ref{H0}--\ref{H5} and by the boundedness of $B$.
Moreover, \eqref{trace_app}--\eqref{ineq_alfa_app} are
implied by \eqref{trace1}, \eqref{ineq3_al},
and the fact that $\beta\geq0$.
The  pathwise uniqueness result with initial data in $H$ of Theorem~\ref{th:2}, as well as the continuous dependence \eqref{cont_dep_al2} of Theorem~\ref{th:3}, follow then
by a direct application of Proposition~\ref{th:uniq_path-app}.
\end{proof}

\section{Examples}
\label{Exam}
We present here some applications of the main theorems
to some classes of explicit examples.
We also briefly discuss the extensions with respect to the 
available state of the art.

Throughout this section, let $\OO$ be a smooth bounded domain in $\mathbb R^d$, $d\geq1$. We denote by $A_D$ and $A_N$
the negative Laplace operators on $L^2(\OO)$, 
with homogeneous Dirichlet and Neumann 
boundary conditions, respectively. 

Preliminarily, we note that if $A$ is either $A_D$ or $\operatorname{I}+A_N$,
it holds that $A^{-\sigma}\in\cL^1(L^2(\OO), L^2(\OO))$
if and only if $\sigma>\frac d2$. Indeed, given a complete orthonormal system $(e_k)_k$ of $L^2(\OO)$
made of eigenvectors of $A$ with respective eigenvalues 
$(\lambda_k)_k$, it holds that $\lambda_k\sim k^{\frac2d}$
as $k\to\infty$, so that
\begin{align*}
    \sum_{k=1}^\infty(A^{-\sigma}e_k, e_k)_{L^2(\OO)}
    =\sum_{k=1}^\infty\lambda_k^{-\sigma}
    \sim\sum_{k=1}^\infty k^{-\frac{2\sigma}d}<+\infty
\end{align*}
if and only if $\frac{2\sigma}d>1$.

Furthermore, for $s\geq0$ we set
\[
V_{2s}:=D(A_D^s)\,,
\]
while for $r<0$ we set
\[
  V_{2r}:=(V_{-2r})^*\,.
\]
Let us recall that the spaces $V_{2s}$, $s\in\mathbb R$, 
can be characterised in terms of suitable Sobolev spaces.
For example, one has $V_2=D(A_D)=H^2(\OO)\cap H^1_0(\OO)$ and
\[
  V_{2s}=
  \begin{cases}
H^{2s}(\OO)\quad&\text{if } s\in\left[0,\frac14\right)\,,\\
H^{\frac12}_{00}(\OO)\quad&\text{if } s=\frac14\,,\\
H^{2s}_0(\OO) \quad&\text{if } s\in\left(\frac14,\frac12\right]\, ,
  \end{cases}
\]
where $H^{\frac12}_{00}(\OO)$ is the Lions--Magenes space. This follows from classical interpolation 
theory in Sobolev spaces, see for instance~\cite[Chap.~1]{LM}.
We recall a technical result on multiplication of functions and distributions.
\begin{lemma}\label{Lemma-immersioni}
Let $\eta\in\left[\frac14,\frac12\right]$
and $\sigma\geq \frac12-\eta$ be such that $2\eta+\sigma>\frac{d+2}{4}$. Then, for every $f\in V_{2\eta}$ and $g\in V_{2\eta-1}$, 
the product $fg$ is well-defined as an element of 
$V_{-2\sigma}$, and
there exists a positive constant $C$ such that 
\begin{align}\label{Besov-mol_n}
\|fg\|_{V_{-2\sigma}}\leq C\|f\|_{V_{2\eta}}\|g\|_{V_{2\eta-1}}
\qquad   \forall\,f\in V_{2\eta}\,,\quad\forall\, g\in V_{2\eta-1}\,.  
\end{align}
\end{lemma}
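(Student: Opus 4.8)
The plan is to reduce \eqref{Besov-mol_n} to a \emph{forward} multiplication estimate via duality, and then to invoke a bounded-domain version of the classical Sobolev product theorem, whose hypotheses coincide exactly with the three assumptions of the lemma. Throughout I identify, via the stated characterisation, the order-$r$ space $V_r$ with the Sobolev space $H^{r}(\OO)$ (respectively $H^r_0(\OO)$ or $H^{\frac12}_{00}(\OO)$), and I recall the defining duality $V_{2\eta-1}=(V_{1-2\eta})^*$, valid since $2\eta-1\leq 0$.

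First I would set up the duality. Since $\eta\leq\frac12$ one has $\sigma\geq\frac12-\eta\geq0$, so $V_{2\sigma}$ is a space of functions and $V_{-2\sigma}=(V_{2\sigma})^*$. For smooth $f,g$ and $\phi\in V_{2\sigma}$ the identity $\int_\OO fg\,\phi=\langle g,f\phi\rangle$ holds, the right-hand pairing being the duality between $V_{2\eta-1}$ and $V_{1-2\eta}$. Hence, to define $fg$ as an element of $V_{-2\sigma}$ and to prove \eqref{Besov-mol_n}, it suffices, by the duality inequality $|\langle g,\psi\rangle|\leq\norm{g}_{V_{2\eta-1}}\norm{\psi}_{V_{1-2\eta}}$, to establish the forward bound
\[
\norm{f\phi}_{V_{1-2\eta}}\leq C\,\norm{f}_{V_{2\eta}}\,\norm{\phi}_{V_{2\sigma}}\,.
\]
Indeed, the functional $\phi\mapsto\langle g,f\phi\rangle$ is then bounded on $V_{2\sigma}$ with norm $\leq C\norm{f}_{V_{2\eta}}\norm{g}_{V_{2\eta-1}}$; it represents $fg$ for smooth data and extends to all of $V_{2\eta}\times V_{2\eta-1}$ by density, yielding both the well-definedness and the claimed estimate.

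Next I would prove the forward estimate. Writing the Sobolev orders $a:=2\eta\in[\frac12,1]$, $b:=2\sigma\geq0$ and $c:=1-2\eta\in[0,\frac12]$, the target reads $H^a\cdot H^b\hookrightarrow H^c$, and the hypotheses translate \emph{exactly} into the classical conditions for Sobolev multiplication: $c\leq a$ is $\eta\geq\frac14$, $c\leq b$ is $\sigma\geq\frac12-\eta$, and, since $a+b-c=4\eta+2\sigma-1$, the budget condition $a+b-c>\frac d2$ is precisely $2\eta+\sigma>\frac{d+2}4$. For $\eta\in(\frac14,\frac12]$ one has $c=1-2\eta<\frac12$, so $V_{1-2\eta}=H^{1-2\eta}(\OO)$ carries no boundary constraint; I would then deduce the bound by extending $f,\phi$ to $\R^d$ through a bounded extension operator, applying the standard product estimate $H^a(\R^d)\cdot H^b(\R^d)\hookrightarrow H^c(\R^d)$ (obtained, e.g., via a Bony paraproduct decomposition), and restricting the product back to $\OO$.

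Finally, the delicate point is the endpoint $\eta=\frac14$, where $V_{1-2\eta}=H^{\frac12}_{00}(\OO)$ is the Lions--Magenes space, strictly smaller than $H^{\frac12}(\OO)$: here one must show $f\phi\in H^{\frac12}_{00}(\OO)$, not merely $f\phi\in H^{\frac12}(\OO)$, and I expect this to be the main obstacle. At this endpoint the strict budget condition forces $2\sigma>\frac d2$, so Sobolev embedding gives $\phi\in C(\overline\OO)\cap L^\infty(\OO)$. Using the weighted characterisation $\norm{u}_{H^{\frac12}_{00}}^2\simeq\norm{u}_{H^{\frac12}(\OO)}^2+\int_\OO d(x)^{-1}|u(x)|^2\,\d x$, with $d(x):=\operatorname{dist}(x,\partial\OO)$, the $H^{\frac12}$-part of $f\phi$ is controlled by the extension argument above (now with $a=c=\frac12$ and $b>\frac d2$), while the weighted part is handled by
\[
\int_\OO d(x)^{-1}|f(x)\phi(x)|^2\,\d x\leq\norm{\phi}_{L^\infty(\OO)}^2\int_\OO d(x)^{-1}|f(x)|^2\,\d x\leq\norm{\phi}_{L^\infty(\OO)}^2\,\norm{f}_{H^{\frac12}_{00}}^2\,,
\]
since $f\in V_{\frac12}=H^{\frac12}_{00}(\OO)$. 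Combining the two contributions gives $f\phi\in H^{\frac12}_{00}(\OO)$ with the required bound, which closes the endpoint case and completes the proof.
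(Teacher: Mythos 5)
Your proposal is correct, and its skeleton coincides with the paper's: both reduce \eqref{Besov-mol_n} by duality to the forward bound $\|f\phi\|_{V_{1-2\eta}}\leq C\|f\|_{V_{2\eta}}\|\phi\|_{V_{2\sigma}}$ for smooth data, prove that bound by a Sobolev multiplication theorem whose hypotheses ($c\leq a$, $c\leq b$, $a+b-c>\tfrac d2$ with $a=2\eta$, $b=2\sigma$, $c=1-2\eta$) are exactly the assumptions of the lemma, and conclude by density of $C^\infty_c(\OO)$ in $V_{2\eta}$ and $V_{2\eta-1}$. Where you genuinely diverge is the endpoint $\eta=\tfrac14$, which is indeed the delicate case. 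The paper handles it with the zero-extension operator $L_0$: since $L_0$ is bounded from $H^{\frac12}_{00}(\OO)$ into $H^{\frac12}(\mathbb R^d)$ and from $V_{2\sigma}$ into $H^{2\sigma}(\mathbb R^d)$, and since $L_0(fw)=(L_0f)(L_0w)$ pointwise, the $H^{\frac12}_{00}$-norm of $fw$ \emph{is} the $H^{\frac12}(\mathbb R^d)$-norm of the product of the extensions, so a single whole-space multiplication estimate (the paper cites Behzadan--Holst, Thm.~7.3) finishes the case in one stroke. You instead invoke the Lions--Magenes weighted characterisation $\|u\|^2_{H^{1/2}_{00}}\simeq\|u\|^2_{H^{1/2}(\OO)}+\int_\OO d(x)^{-1}|u|^2\,\d x$, controlling the unweighted part by a generic (Stein) extension plus the $\mathbb R^d$ product estimate, and the Hardy-weight part by the $L^\infty$ bound on $\phi$ (available since the budget condition forces $2\sigma>\tfrac d2$) together with $f\in H^{\frac12}_{00}$. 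Both arguments are sound; the paper's identity $L_0(fw)=(L_0f)(L_0w)$ is slicker, while your route has the mild advantage of never requiring the \emph{zero}-extension to be bounded on $V_{2\sigma}$ (only a standard extension operator), at the price of invoking the weighted characterisation of the Lions--Magenes space as an extra ingredient. In the non-endpoint case $\eta\in(\tfrac14,\tfrac12]$ the two proofs differ only cosmetically: the paper cites a multiplication theorem stated directly on the domain (Behzadan--Holst, Thm.~7.4), whereas you re-derive it by extension, paraproduct on $\mathbb R^d$, and restriction, which is how such domain statements are proven anyway.
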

\begin{proof}
Let $w\in C^\infty_c(\OO)$ be an arbitrary test function.
Since $C^\infty_c(\OO)$ is dense both in $V_{2\eta}$
and $V_{2\eta-1}$, it is not restrictive to assume first that $f,g\in C^\infty_c(\OO)$. We have that
\begin{align*}
|\langle fg,w\rangle_{V_{-2\sigma}, V_{2\sigma}}|
=\left|\int_\OO fgw\right|
= |\langle g,fw\rangle_{V_{2\eta-1}, V_{1-2\eta}}|
\leq \|g\|_{V_{2\eta-1}}\|fw\|_{V_{1-2\eta}}\,.
\end{align*}
We prove now that there exists $C>0$, 
independent of $f,w$, such that
\begin{align}\label{fw}
\|fw\|_{V_{1-2\eta}}
\leq C\|f\|_{V_{2\eta}}\|w\|_{V_{2\sigma}}\,.
\end{align}
If $\eta\in(\frac14, \frac12]$, then one has $V_{1-2\eta}=H^{1-2\eta}(\OO)$ and $V_{2\eta}=H^{2\eta}_0(\OO)$.
Hence, recalling also that the $H^{2\eta}_0(\OO)$-norm
is equivalent to the restriction of the $H^{2\eta}(\OO)$-norm
to $H^{2\eta}_0(\OO)$, 
one can apply the pointwise multiplication result in 
\cite[Thm.~7.4]{Beh-Hol2021} 
with $s_1=2\eta$, $s_2=2\sigma$, $s=1-2\eta$ and $p_1=p_2=p=2$:
this proves exactly \eqref{fw} for some $C$ independent of $f$ and $w$.
If $\eta=\frac14$ (hence $\sigma>\frac d4\geq\frac14$),
we have that $V_{2\eta}=V_{1-2\eta}=H^{\frac12}_{00}(\OO)$
and $V_{2\sigma}=H^{2\sigma}_0(\OO)$.
We also recall that the $0$-extension operator 
outside $\OO$ is linear and continuous as a map
$L_0:H^{\frac12}_{00}(\OO)\to H^{\frac12}(\mathbb R^d)$
and $L_0:H^{2\sigma}_{0}(\OO)\to H^{2\sigma}(\mathbb R^d)$.
Consequently, we have that 
\begin{align*}
    \|fw\|_{V_{1-2\eta}}=\|fw\|_{H^{\frac12}_{00}(\mathbb R^d)}
    =\|L_0(fw)\|_{H^{\frac12}(\mathbb R^d)}=
    \|(L_0f)(L_0w)\|_{H^{\frac12}(\mathbb R^d)}\,.
\end{align*}
By applying the pointwise multiplication result in 
\cite[Thm.~7.3]{Beh-Hol2021} 
with $s_1=2\eta=\frac12$, $s_2=2\sigma$, $s=1-2\eta=\frac12$ and $p_1=p_2=p=2$, we get
\[
\|(L_0f)(L_0w)\|_{H^{\frac12}(\R^d)}\leq 
C\|L_0f\|_{H^{\frac12}(\mathbb R^d)}
\|L_0w\|_{H^{2\sigma}(\R^d)}
\leq C\|f\|_{H^{\frac12}_{00}(\OO)}\|w\|_{H^{2\sigma}_0(\OO)}\,,
\]
where $C$ only depends on $L_0$. By putting the two inequalities together,  we infer that \eqref{fw} holds also in the case $\eta=\frac14$.
The inequality \eqref{fw} implies that 
\begin{align*}
|\langle fg,w\rangle_{V_{-2\sigma}, V_{2\sigma}}|
\leq C\|f\|_{V_{2\eta}}\|g\|_{V_{2\eta-1}}
\|w\|_{V_{2\sigma}}
\end{align*}
for every $f,g,w\in C^\infty_c(\OO)$.
By density of $C^\infty_c(\OO)$ in $V_{2\sigma}$,
this shows that 
\[
  \|fg\|_{V_{-2\sigma}}\leq C\|f\|_{V_{2\eta}}\|g\|_{V_{2\eta-1}}
  \qquad\forall\,f,g\in C^\infty_c(\OO)\,.
\]
Since $C^\infty_c(\OO)$ is dense both in $V_{2\eta}$
and $V_{2\eta-1}$, this implies that the product $fg$
is well-defined in $V_{-2\sigma}$
for $f\in V_{2\eta}$ and $g\in V_{2\eta-1}$, 
and the thesis follows.
\end{proof}

We discuss now the main classes of examples. 
For sake of clarity, at the end of each example 
we add a table highlighting the most significant choices 
of the coefficients for which uniqueness holds. 
We use the concise notation $\sigma^+$ (or $\sigma^-$)
to indicate a choice of the parameters 
arbitrarily close to the value $\sigma$ from above (resp.~from below).

\subsection{Heat equations with perturbations}
\label{ssec:heat}
The paradigmatic example of semilinear stochastic PDE is given by 
nonlinear perturbations of the heat equation in the form
  \begin{align*}
  \d X - \Delta X\,\d t = (-\Delta)^\beta F((-\Delta)^\alpha X)\,\d t 
  + (-\Delta)^{-\delta}\,\d W 
  \quad&\text{in } (0,T)\times\OO\,,\\
  X=0\quad&\text{in } (0,T)\times\partial\OO,\\
  X(0)=x \quad&\text{in } \OO\,,
  \end{align*}
in the case of Dirichlet boundary conditions, where 
$F:\mathbb R\to \mathbb R$ is $\theta$-H\"older continuous
for some $\theta\in(0,1)$,
$x$ is a given initial datum, and 
$\alpha\in[0,1)$, $\delta\in(-\frac12+\alpha, \frac12]$, 
and $\beta\in[0,\frac12-\delta]$ are given nonnegative parameters.

We can rewrite such SPDE as evolution equation on 
$H=L^2(\OO)$ in the abstract form, 
\begin{align}\label{SDE-new}
\d X +A_DX \,\d t=B(X)\,\d t
+A_D^{-\delta}\,\d W\,, \qquad X(0)=x\,,    
\end{align}
where $W$ is an $H$-cylindrical Wiener process, $x\in H$, and
the nonlinear operator $B: D(A_D^\alpha)\to D(A_D^{-\beta})$ is defined as
\[
  \langle B(\varphi),\psi\rangle_{D(A_D^{-\beta}), D(A_D^\beta)}:=
  \int_\OO F(A_D^\alpha\varphi(\xi))
  A_D^\beta\psi(\xi)\,\d\xi,
  \qquad\varphi\in D(A_D^\alpha)\,,\quad\psi\in D(A_D^{\beta})\,.
\]
Note that $B$ is well-defined in $C^\theta(D(A_D^\alpha), D(A_D^{-\beta}))$ since $F\in C^\theta(\mathbb R, \mathbb R)$, hence also linearly bounded. 

Let us check the main assumptions of Theorems~\ref{th:1}, \ref{th:2}, and \ref{th:3}. Clearly, \ref{H0}--\ref{H2} and \ref{H4} are satisfied.
As far as assumption \ref{H3} is concerned, we need that $1+2\delta-2\alpha>\frac d2$, i.e.~$\delta> \frac{d}{4} +\alpha- \frac{1}{2}$. Since $\delta\leq\frac12-\beta$, this yields also the condition $\alpha+\beta<1-\frac d4$.
This means that for every $\alpha,\beta$ such that 
$\alpha+\beta<1-\frac d4$, 
we can choose any 
$\delta\in(\frac{d}{4} +\alpha- \frac{1}{2},\frac12-\beta]$.
Furthermore, in the critical case $\beta+\delta=\frac12$
a further smallness condition is required on $F$ 
in order to satisfy assumption \ref{H5}.
Eventually, conditions \eqref{trace1}--\eqref{ineq2_al} are satisfied 
provided that $1+\theta-2\beta-2(1-\theta)\delta-2\theta\alpha>\frac d2$ and 
$(1-\theta)\delta<(1-\theta)\alpha+\frac\theta2$, i.e.
\[
(1-\theta)\delta<\min\left\{
\frac12-\frac d4-\beta, 
\alpha\right\}
+ \frac\theta2-\theta\alpha\,.
\]
In order for \eqref{trace1}--\eqref{ineq2_al} to be compatible with \ref{H3} we need that 
\[
  (1-\theta)\left(\frac d4 + \alpha - \frac12\right)
  <\min\left\{
\frac12-\frac d4-\beta, 
\alpha\right\}
+ \frac\theta2-\theta\alpha
\]
i.e.~$\alpha+\beta<1-\frac d4\left(2-\theta\right)$
and $\theta>1-\frac 2d$.
Analogously, \eqref{ineq3_al} is satisfied if also 
$(1-\theta)\delta<\frac\theta2-\theta\alpha$, which requires that $(1-\theta)\left(\frac d4 + \alpha - \frac12\right)
<\frac\theta2-\theta\alpha$, i.e.~$\alpha<\frac12-\frac d4(1-\theta)$.
The results contained in Theorems~\ref{th:1}, \ref{th:2}, and \ref{th:3} can be then collected as follows.

{\sc Dimension $d=1$.} 
\begin{itemize}
\item Weak uniqueness holds for initial data in $D(A_D^\alpha)$
for 
\[
\alpha+\beta\in\left[0,\frac34\right)\,, 
\qquad\theta\in(0,1)\,,
\]
and for every $\delta\in\left(\alpha-\frac14,\frac12-\beta\right]$.
If $F$ is bounded, weak uniqueness holds also 
for initial data in $H$ for the same range 
of parameters.
\item Pathwise uniqueness holds for initial data in $D(A_D^\alpha)$ for 
\[
\alpha+\beta\in\left[0,\frac12+\frac\theta4\right)\,, 
\qquad\theta\in(0,1)\,,
\]
and for every $\delta\in\left(\alpha-\frac14,
\frac12-\beta\right]\cap 
\left(\alpha-\frac14,
\frac1{1-\theta}(\min\{\frac14-\beta, \alpha\}+\frac\theta2-\theta\alpha)\right)$ with the smallness condition on $F$
if $\delta+\beta=\frac12$. If $F$ is bounded, then also continuous dependence on the initial data holds with respect to the norm of $D(A_D^\alpha)$.
\item Pathwise uniqueness and continuous dependence
hold for initial data in $H$ if $F$ is bounded, for 
\[
 \alpha\in\left[0,\frac14+\frac\theta4\right)\,,\qquad
\alpha+\beta\in\left[0,\frac12+\frac\theta4\right)\,, 
\qquad\theta\in(0,1)\,,
\]
and for every $\delta\in\left(\alpha-\frac14,
\frac12-\beta\right]\cap 
\left(\alpha-\frac14,
\frac1{1-\theta}(\min\{\frac14-\beta, 0 \}+\frac\theta2-\theta\alpha)\right)$, with the smallness condition on $F$
if $\delta+\beta=\frac12$. 
\end{itemize}

{\sc Dimension $d=2$.}

\begin{itemize}
\item Weak uniqueness holds for initial data in $D(A_D^\alpha)$, for 
\[
\alpha+\beta\in\left[0,\frac12\right)\,, 
\qquad\theta\in(0,1)\,,
\]
and for every $\delta\in\left(\alpha,\frac12-\beta\right]$. If $F$ is bounded, weak uniqueness holds also 
for initial data in $H$ for the same range 
of parameters.
\item Pathwise uniqueness holds for initial data in $D(A_D^\alpha)$, for
\[
\alpha+\beta\in\left[0,\frac\theta2\right)\,, 
\qquad\theta\in(0,1)\,,
\]
and for every $\delta\in\left(\alpha,
\frac12-\beta\right]\cap 
\left(\alpha,
\frac1{1-\theta}(\frac\theta2-\beta-\theta\alpha)\right)$, with the smallness condition on $F$
if $\delta+\beta=\frac12$.  If $F$ is bounded, then pathwise uniqueness holds also for initial data in $H$, and 
and the continuous dependence on initial data holds with respect to the norms of both $D(A_D^\alpha)$ and $H$.

\end{itemize}

{\sc Dimension $d=3$.}

\begin{itemize}
\item Weak uniqueness holds for initial data in $D(A_D^\alpha)$, for 
\[
\alpha+\beta\in\left[0,\frac14\right)\,, 
\qquad\theta\in(0,1)\,,
\]
and for every $\delta\in\left(\alpha+\frac14,\frac12-\beta\right]$. If $F$ is bounded, weak uniqueness holds also 
for initial data in $H$ for the same range 
of parameters.
\item Pathwise uniqueness holds for initial data in $D(A_D^\alpha)$, for
\[
\alpha+\beta\in\left[0,\frac34\theta-\frac12\right)\,, 
\qquad\theta\in\left(\frac13,1\right)\,,
\]
and for every $\delta\in\left(\alpha+\frac14,
\frac12-\beta\right]\cap 
\left(\alpha+\frac14,
\frac1{1-\theta}(\frac\theta2-\frac14-\beta-\theta\alpha)\right)$, with the smallness condition on $F$
if $\delta+\beta=\frac12$.  If $F$ is bounded, then pathwise uniqueness holds also for initial data in $H$, and 
and the continuous dependence on initial data holds with respect to the norms of both $D(A_D^\alpha)$ and $H$.
\end{itemize}

\subsection{Fractional heat equations with perturbations}
\label{ssec:heat_fr}
A large class of nonlocal PDEs associated to 
fractional diffusion, with possibly nonlocal boundary conditions, can be framed into the abstract evolution equation
\begin{equation}
\label{SDE-new-fr}
\d X +A_D^\gamma X \,\d t=A_D^{\nu}F(A_D^\mu X)\,\d t
+A_D^{-\rho}\,\d W\,, \qquad X(0)=x\,,    
\end{equation}
where $\gamma>0$ and $\mu, \nu, \rho\geq0$ are given constants. By using the notation and arguing as in Subsection~\ref{ssec:heat}, we can find the range of values for such parameters so that uniqueness holds. To this end, we recast the equation in the setting of Theorems~\ref{th:1}, \ref{th:2}, and \ref{th:3} by considering the operator $A:=A_D^\gamma$ and with the choices of the coefficients
$\alpha=\frac\mu\gamma$, $\beta=\frac\nu\gamma$, and 
$\delta=\frac\rho\gamma$. The conditions $\alpha\in[0,1)$, $\delta\in\left(-\frac12+\alpha, \frac12\right]$, and $\beta\in\left[0,\frac12-\delta\right]$ translate as $\mu\in[0,\gamma)$, $\rho\in\left(-\frac\gamma2+\mu,\frac\gamma2\right]$,
and $\nu\in\left[0,\frac\gamma2-\rho\right]$, respectively.

Assumption \ref{H3} is satisfied provided that
$\gamma(1+2\delta-2\alpha)>\frac d2$, 
i.e., $\rho> \frac{d}{4} +\mu- \frac{\gamma}{2}$. Since $\rho\leq\frac\gamma2-\nu$, this yields also the condition $\mu+\nu<\gamma-\frac d4$.
This means that for every $\mu,\nu$ such that 
$\mu+\nu<\gamma-\frac d4$, 
we can choose any 
$\rho\in\left(\frac{d}{4} +\mu- \frac{\gamma}{2},\frac\gamma2-\nu\right]$.
Furthermore, 
conditions \eqref{trace1}--\eqref{ineq2_al} are satisfied 
provided that $\gamma(1+\theta-2\beta-2(1-\theta)\delta-2\theta\alpha)>\frac d2$ and 
$(1-\theta)\delta<(1-\theta)\alpha+\frac\theta2$, i.e.,
\[
(1-\theta)\rho<\min\left\{
\frac\gamma2-\frac d4-\nu, 
\mu\right\}
+ \frac{\gamma\theta}2-\theta\mu\,.
\]
In order for \eqref{trace1}--\eqref{ineq2_al} to be compatible with \ref{H3} we need that 
\[
(1-\theta)\left(\frac d4 + \mu - \frac\gamma2\right)<\min\left\{
\frac\gamma2-\frac d4-\nu, 
\mu\right\}+ \frac{\gamma\theta}2-\theta\mu,
\]
i.e., $\mu+\nu<\gamma-\frac d4\left(2-\theta\right)$
and $\theta>1-\frac{2\gamma}d$.
Since $\mu+\nu>0$, the condition $\gamma>\frac d4(2-\theta)$
yields $\theta>2-\frac{4\gamma}d$, which is stronger than 
$\theta>1-\frac{2\gamma}d$. Moreover, since $\theta<1$
one is forced to choose $2-\frac{4\gamma}d<1$, i.e.~$\gamma>\frac d4$.
Analogously, \eqref{ineq3_al} is satisfied if also 
$(1-\theta)\delta<\frac\theta2-\theta\alpha$, which requires that $(1-\theta)\left(\frac d4 + \mu - \frac\gamma2\right)
<\frac{\gamma\theta}2-\theta\mu$, i.e.~$\mu<\frac\gamma2-\frac d4(1-\theta)$: hence, as before, this is compatible with
$\mu+\nu<\gamma-\frac d4\left(2-\theta\right)$
if $\gamma>\frac d2$.
The results contained in Theorems~\ref{th:1}, \ref{th:2}, and \ref{th:3} can be then collected as follows.

\begin{itemize}
\item Weak uniqueness holds for initial data in 
$D(A_D^\mu)$ for 
\[ \gamma > \frac{d}{4}\,,
\qquad\theta\in(0,1)\,,
\qquad 
\mu+\nu\in\left[0,\gamma - \frac d4\right)\,, 
\]
and for every $\rho\in\left(\frac d4 + \mu-\frac \gamma2,\frac\gamma2-\nu\right]$.
If $F$ is bounded, weak uniqueness holds also 
for initial data in $H$ for the same range 
of parameters.
\item Pathwise uniqueness holds for initial data in $D(A_D^\mu)$, for 
\[ \gamma > \frac{d}{4}\,, \qquad
\qquad\theta\in\left(\max\left\{0, 2-\frac{4\gamma}d\right\},1\right)\,,
\qquad
\mu+\nu\in\left[0,\gamma - \frac{d}{4}(2-\theta)\right)\,, 
\]
and for every $\rho\in\left(\frac{d}{4} +\mu- \frac{\gamma}{2},\frac\gamma2-\nu\right]\cap 
\left(\frac{d}{4} +\mu- \frac{\gamma}{2},
\frac1{1-\theta}(\min\left\{
\frac\gamma2-\frac d4-\nu, 
\mu\right\}
+ \frac{\gamma\theta}2-\theta\mu)\right)$, with the smallness condition on $F$
if $\rho+\nu=\frac\gamma2$. If $F$ is bounded, then also continuous dependence on the initial data holds with respect to the norm of $D(A_D^\alpha)$.
\item Pathwise uniqueness and continuous dependence
hold for initial data in $H$ if $F$ is bounded, for
\begin{alignat*}{2} 
&\gamma > \frac d2\,, \qquad
&&\theta\in\left(\max\left\{0, 2-\frac{4\gamma}d\right\},1\right)\,,\\
&\mu\in\left[0,
\min\left\{
\frac\gamma2- \frac{d}{4}(1-\theta),
\gamma - \frac{d}{4}(2-\theta)
\right\}
\right)\,,\qquad
&&\mu+\nu\in\left[0,\gamma - \frac{d}{4}(2-\theta)\right)\,, 
\end{alignat*}
and for every $\rho\in\left(\frac{d}{4} +\mu- \frac{\gamma}{2},\frac\gamma2-\nu\right]\cap 
\left(\frac{d}{4} +\mu- \frac{\gamma}{2},
\frac1{1-\theta}(\min\left\{
\frac\gamma2-\frac d4-\nu, 
0\right\}
+ \frac{\gamma\theta}2-\theta\mu)\right)$, with the smallness condition on $F$
if $\rho+\nu=\frac\gamma2$.
\end{itemize}

Notice that the range in the pathwise uniqueness tends to the ones of the weak uniqueness case as $\theta$ goes to $1$.
Furthermore, we stress that the freedom of choice on the values of $\gamma$ allows us to recover uniqueness in higher spatial dimension. 
From the applications point of view, given an explicit equation for which the dimension $d$, the differential order of the nonlinearity $\mu+ \nu$ and the H\"older regularity $\theta$ are fixed, one can obtain the minimal fractional order $\gamma$ for which uniqueness holds. In fluid-dynamical models this is typically related to the tuning of the hyperviscosity coefficient and will be discussed in detail in the next subsections. 

We provide three tables highlighting some notable choices
of the coefficients for which one has 
weak uniqueness, pathwise uniqueness with $\theta\cong1$,
and pathwise uniqueness with $\theta\cong0$.
For each table, we provide three different scenarios,
corresponding to: 
the minimal admissible value of $\gamma$,
the classical choice $\gamma=1$ of the heat equation
of Subsection~\ref{ssec:heat},
and the case $\rho=0$ of space-time white noise.

{\small
\begin{table}[!h]
\caption{Range of coefficients for weak uniqueness}
\label{tab0w}
\begin{tabular}{ |c||c|c|c|c|c|}
 \hline
 \multicolumn{6}{|c|}{Weak uniqueness} \\
 \hline\hline
 Dimension& 
 Hyperviscosity $\gamma$&
 H\"older coeff.~$\theta$&
 Coeff.~$\mu$&
 Coeff.~$\nu$&
 Noise colour $\rho$\\[3pt]
 \hline
 \multirow{3}{*}{$d=1$}   & $\frac{1}{4}^+$ &$(0,1)$ & 
 $0$ & $0^+$ & $\frac18^-$\\[3pt]
 & $1$ &$(0,1)$ & 
 $0$ & $\frac34^-$ & $-\frac{1}{4}^+$\\[3pt]
 & $\frac{1}{2}^+$ &$(0,1)$ & 
 $0$ & $\frac{1}{4}^+$ & $0$\\[3pt]
 \hline
 \multirow{3}{*}{$d=2$} & $\frac{1}{2}^+$ &$(0,1)$ & 
 $0$ & $0^+$ & $\frac14^-$\\[3pt]
 &   $1$  & $(0,1)$   &$0$
 & $\frac12^-$ & $0^+$\\[3pt]
 &   
 $1^+$  & $(0,1)$   &$0$
 & $\frac{1}{2}^+$ & $0$\\[3pt]
 \hline
 \multirow{3}{*}{$d=3$} & $\frac{3}{4}^+$ &$(0,1)$ & 
 $0$ & $0^+$ & $\frac38^-$\\[3pt]
 &  $1$  & $(0,1)$   &$0$
 & $\frac14^-$ & $\frac14^+$\\[3pt]
 &   
 $\frac32^+$  & $(0,1)$   &$0$
 & $\frac34^+$ & $0$\\[3pt]
 \hline
\end{tabular}
\end{table}

\begin{table}[!h]
\caption{Range of coefficients for pathwise uniqueness: case $\theta\cong1$}
\label{tab5}
\begin{tabular}{ |c||c|c|c|c|c|}
 \hline
 \multicolumn{6}{|c|}{Pathwise uniqueness} \\
 \hline\hline
 Dimension& 
 Hyperviscosity $\gamma$&
 H\"older coeff.~$\theta$&
 Coeff.~$\mu$&
 Coeff.~$\nu$&
 Noise colour $\rho$\\[3pt]
 \hline
 \multirow{3}{*}{$d=1$}   & $\frac{1}{4}^+$ &$1^-$ & 
 $0$ & $0^+$ & $\frac18^-$\\[3pt]
 & $1$ &$1^-$ & 
 $0$ & $\frac34^-$ & $-\frac{1}{4}^+$\\[3pt]
 & $\frac{1}{2}^+$ &$1^-$ & 
 $0$ & $\frac{1}{4}^+$ & $0$\\[3pt]
 \hline
 \multirow{3}{*}{$d=2$} & $\frac{1}{2}^+$ &$1^-$ & 
 $0$ & $0^+$ & $\frac14^-$\\[3pt]
 &   $1$  & $1^-$   &$0$
 & $\frac12^-$ & $0^+$\\[3pt]
 &   
 $1^+$  & $1^-$   &$0$
 & $\frac{1}{2}^+$ & $0$\\[3pt]
 \hline
 \multirow{3}{*}{$d=3$} & $\frac{3}{4}^+$ &$1^-$ & 
 $0$ & $0^+$ & $\frac38^-$\\[3pt]
 &  $1$  & $1^-$   &$0$
 & $\frac14^-$ & $\frac14^+$\\[3pt]
 &   
 $\frac32^+$  & $1^-$   &$0$
 & $\frac34^+$ & $0$\\[3pt]
 \hline
\end{tabular}
\end{table}

\begin{table}[!h]
\caption{Range of coefficients for pathwise uniqueness: case $\theta\cong0$}
\label{tab6}
\begin{tabular}{ |c||c|c|c|c|c|}
 \hline
 \multicolumn{6}{|c|}{Pathwise uniqueness} \\
 \hline\hline
 Dimension& 
 Hyperviscosity $\gamma$&
 H\"older coeff.~$\theta$&
 Coeff.~$\mu$&
 Coeff.~$\nu$&
 Noise colour $\rho$\\[3pt]
 \hline
 \multirow{3}{*}{$d=1$}   & $\frac{1}{2}$ &$0^+$ & 
 $0$ & $0^+$ & $0^+$\\[3pt]
 & $1$ &$0^+$ & 
 $0$ & $\frac12^+$ & $-\frac{1}{4}^+$\\[3pt]
 & $\frac{1}{2}^+$ &$0^+$ & 
 $0$ & $0^+$ & $0$\\[3pt]
 \hline
 \multirow{2}{*}{$d=2$} & $1$ &$0^+$ & 
 $0$ & $0^+$ & $0^+$\\[3pt]
 &   
 $1^+$  & $0^+$   &$0$
 & $0^+$ & $0$\\[3pt]
 \hline
 \multirow{2}{*}{$d=3$} & $\frac{3}{2}$ &$0^+$ & 
 $0$ & $0^+$ & $0^+$\\[3pt]
 &   
 $\frac32^+$  & $0^+$   &$0$
 & $0^+$ & $0$\\[3pt]
 \hline
\end{tabular}
\end{table}
}

\vspace{2pt}

\subsection{Burgers equations with perturbations}
\label{Sc:Burgers}
We consider here Burgers equations in the form 
\begin{align*}
\d X -\Delta X \,\d t + 
(X\cdot \nabla)X\,\d t= (-\Delta)^{-\rho}\,\d W \quad
&\text{in } (0,T)\times\OO\,,\\
X=0\quad
&\text{in } (0,T)\times\partial\OO\,,\\
X(0)=x \quad&\text{in }\in \OO\,,
\end{align*}
where $\rho\geq0$, $X$ is an $\mathbb R^d$-valued process, 
$-\Delta$ is the classical vectorial extension of the negative Laplace operator, $W$ is an $H$-cylindrical Wiener process with $H=L^2(\OO;\R^d)$, 
and the nonlinearity $B_1$ is defined on smooth functions as
\begin{equation}\label{B1}
  [B_1(\varphi)]_i:=[(\varphi\cdot\nabla)\varphi]_{i}:=
  \sum_{j=1}^d\varphi_j\partial_{j}\varphi_i\,,
  \quad i=1,\ldots,d\,, \quad \varphi\in C^\infty_c(\OO;\mathbb R^d)\,.
\end{equation}
Note that it holds
\[
  B_1(\varphi)=(J\varphi)\varphi \quad\forall\,\varphi\in C^\infty_c(\OO;\mathbb R^d)\,,
\]
where $J$ denotes the Jacobian, so that 
\[
B_1(\varphi)=\frac12\nabla(|\varphi|^2)\quad\forall\,\varphi\in C^\infty_c(\OO;\mathbb R^d)
\qquad\Leftrightarrow\qquad d=1\,.
\]
With a slight abuse of notation,
we use the same symbol $A_D$ for the 
vector-valued realisation 
of the negative Laplacian with homogeneous Dirichlet 
conditions, as well as the symbols $V_{2s}$, $s\in \R$, 
for the domain of its $s$-th power.

In dimension $d=1$, since
$V_{\varepsilon}\subseteq L^{\frac2{1-2\varepsilon}}(\OO)$
and $L^{\frac{1}{1-2\varepsilon}}(\OO)\subseteq V_{-\frac12+2\varepsilon}$
for all $\varepsilon\in\left(0,\frac12\right)$
by the Sobolev embeddings, one has that
$B_1\in C^{0,1}_{{\rm loc}}(V_\varepsilon; 
V_{-\frac32+2\varepsilon})$
for all $\varepsilon\in\left(0,\frac12\right)$.
This means that in dimension $d=1$, one can take 
$\mu=\frac\varepsilon2>0$ and $\nu\geq\frac34-2\mu$.
In dimension $d\geq2$, 
as a consequence of Lemma~\ref{Lemma-immersioni}, 
it readily follows that for every 
$\mu\in \left[\frac14,\frac12\right]$ and $\nu\geq \frac12-\mu$ such that $2\mu+\nu>\frac{d+2}4$, the operator 
$B_1$ has a well-defined extension
$B_1:V_{2\mu}\to V_{-2\nu}$ and it holds that 
\[
  \norm{B_1(\varphi)}_{V_{-2\nu}}
  \leq C\norm{\varphi}_{V_{2\mu}}^2
  \quad\forall\,\varphi\in V_{2\mu}\,.
\]
Furthermore, for every $\varphi,\psi\in V_{2\mu}$,
by using again Lemma~\ref{Lemma-immersioni} we have 
\begin{align*}
\norm{B_1(\varphi)-B_1(\psi)}_{V_{-2\nu}}&\leq
\sum_{i,j=1}^d\norm{\varphi_j\partial_{j}\varphi_i
-\psi_j\partial_{j}\psi_i}_{V_{-2\nu}}\\
&\leq\sum_{i,j=1}^d\left[
\norm{(\varphi_j-\psi_j)\partial_j\psi_i}_{V_{-2\nu}}
+\norm{\varphi_j\partial_j(\varphi_i-\psi_i)}_{V_{-2\nu}}
\right]\\
& \leq C\sum_{i,j=1}^d\left[
\norm{\varphi_j-\psi_j}_{V_{2\mu}}
\norm{\partial_j\psi_i}_{V_{2\mu-1}}
+\norm{\varphi_j}_{V_{2\mu}}\norm{\partial_j(\varphi_i-\psi_i)}_{V_{2\mu-1}}
\right]\\
&\leq Cd^2
\left[\norm{\varphi}_{V_{2\mu}}+\norm{\psi}_{V_{2\mu}}\right]
\norm{\varphi-\psi}_{V_{2\mu}}\  
\end{align*}
so that $B_1:V_{2\mu}\to V_{-2\nu}$ is locally Lipschitz continuous, i.e.~$B_1\in C^{0,1}_{{\rm loc}}(V_{2\mu}; V_{-2\nu})$.

In order to allow for more generality, we consider also 
possible hyperviscous diffusions and further 
nonlinear perturbations in the abstract form 
\begin{align}
\label{Burgers}
\d X + A_D^\gamma X \,\d t + 
B_1(X)\,\d t= B_2(X)\,\d t+A_D^{-\rho}\,\d W\,, 
\qquad X(0)=x\,,   
\end{align}
where $\gamma>0$ is a parameter that describes 
the viscous diffusion effect and 
$B_2\in C^\theta_{{\rm loc}}(V_{2\mu};V_{-2\nu})$ for some $\theta\in(0,1)$ has linear growth. 
Note that if $B_2\equiv 0$ and $\gamma=1$, then \eqref{Burgers} 
is the classical Burgers equation. Otherwise, $B_2$ can 
be considered as a suitable perturbation.

It is natural to set 
$A:=A_D^\gamma$, $\alpha:=\frac\mu\gamma$, 
$\beta:=\frac\nu\gamma$, $\delta:=\frac\rho\gamma$.
We recall that the conditions 
     $\alpha\in[0,1)$, $\delta\in\left(-\frac12+\alpha, \frac12\right]$, 
    and $\beta\in\left[0,\frac12-\delta\right]$ translate as
    $\mu\in[0,\gamma)$, 
    $\rho\in    \left(-\frac\gamma2+\mu,\frac\gamma2\right]$,
    and $\nu\in\left[0,\frac\gamma2-\rho\right]$, respectively.
By arguing as in Subsection~\ref{ssec:heat_fr} and 
by taking into account the further conditions on $\mu,\nu$ obtained above, 
the results contained in Theorems~\ref{th:1}, \ref{th:2}, and \ref{th:3} can be collected as follows.

{\sc Dimension $d=1$.} 
\begin{itemize}
\item Weak uniqueness holds for initial data in $D(A_D^\mu)$ for 
\begin{alignat*}{2}
&\gamma>\frac58\,, 
\qquad\theta\in(0,1)\,,
\qquad 
&&\mu\in\left(\max\{0,1-\gamma\}, \gamma-\frac14\right]\,,\\
&\nu\geq\max\left\{0,\frac34-2\mu\right\}\,,\qquad
&&\mu+\nu\in\left[0,\gamma-\frac 14\right)\,, 
\end{alignat*}
and for every $\rho\in\left(\frac 14 + \mu-\frac \gamma2,\frac\gamma2-\nu\right]$.
The condition $\gamma>\frac58$ is the minimal one ensuring 
a possible choice of $(\mu,\nu)$ satisfying the other conditions.
In particular, 
weak uniqueness holds for the classical 
one-dimensional Burgers equation ($\gamma=1$)
with initial data in 
$V_{2\varepsilon}=H^{2\varepsilon}(\OO)$
for every $\varepsilon\in(0,\frac14)$, by choosing 
$\mu=\varepsilon$, $\nu=\frac34-2\varepsilon$, and 
$\rho\in\left(-\frac14+\varepsilon, -\frac14+2\varepsilon\right)$.
Analogously, weak uniqueness holds
with initial data in 
$V_{\frac14}=H^{\frac14}(\OO)$
 by choosing 
$\mu=\frac18$, $\nu=\frac12$, and 
$\rho=0$.

\item Pathwise uniqueness holds for initial data in $D(A_D^\mu)$ for 
\begin{alignat*}{2}
&\gamma>\frac58\,, 
\qquad\theta\in(0,1)\,,\qquad
&&\mu\in\left(\max\{0,1-\gamma\}, \gamma-\frac14\right]\,,\\
&\nu\geq\max\left\{0,\frac34-2\mu\right\}\,,\qquad
&&\mu+\nu\in\left[0,\gamma+\frac\theta4-\frac12\right)\,, 
\end{alignat*}
and $\rho\in\left(\frac{1}{4} +\mu- \frac{\gamma}{2},\frac\gamma2-\nu\right)\cap 
\left(\frac{1}{4} +\mu- \frac{\gamma}{2},
\frac1{1-\theta}(\min\left\{
\frac\gamma2-\frac 14-\nu, 
\mu\right\}
+ \frac{\gamma\theta}2-\theta\mu)\right)$. 
In particular, 
pathwise uniqueness holds for the classical 
one-dimensional Burgers equation ($\gamma=1$)
with initial data in 
$V_{2\varepsilon}=H^{2\varepsilon}(\OO)$
for every $\varepsilon\in(0,\frac14)$ by choosing 
$\mu=\varepsilon$, $\nu=\frac34-2\varepsilon$, 
$\theta\in(1-4\varepsilon,1)$,
and $\rho\in\left(-\frac14+\varepsilon, -\frac14+2\varepsilon\right)
\cap\left(-\frac14+\varepsilon, 
\frac1{1-\theta}\left(-\frac12+2\varepsilon+\frac\theta2-\theta\varepsilon\right)\right)$.
Analogously, pathwise uniqueness holds
with data in 
$V_{\frac12}=H^{\frac12}_{00}(\OO)$
by choosing 
$\mu=\frac14$, $\nu=0$, 
$\theta\in(0,1)$,
and 
$\rho\in\left(0,\frac14
 \min\{1,\frac{\theta}{1-\theta}\}\right)$.
\end{itemize}

{\sc Dimension $d=2$.}

\begin{itemize}
\item Weak uniqueness holds for initial data in $D(A_D^\mu)$ for 
\[
\gamma>1\,, 
\qquad\theta\in(0,1)\,,
\qquad
\mu\in\left[\frac14,\frac12\right]\,,\qquad
\mu+\nu\in\left[\frac12,\gamma-\frac12\right)\,,
\qquad 2\mu+\nu>1\,,
\]
and for every $\rho\in\left(\frac 12 + \mu-\frac \gamma2,\frac\gamma2-\nu\right]$.
In particular, 
weak uniqueness holds for the 
two-dimensional hyperviscous Burgers equation 
in the following two interesting regimes:
either we take initial data in 
$D(A_D^{\frac14})=V_{\frac12}=H^{\frac12}_{00}(\OO)$ by choosing 
$\gamma>\frac54$, $\mu=\frac14$, 
$\nu\in\left(\frac12,\gamma-\frac34\right)$, and 
$\rho\in\left(\frac34-\frac\gamma2,\frac\gamma2-\nu\right]$, or we take 
initial data in $D(A_D^{\frac12})=V_{1}=H^{1}_{0}(\OO)$ by choosing 
$\gamma>1$, $\mu=\frac12$, 
$\nu\in(0,\gamma-1)$, and 
$\rho\in\left(1-\frac\gamma2,\frac\gamma2-\nu\right]$.

\item Pathwise uniqueness holds for initial data in $D(A_D^\mu)$ for 
\[
\gamma>1\,,
\qquad\theta\in(\max\{0,3-2\gamma\},1)\,,
\qquad
\mu\in\left[\frac14,\frac12\right]\,,\qquad
\mu+\nu\in\left[\frac12,\gamma+\frac\theta2-1\right)\,, \qquad
2\mu+\nu>1\,,
\]
and for every $\rho\in\left(\frac{1}{2} +\mu- \frac{\gamma}{2},
\frac\gamma2-\nu\right)\cap 
\left(\frac{1}{2} +\mu- \frac{\gamma}{2},
\frac1{1-\theta}(\min\left\{
\frac\gamma2-\frac 12-\nu, 
\mu\right\}
+ \frac{\gamma\theta}2-\theta\mu)\right)$.
In particular, 
pathwise uniqueness holds for the 
two-dimensional hyperviscous Burgers equation 
in the following two interesting regimes:
either we take initial data in 
$D(A_D^{\frac14})=V_{\frac12}=H^{\frac12}_{00}(\OO)$ by choosing 
$\gamma>\frac54$, 
$\theta\in\left(\max\{0,\frac72-2\gamma\}, 1\right)$, $\mu=\frac14$, 
$\nu\in\left(\frac12,\gamma+\frac\theta2-\frac54\right)$, and 
$\rho\in\left(\frac34-\frac\gamma2,\frac\gamma2-\nu\right)
\cap \left(\frac{3}{4} - \frac{\gamma}{2},
\frac1{1-\theta}(\min\left\{
\frac\gamma2-\frac 12-\nu, 
\frac14\right\}
+ \frac{\gamma\theta}2-\frac\theta4)\right)$, or we take 
initial data in $D(A_D^{\frac12})=V_{1}=H^{1}_{0}(\OO)$ by choosing 
the parameters
$\gamma>1$, 
$\theta\in(\max\{0,3-2\gamma\},1)$,
$\mu=\frac12$, 
$\nu\in\left(0,\gamma+\frac\theta2-\frac32\right)$, and 
$\rho\in\left(1-\frac\gamma2,\frac\gamma2-\nu\right)
\cap\left(1- \frac{\gamma}{2},
\frac1{1-\theta}(\min\left\{
\frac\gamma2-\frac 12-\nu, 
\frac12\right\}
+ \frac{\gamma\theta}2-\frac\theta2)\right)$.
\end{itemize}

{\sc Dimension $d=3$.}

\begin{itemize}
\item Weak uniqueness holds for initial data in $D(A_D^\mu)$ for 
\[
\gamma>\frac54\,,
\qquad\theta\in(0,1)\,,
\qquad
\mu\in\left[\frac14,\frac12\right]\,,\qquad
\mu+\nu\in\left[\frac12,\gamma-\frac34\right)\,, 
\qquad 2\mu+\nu>\frac54\,,
\]
and for every $\rho\in\left(\frac 34 + \mu-\frac \gamma2,\frac\gamma2-\nu\right]$.
In particular, 
weak uniqueness holds for the 
three-dimensional hyperviscous Burgers equation 
in the following two interesting regimes:
either we take initial data in 
$D(A_D^{\frac14})=V_{\frac12}=H^{\frac12}_{00}(\OO)$ by choosing 
$\gamma>\frac74$, $\mu=\frac14$, 
$\nu\in\left(\frac34,\gamma-1\right)$, and 
$\rho\in\left(1-\frac\gamma2,\frac\gamma2-\nu\right]$, or we take 
initial data in $D(A_D^{\frac12})=V_{1}=H^{1}_{0}(\OO)$ by choosing 
$\gamma>\frac32$, $\mu=\frac12$, 
$\nu\in\left(\frac14,\gamma-\frac54\right)$, and 
$\rho\in\left(\frac54-\frac\gamma2,\frac\gamma2-\nu\right]$.

\item Pathwise uniqueness holds for initial data in $D(A_D^\mu)$ for 
\begin{alignat*}{2}
&\gamma>\frac54\,,\qquad
&&\theta\in\left(\max\left\{0, 
\frac83-\frac43\gamma\right\},1\right)\,,\\
&\mu\in\left[\frac14,\frac12\right]\,,\qquad &&\mu+\nu\in\left[\frac12,\gamma+\frac34\theta-\frac32\right)\,, 
\qquad 2\mu+\nu>\frac54\,,
\end{alignat*}
and for every $\rho\in\left(\frac{3}{4} +\mu- \frac{\gamma}{2},\frac\gamma2-\nu\right)\cap 
\left(\frac{3}{4} +\mu- \frac{\gamma}{2},
\frac1{1-\theta}(\min\left\{
\frac\gamma2-\frac d4-\nu, 
\mu\right\}
+ \frac{\gamma\theta}2-\theta\mu)\right)$.
In particular, 
pathwise uniqueness holds for the 
three-dimensional hyperviscous Burgers equation 
in the following two interesting regimes:
either we take initial data in 
$D(A_D^{\frac14})=V_{\frac12}=H^{\frac12}_{00}(\OO)$ by choosing 
$\gamma>\frac74$, 
$\theta\in\left(\max\{0,\frac{10}3-\frac43\gamma\}, 1\right)$,
$\mu=\frac14$, 
$\nu\in\left(\frac34,\gamma+\frac34\theta-\frac74\right)$, and 
$\rho\in\left(1-\frac\gamma2,\frac\gamma2-\nu\right)
\cap \left(1 - \frac{\gamma}{2},
\frac1{1-\theta}(\min\left\{
\frac\gamma2-\frac 34-\nu, 
\frac14\right\}
+ \frac{\gamma\theta}2-\frac\theta4)\right)$, or we take 
initial data in $D(A_D^{\frac12})=V_{1}=H^{1}_{0}(\OO)$ by choosing the parameters
$\gamma>\frac32$, 
$\theta\in\left(\max\{0,3-\frac43\gamma\}, 1\right)$,
$\mu=\frac12$, 
$\nu\in\left(\frac14,\gamma+\frac34\theta-2\right)$, and 
$\rho\in\left(1-\frac\gamma2,\frac\gamma2-\nu\right)
\cap\left(1- \frac{\gamma}{2},
\frac1{1-\theta}(\min\left\{
\frac\gamma2-\frac 34-\nu, 
\frac12\right\}
+ \frac{\gamma\theta}2-\frac\theta2)\right)$.
\end{itemize}

We summarise in the following tables the most relevant choices
of the parameters that ensure uniqueness for hyperviscous 
Burgers equation \eqref{Burgers}. 
More specifically, we highlight here the 
case where the regularity of the perturbation $B_2$ is 
analogous to the one of $B_1$, namely the case where
the coefficient $\theta$ is close to $1$. In this framework, 
for every space dimension we optimise the minimal value of $\gamma$
ensuring uniqueness in the limiting cases $\mu=\frac14$ and $\mu=\frac12$ (and even $\mu=0$ in dimension $1$), 
the former providing a uniqueness result for 
a larger set of initial data and the latter being instead the most classical in the literature.

{\small
\begin{table}[!h]
\caption{Range of coefficients for weak uniqueness}
\label{tab1}
\begin{tabular}{ |c||c|c|c|c|c|}
 \hline
 \multicolumn{6}{|c|}{Weak uniqueness} \\
 \hline\hline
 Dimension& 
 Hyperviscosity $\gamma$&
 H\"older coeff.~$\theta$&
 Coeff.~$\mu$&
 Coeff.~$\nu$&
 Noise colour $\rho$\\[3pt]
 \hline
 \multirow{3}{*}{$d=1$}   & $1$ &$(0,1)$ & 
 $0^+$ & $\frac34^{-}$ & $-\frac14^+$\\[3pt]
 & $1$ &$(0,1)$ & 
 $\frac18$ & $\frac12$ & $0$\\[3pt]
 & $1$ &$(0,1)$ & 
 $\frac14$ & $\frac14^+$ & $0^+$\\[3pt]
 \hline
 \multirow{2}{*}{$d=2$}&   
 $\frac54^+$  & $(0,1)$   &$\frac14$
 & $\frac12^+$ & $\frac18^-$\\[3pt]
 &   
 $1^+$  & $(0,1)$   &$\frac12$
 & $0^+$ & $\frac12^-$\\[3pt]
 \hline
 \multirow{2}{*}{$d=3$}&   
 $\frac74^+$  & $(0,1)$   &$\frac14$
 & $\frac34^+$ & $\frac18^-$\\[3pt]
 &   
 $\frac32^+$  & $(0,1)$   &$\frac12$
 & $\frac14^+$ & $\frac12^-$\\[3pt]
 \hline
\end{tabular}
\end{table}

\begin{table}[!h]
\caption{Range of coefficients for pathwise uniqueness: case $\theta\cong1$}
\label{tab2}
\begin{tabular}{ |c||c|c|c|c|c|}
 \hline
 \multicolumn{6}{|c|}{Pathwise uniqueness} \\
 \hline\hline
 Dimension& 
 Hyperviscosity $\gamma$&
 H\"older coeff.~$\theta$&
 Coeff.~$\mu$&
 Coeff.~$\nu$&
 Noise colour $\rho$\\[3pt]
 \hline
 \multirow{2}{*}{$d=1$}   & $1$ & $1^-$ & 
 $0^+$ & $\frac34^-$ & $-\frac14^+$\\[3pt]
 & $1$ &$1^-$ & 
 $\frac14$ & $\frac14^+$ & $0^+$\\[3pt]
 \hline
 \multirow{2}{*}{$d=2$}&   
 $\frac54^+$  & $1^-$   &$\frac14$
 & $\frac12^+$ & $\frac18^-$\\[3pt]
 &   
 $1^+$  & $1^-$   &$\frac12$
 & $0^+$ & $\frac12^-$\\[3pt]
 \hline
 \multirow{2}{*}{$d=3$}&   
 $\frac74^+$  & $1^-$   &$\frac14$
 & $\frac34^+$ & $\frac18^-$\\[3pt]
 &   
 $\frac32^+$  & $1^-$   &$\frac12$
 & $\frac14^+$ & $\frac12^-$\\[3pt]
 \hline
\end{tabular}
\end{table}
}

As a particular case, if $B_2\equiv0$, i.e.~\eqref{Burgers}
is the classical hyperviscous Burgers equation, 
the second table provides the range of parameters for 
pathwise uniqueness. For example, in dimension $3$ we obtain 
pathwise uniqueness for initial data in $H^{\frac12}_{00}(\OO)$
provided that the hyperviscosity satisfies $\gamma>\frac74$.
As far as existence of solutions is concerned, 
this is widely studied by using ad-hoc techniques.
In particular, with $\gamma=1$
the equation is well-posed in dimension $1$
with initial data in $L^2(\OO)$ (see \cite{gy98,LiuRo}).
In higher dimensions, the case $\gamma=1$ is more delicate:
the literature is extremely vast and among the numerous contributions
we refer to \cite{Brz, zhang2020} and the references therein.

An alternative perspective would be to consider the case
where the perturbation $B_2$ is less regular than $B_1$, namely 
that $\theta$ is closer to $0$. In such situation,
the optimisation of $\gamma$ would naturally increase
the minimal value of the parameter
for which uniqueness holds.

{\small
\begin{table}[!h]
\caption{Range of coefficients for pathwise uniqueness: case $\theta\cong0$}
\label{tab3}
\begin{tabular}{ |c||c|c|c|c|c|}
 \hline
 \multicolumn{6}{|c|}{Pathwise uniqueness} \\
 \hline\hline
 Dimension& 
 Hyperviscosity $\gamma$&
 H\"older coeff.~$\theta$&
 Coeff.~$\mu$&
 Coeff.~$\nu$&
 Noise colour $\rho$\\[3pt]
 \hline
 \multirow{1}{*}{$d=1$}&   
 $1$  & $0^+$   &$\frac14$
 & $\frac14^+$ & $0^+$\\[3pt]
 \hline
 \multirow{1}{*}{$d=2$}&   
 $\frac32^+$  & $0^+$   &$\frac12$
 & $0^+$ & $\frac12^-$\\[3pt]
 \hline
 \multirow{1}{*}{$d=3$}&   
 $\frac94^+$  & $0^+$   &$\frac12$
 & $\frac14^+$ & $\frac12^-$\\[3pt]
 \hline
\end{tabular}
\end{table}
}

\subsection{Navier-Stokes equations with perturbations}
\label{Sc:Navier-Stokes}
We consider here Navier-Stokes equations in the form 
\begin{align*}
\d X -\Delta X \,\d t + 
(X\cdot \nabla)X\,\d t + \nabla p\,\d t
= (-\Delta)^{-\rho}\,\d W \quad
&\text{in } (0,T)\times\OO\,,\\
\nabla\cdot X = 0 \quad&\text{in } (0,T)\times\OO\,,\\
X=0\quad
&\text{in } (0,T)\times\partial\OO\,,\\
X(0)=x \quad&\text{in }\in \OO\,,
\end{align*}
where $\rho\geq0$, $X$ is an $\mathbb R^d$-valued process, 
$-\Delta$ is the classical vectorial extension of the negative Laplace operator, $W$ is an $L^2(\OO;\R^d)$-cylindrical Wiener process, 
and $p$ represents the pressure.

We define the solenoidal vector-valued spaces
	\begin{align*}
		H_\sigma &:= \overline{\{ v \in  C^\infty_0(\OO;\mathbb R^d): \nabla \cdot v = 0 \text{ in } \OO\}}^{L^2(\OO;\mathbb R^d)}\,,
        \\
		V_\sigma &:= \overline{\{ v \in C^\infty_0(\OO;\mathbb R^d) : \nabla \cdot v = 0 \text{ in } \OO\}}^{H^1(\OO;\R^d)}\,.
	\end{align*}
    The Stokes operator $A:V_\sigma\to V_\sigma^*$ is defined as
	the canonical Riesz isomorphism of $V_\sigma$, and can be seen
    as the composition of the classical negative Dirichlet (vectorial)
    Laplacian $A_D$ with the Leray projection.

The nonlinearity $B_1$ is defined on smooth functions 
exactly as in \eqref{B1}. Moreover, 
it is immediate to check that the same computations performed in 
Subsection~\ref{Sc:Burgers} yields that 
$B_1:D(A^\mu)\to D(A^{-\nu})$ is well-defined and
\begin{align*}
\norm{B_1(\varphi)-B_1(\psi)}_{D(A^{-\nu})}\leq
C
\left[\norm{\varphi}_{D(A^\mu)}+\norm{\psi}_{D(A^\mu)}\right]
\norm{\varphi-\psi}_{D(A^{\mu})} \quad\forall\,\varphi,\psi\in
D(A^\mu)\,,
\end{align*}
provided that $\mu\in[\frac14,\frac12]$, $\nu\geq\frac12-\mu$,
and $2\mu+\nu>\frac{d+2}4$.

We can then frame equation \eqref{NS} in abstract form 
in the space $H_\sigma$, by 
possibly including also hyperviscous diffusions and further 
nonlinear perturbations:
\begin{align}
\label{NS}
\d X + A^\gamma X \,\d t + 
B_1(X)\,\d t= B_2(X)\,\d t+A^{-\rho}\,\d W\,, 
\qquad X(0)=x\,.
\end{align}
Here, $\gamma\geq1$ is a parameter that describes 
the viscous diffusion effect and 
$B_2\in C^\theta_{{\rm loc}}(D(A^{\mu});D(A^{-\nu}))$ for some $\theta\in(0,1)$ has linear growth. 
Note that if $B_2\equiv 0$ and $\gamma=1$, then \eqref{NS} 
is the classical Navier-Stokes equation. Otherwise, $B_2$ can 
be considered as a suitable perturbation.

It is clear, at least for what concerns our analysis,
that the setting is entirely analogous to the one described in 
Subsection~\ref{Sc:Burgers} for the Burgers equation.
Hence, the range of the coefficients $\gamma, \theta, \mu,\nu,\rho$ for which weak or pathwise uniqueness hold
is the same as the one presented in Subsection~\ref{Sc:Burgers}.
In particular, the tables \ref{tab1}--\ref{tab2}
are still valid also in this case.
The only difference concerns the characterisation of the 
fractional powers $D(A^s)$, $s\in[0,1]$, that here are 
suitable solenoidal spaces instead.

Let us note that uniqueness by noise for hyperviscous Navier-Stokes
equations has been studied also in \cite{Agresti} for small
hyperviscosity and less regular and sufficiently small initial data.
In this case, however, the notion of uniqueness is weaker,
in the sense that it is intended only with high probability.
The critical case of initial data in $H^{\frac12}$ with $\gamma=1$
has been covered instead in \cite{AyXu}, still under smallness
conditions on the initial data and for uniqueness with 
high probability.

\subsection{Cahn-Hilliard equations with perturbations}
Following \cite{BOS}, in this subsection we consider SPDEs of the form
\begin{align}
\label{cahn-hilliard}
\left\{
\begin{array}{ll}
\d X-\Delta(-\Delta X+F_1(X))\,\d t=
F_2(X,\nabla X, D^2 X)\,\d t+(\operatorname{I}-\Delta)^{-\rho}\,\d W & \textrm{in }(0,T)\times \mathcal O\,,\\
\partial_{\bm n}X=\partial_{\bm n} \Delta X=0 & \textrm{in }(0,T)\times \partial \mathcal O\,,\\
X(0)=x_0 & \textrm{in }\mathcal O\,.
\end{array}
\right.
\end{align}
Here, $F_1:\R\to \R$ is a $C^2$-function with locally Lipschitz-continuous derivatives, $F_2$ is a $\theta$-H\"older continuous function for some $\theta\in(0,1)$, $W$ is a cylindrical Wiener process on $H=L^2(\mathcal O)$, $\partial_{\bm n}$ denotes the outer normal derivative on $\partial \mathcal O$ and $x_0\in L^2(\mathcal O)$.  Since
\begin{align*}
\Delta F_1(\varphi)={\rm div}[F'_1(\varphi)\nabla\varphi]
= F_1''(\varphi)|\nabla\varphi|^2+F_1'(\varphi)\Delta\varphi\,,
\qquad \varphi\in H^2(\mathcal O)\,,
\end{align*}
we introduce the operator $B_1:H^2(\mathcal O)\to L^2(\mathcal O)$ defined as
\begin{align*}
B_1(\varphi)(\xi)=-F_1''(\varphi(\xi))|\nabla \varphi(\xi)|^2-F'_1(\varphi(\xi))\Delta\varphi(\xi)\,,\qquad
\xi\in \mathcal O\,,\quad
\varphi\in H^2(\mathcal O)\,,  
\end{align*}
which is well-defined, measurable, and locally Lipschitz-continuous.
Indeed, since 
$F_1'$ and $F_1''$ are locally Lipschitz-continuous, 
for every $\varphi_1, \varphi_2\in H^2(\OO)$, we have
\begin{align*}
  \norm{B_1(\varphi_1)-B_1(\varphi_2)}_H
  &\leq \norm{(F_1''(\varphi_1)-F_1''(\varphi_2))|\nabla\varphi_2|^2}_H
  +\norm{F_1''(\varphi_1)(|\nabla\varphi_1|^2-|\nabla\varphi_2|^2)}_H\\
  &\quad +\norm{(F_1'(\varphi_1)-F_1'(\varphi_2))\Delta\varphi_2}_H
  +\norm{F_1'(\varphi_1)(\Delta\varphi_1-\Delta\varphi_2)}_H\\
&\leq C\left(\norm{\varphi_1}_{L^\infty(\OO)}
+\norm{\varphi_2}_{L^\infty(\OO)}\right)
\left(\norm{\nabla\varphi_2}_{L^4(\OO)}^4
+\norm{\Delta\varphi_2}_H\right)
\norm{\varphi_1-\varphi_2}_{L^\infty(\OO)}\\
&\quad+C\left(1+\norm{\varphi_1}_{L^\infty(\OO)}\right)
\left(\norm{\nabla\varphi_1}_{L^3(\OO)}
+\norm{\nabla\varphi_2}_{L^3(\OO)}\right)
\norm{\nabla(\varphi_1-\varphi_2)}_{L^6(\OO)}\\
&\quad+C\left(1+\norm{\varphi_1}_{L^\infty(\OO)}\right)
\norm{\Delta\varphi_1-\Delta\varphi_2}_H\,,
\end{align*}
and by using that $H^2(\OO)\subseteq L^\infty(\OO)$ and
$H^1(\OO)\subseteq L^6(\OO)$, we get 
\[
\norm{B_1(\varphi_1)-B_1(\varphi_2)}_H
\leq C\left(1+\norm{\varphi_1}_{H^2(\OO)}^5
+\norm{\varphi_2}_{H^2(\OO)}^5\right)\norm{\varphi_1-\varphi_2}_{H^2(\OO)}\,.
\]
Furthermore, we introduce the operator $B_2:H^2(\mathcal O)\to L^2(\mathcal O)$, defined as
\begin{align*}
B_2(\varphi)(\xi)
= F_2(\varphi(\xi), \nabla \varphi(\xi), D^2\varphi(\xi)), \qquad 
\xi\in\mathcal O\,,\quad
\varphi\in H^2(\mathcal O)\,.
\end{align*}
Hence, if we set $A:=\operatorname{I}+A_N$, 
then the SPDE \eqref{cahn-hilliard} reads as
\begin{align*}
\d X+A^2X\,\d t
=B(X)\,\d t +(A^2)^{-\frac{\rho}{2}}\,\d W\,, \qquad X(0)=x_0\in H,
\end{align*}
where $B:D(A)\to H$ is the operator defined as 
\[
B(\varphi)=
B_2(\varphi)-B_1(\varphi)
+\varphi-2\Delta\varphi\,, \quad
\varphi\in D(A)= D(A_N)\,.
\]
In the notation of Theorems~\ref{th:1}, \ref{th:2}, and \ref{th:3},
we have $\alpha=\frac12$, $\beta=0$, and $\delta=\frac\rho2$.
It is immediate to check that assumptions \ref{H0}--\ref{H4}
are satisfied for $\rho\in\left(\frac d4,1\right]$, while 
\ref{H5} for $\rho\in \left(\frac d4, 1\right)$.
Moreover, \eqref{trace1} yields $(1-\theta)\rho<1-\frac d4$, 
while \eqref{ineq2_al} yields $(1-\theta)\rho<1$ which is weaker than \eqref{trace1}.
The results contained in Theorems~\ref{th:1}, \ref{th:2}, and \ref{th:3} can be then collected as follows.
\begin{itemize}
\item Weak uniqueness holds for initial data in 
$D(A_N)$ for 
\[ \rho\in\left(\frac d4, 1\right]\,,
\qquad\theta\in(0,1)\,.
\]
\item Pathwise uniqueness holds for initial data in $D(A_N)$
for 
\[ \rho\in\left(\frac d4, 1\right)\,,
\qquad\theta\in(0,1)\,,
\qquad (1-\theta)\rho<1-\frac d4\,. 
\]
\end{itemize}

Let us stress that these results hold for every growth of the 
nonlinearity $F_1$, but can be refined under further
growth conditions on $F_1$. 
If one considers the classical case where
$F_1$ is the derivative of 
the paradigmatic quartic polynomial potential, then the operator
$B_1$ can be seen equivalently as a well-defined locally Lipschitz operator 
$B_1:H^1(\OO)\to H^{-2}(\OO)$ thanks to the inclusion 
$H^1(\OO)\subseteq L^6(\OO)$. Indeed, 
\begin{align*}
  \norm{B_1(\varphi_1)-B_1(\varphi_2)}_{H^{-2}(\OO)}
  &=\norm{F_1(\varphi_1)-F_1(\varphi_2)}_H\\
  &\leq C\norm{\left(1+|\varphi_1|^2+|\varphi_2|^2\right)
  (\varphi_1-\varphi_2)}_H\\
  &\leq C\left(1+\norm{\varphi_1}_{L^6(\OO)}^6
  +\norm{\varphi_2}_{L^6(\OO)}^6\right)
  \norm{\varphi_1-\varphi_2}_{L^6(\OO)}
\end{align*}
and one can exploit the embedding $H^1(\OO)\subseteq L^6(\OO)$.
Hence, if $F_1$ satisfies such further growth condition, 
in the setting presented above one can take $\alpha=\frac14$ and 
$\beta=\frac12$. This yields the condition
$\rho\in\left(\frac d4-\frac12, 1\right]$ for \ref{H0}--\ref{H4}, 
$\rho(1-\theta)<\frac\theta2-\frac d4$ for \eqref{trace1}, 
and $\rho(1-\theta)<\frac12+\frac\theta2$ for \eqref{ineq2_al}
which is weaker than \eqref{trace1}.
The results contained in Theorems~\ref{th:1}, \ref{th:2}, and \ref{th:3} can be then collected as follows.
\begin{itemize}
\item Weak uniqueness holds for initial data in 
$D(A_N^{\frac12})=H^1(\OO)$ for 
\[ \rho\in\left(\frac d4-\frac12, 1\right]\,,
\qquad\theta\in(0,1)\,.
\]
\item Pathwise uniqueness holds for initial data in 
$D(A_N^{\frac12})=H^1(\OO)$
for 
\[ \rho\in\left(\frac d4-\frac12, 1\right)\,,
\qquad\theta\in(0,1)\,,
\qquad (1-\theta)\rho<\frac\theta2-\frac d4\,.
\]
Note that pathwise uniqueness with initial data in $H^1(\OO)$ requires $d=1$.
\end{itemize}

\subsection{Reaction-diffusion equations with perturbations}
Let us consider SPDEs in the form
  \begin{align*}
  \d X - \Delta X\,\d t  = 
  F_1(X)\,\d t +F_2(X)\,\d t
  + (-\Delta)^{-\delta}\,\d W \quad&\text{in } (0,T)\times\OO\,,\\
  X=0\quad&\text{in } (0,T)\times\partial\OO\,,\\
  X(0)=x_0 \quad&\text{in } \OO\,,
  \end{align*}
where $F_1:\R\to\R$ is locally Lipschitz-continuous,
$F_2\in C^{\theta}(\R)$ for some $\theta\in (0,1)$, 
and there exist $p>2$ and $C>0$ such that 
\[
  |F_1(u)-F_1(v)|\leq C(1+|u|^{p-2}+|v|^{p-2})|u-v|
  \quad\forall\,u,v\in\R\,.
\]
Given $r\in[\max\{2,p-1\}, 2(p-1)]$, 
setting $s:=\frac{r}{p-1}\in[1,2]$
one can define
\[
  B_i:L^r(\OO)\to L^s(\OO)\,, \quad i=1,2\,,
\]
as, for $i=1,2$,
\begin{align*}
  B_i(\varphi)(\xi)&:=
  F_i(\varphi(\xi)),
  \quad\xi\in\OO\,,
  \quad\varphi\in L^r(\OO)\,.
\end{align*}
Note that $B_1$ is locally-Lipschitz continuous: indeed, for all 
$\varphi_1,\varphi_2\in L^r(\OO)$,
since $r>s=\frac r{p-1}$, 
by the H\"older inequality we have
\begin{align*}
    \norm{B_1(\varphi_1)-B_1(\varphi_2)}_{L^s(\OO)}
    &\leq C\norm{(1+|\varphi_1|^{p-2}+|\varphi_2|^{p-2})|\varphi_1-\varphi_2|}_{L^s(s)}\\
    &\leq C\left(1+
    \norm{\varphi_1}^{p-2}_{L^{\frac{sr(p-2)}{r-s}}(\OO)}+
    \norm{\varphi_2}^{p-2}_{L^{\frac{sr(p-2)}{r-s}}(\OO)}\right)
    \norm{\varphi_1-\varphi_2}_{L^{r}(\OO)}\\
    &=C\left(1+
    \norm{\varphi_1}^{p-2}_{L^r(\OO)}+
    \norm{\varphi_2}^{p-2}_{L^r(\OO)}\right)
    \norm{\varphi_1-\varphi_2}_{L^{r}(\OO)},
\end{align*}
Moreover, note that $B_2 \in C^\theta(L^r(\OO), L^s(\OO))$: indeed, 
for all 
$\varphi_1,\varphi_2\in L^r(\OO)$, we have 
\begin{align*}
    \norm{B_2(\varphi_1)-B_2(\varphi_2)}_{L^s(\OO)}
    &\leq \norm{F_2}_{C^\theta(\R)}
    \norm{|\varphi_1-\varphi_2|^\theta}_{L^s(\OO)}\\
    &=\norm{F_2}_{C^\theta(\R)}
    \norm{\varphi_1-\varphi_2}^\theta_{L^{\frac{r\theta}{p-1}}(\OO)}
\end{align*}
and we conclude since $\frac{r\theta}{p-1}\leq r$.
It follows that, if we define
\[
  B:=B_1+B_2:L^r(\OO)\to L^s(\OO)\,,
\]
then $B$ is locally $\theta$-H\"older-continuous. 
We can then formulate the SPDE in abstract form as
\[
  \d X + A_DX\,\d t = B(X)\,\d t + A_D^{-\delta}\,\d W\,,
  \qquad X(0)=x_0\,.
\]
Now, we choose $\alpha\in[0,1)$ and $\beta\in\left[0,\frac12\right]$
such that $\beta+\delta\in\left[0,\frac12\right]$,
$D(A_D^\alpha)\subseteq L^r(\OO)$,
and $L^s(\OO)\subseteq D(A_D^{-\beta})$, so that 
$B$ can be considered as a locally $\theta$-H\"older-continuous 
operator
\[
  B:D(A_D^\alpha)\to D(A_D^{-\beta})\,.
\]
By the classical Sobolev embeddings, 
these inclusions
are satisfied if
$\alpha=\frac{d}{2}(\frac12-\frac1r)=\frac{d(r-2)}{4r}$ and 
$\beta=\frac{d}2(\frac1s-\frac12)=\frac{d(2(p-1)-r)}{4r}$. 
With this choice, 
it is immediate to see that $\alpha\geq0$ and $\beta\geq0$.

The main assumptions of Theorems~\ref{th:1}, \ref{th:2}, and \ref{th:3}
can be verified in the same way as in Subsection~\ref{ssec:heat}. In particular, 
\ref{H0}--\ref{H2} and \ref{H4} are satisfied, while \ref{H3} requires that $\alpha+\beta<1-\frac d4$
and some $\delta\in\left(\frac{d}{4} +\alpha- \frac{1}{2},\frac12-\beta\right]$, 
namely $r>\frac{2d(p-2)}{4-d}$. This is compatible with the range of $r$ if
$\frac{d(p-2)}{4-d}<p-1$, i.e.~$p(2d-4)<3d-4$: this is always satisfied if $d=1,2$, while if $d=3$ it yields the growth condition $p<\frac52$. Eventually, conditions \eqref{trace1}--\eqref{ineq2_al} are satisfied
with $\alpha+\beta<1-\frac d4\left(2-\theta\right)$
and $\theta>1-\frac 2d$, namely 
$\frac{d(p-2)}{2r}<1-\frac d4(2-\theta)$, 
i.e~$r>\frac{2d(p-2)}{4-d(2-\theta)}$
and $\theta>2-\frac4d$.
This is compatible with the range of $r$ if
$\frac{d(p-2)}{4-d(2-\theta)}<p-1$, 
i.e.~$p((3-\theta)d-4)<(4-\theta)d-4$: again, 
this is always satisfied if $d=1$, while if $d=2$ or $d=3$
it yields the growth conditions 
$p<\frac{2-\theta}{1-\theta}$
and $p<\frac{8-3\theta}{5-3\theta}$, respectively.
The results contained in Theorems~\ref{th:1}, \ref{th:2}, and \ref{th:3} can be then collected as follows.

{\sc Dimension} $d=1$.
Weak and pathwise uniqueness hold for every $\theta\in(0,1)$,
both for $p\in(2,3]$ with initial data in $L^2(\OO)$,
and for $p>3$ with initial data in $D(A_D^{\frac{p-3}{4(p-1)}})$.

{\sc Dimension} $d=2$.
Weak uniqueness holds for every $\theta\in(0,1)$,
both for $p\in(2,3]$ with initial data in $L^2(\OO)$,
and for $p>3$ with initial data in $D(A_D^{\frac{p-3}{2(p-2)}})$.
Pathwise uniqueness holds for every $\theta\in(0,1)$,
both for $p\in(2,2+\theta]$ 
with initial data in $L^2(\OO)$,
and for $p\in\left(2+\theta, 2+\frac\theta{1-\theta}\right)$
with initial data in 
$D(A_D^{\frac{p-2-\theta}{2(p-2)}})$.

{\sc Dimension} $d=3$.
Weak uniqueness holds for every $\theta\in(0,1)$,
both for $p\in(2,\frac73]$ with initial data in $L^2(\OO)$,
and for $p>\frac73$ with initial data in 
$D(A_D^{\frac{3p-7}{4(p-2)}})$.
Pathwise uniqueness holds for every $\theta\in\left(\frac23,1\right)$,
both for $p\in\left(2,2+(\theta-\frac23)\right]$ 
with initial data in $L^2(\OO)$,
and for $p\in\left(2+(\theta-\frac23), 2+\frac{3\theta-2}{5-3\theta}\right)$
with initial data in 
$D(A_D^{\frac{3p-4-3\theta}{4(p-2)}})$.

\appendix
\section{Extension to general initial data}
\label{sec:app}
We present here the technicalities to 
extend the uniqueness and continuous dependence results 
to the case of less regular initial data.

Throughout this section, we work under the following assumptions.
\begin{enumerate}[start=0,label={{(A\arabic*})}]
\item \label{A0} $\mathcal H$ is a real separable Hilbert space
with scalar product $(\cdot, \cdot)_{\mathcal H}$. 
\item \label{A1} $\mathcal A: D(\mathcal A)\subseteq \mathcal H\rightarrow \mathcal H$ is a linear self-adjoint operator with dense domain. Moreover there exists an orthonormal basis $\{\tilde e_k:k\in\N_+\}$ of ${\mathcal H}$ consisting of eigenvectors of ${\mathcal A}$ and an increasing sequence $(\lambda_k)_{k\in\N_+}$ such that $\lambda_1>0$, $\lambda_k\to+\infty$ as $k\to\infty$ and
\[
\mathcal A\tilde e_k=\lambda_k \tilde e_k \quad\forall\,k\in\N_+\,.
\]
\item 
${\tilde \alpha}\in[0,1)$,
${\tilde \delta}\in(-\frac12, \frac12]$,
and ${\tilde { \beta}}\in[0,\frac12-{\tilde \delta}]$.
\item \label{A3} There exists $\eta\in (0,1)$ such that ${\mathcal A}^{-(1+2 {\tilde \delta})+\eta}\in\cL^1(\mathcal H,\mathcal H)$.
\item \label{A4} 
$B\in C^\theta_{{\rm loc}}(\mathcal H;D(\mathcal A^{-{\tilde \beta}}))
\cap C^0_b(\mathcal H;D(\mathcal A^{-{\tilde \beta}}))$ for some $\theta\in (0,1)$.
\item \label{A5} 
In the regime ${\tilde \beta}+{\tilde \delta}=\frac12$ there exists $z_0\in D({\mathcal A}^{-{\tilde \beta}})$ such that 
\[
\sup_{x\in {\mathcal H}}\norm{B(x)-z_0}_{D({\mathcal A}^{-{\tilde \beta}})}<
\frac{\theta(1-\theta)(1-{\tilde \beta})}
{4M_{{\tilde \beta},{\tilde \delta},\theta}(2-\theta)}\,,
\]
where $M_{{\tilde \beta},{\tilde \delta},\vartheta}$ is the implicit constant 
appearing in Proposition~\ref{stimeHolder}.
\end{enumerate}

First, we adapt the concept of solution given in Definition~\ref{def-sol} for the equation
\begin{equation}
    \label{SDE-app}
    \d X + \mathcal AX\,\d t = B(X)\,\d t + {\mathcal A}^{-\tilde \delta}\,\d\mathcal  W\,, \qquad X(0)=x\,,
\end{equation}
to the case of less regular initial data $x\in D(\mathcal A^{-\tilde\alpha})$.

\begin{defn}\label{def-sol-app}
Assume \ref{A0}--\ref{A4} and let $x\in D({\mathcal A}^{-{\tilde \alpha}})$.
A weak mild solution to \eqref{SDE-app} 
is a pair $(X,\mathcal W)$, where $\mathcal W$ is a ${\mathcal H}$-cylindrical Wiener process on a filtered probability space 
$(\Omega,\mathcal{F},\{\mathcal{F}_t\}_{t\geq0},\mathbb{P})$ and 
$X$ is a ${\mathcal H}$-valued progressively measurable process such that 
\begin{align*}  
X \in C^0(\mathbb R_+; D({\mathcal A}^{-{\tilde \alpha}}))
\cap C^0((0,+\infty); {\mathcal H}) \quad \P\text{-a.s.}
\end{align*}
and
\begin{align}\label{mild-app}
X(t)=e^{-t{\mathcal A}}x+\int_0^te^{-(t-s){\mathcal A}}B(X(s))\,\d s+\mathcal W_{\mathcal A}(t)
\quad\forall\,t\geq0\,, \quad \mathbb{P}\text{-a.s.}
\end{align}
where $W_{\mathcal A}$ is the stochastic convolution process given by 
\begin{equation*}
\mathcal W_{\mathcal A}(t):=\int^t_0
e^{-(t-s)\mathcal A}{\mathcal A}^{-\tilde \delta}\,\d \mathcal W(s)\,, \quad t\geq0\,.
\end{equation*}
\end{defn}

\begin{rmk}
We note that the integral formulation \eqref{mild-app}
is well-defined in $D({\mathcal A}^{-{\tilde \alpha}})$. 
Indeed, since $x\in D({\mathcal A}^{-{\tilde \alpha}})$ it holds that 
$t\mapsto e^{-t{\mathcal A} }x$ is continuous with values in $D({\mathcal A}^{-{\tilde \alpha}})$.
Moreover, since $B$ is bounded by \ref{A4},
the deterministic convolution is continuous with values in 
$D({\mathcal A}^{-{\tilde \alpha}})$
since $1-{\tilde \beta}>-{\tilde \alpha}$. Eventually, 
by \ref{A3} $\mathcal W_{\mathcal A}$ is a Gaussian process with continuous trajectories in ${\mathcal H}$, see \cite[Chap.~4]{Dap-Zab14}.
\end{rmk}

Now, we need to adapt Proposition~\ref{prop:Xn}
on the convergence of approximations. For every $n\in\enne_+$, 
let $X_n$ be defined as 
\begin{align}\label{eq-approssimata-app}
X_n(t)+\int_0^t\mathcal A_nX_n(s)\,\d s
=\mathcal P_nx + \int_0^tB_n(X(s))\,\d s
+\int_0^t\mathcal A_n^{-\tilde\delta}\,\d \mathcal W(s)\,, \quad\forall\,t\geq0\,,
\quad\P\text{-a.s.},
\end{align}
where $\mathcal P_n:\mathcal H\to\mathcal H$ 
is the orthogonal projection on $\mathcal H_n:=\operatorname{span}\{\tilde e_1,\ldots,\tilde e_n\}$.

\begin{proposition}\label{prop:Xn_alpha}
Assume \ref{A0}--\ref{A4} hold  and let $x\in D({\mathcal A}^{-{\tilde \alpha}})$. Then, 
for every $T>0$ we have
\begin{equation}\label{supE_alpha}
\lim_{n \to \infty}\sup_{t\in (0,T]}t^{{\tilde \alpha}}\mathbb{E}
\|X_{n}(t)-X(t)\|_{\mathcal H}=0\,.
\end{equation} 
\end{proposition}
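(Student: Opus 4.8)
The plan is to reproduce, in the weighted norm $t^{\tilde\alpha}\E\norm{\cdot}_{\mathcal H}$, the three-term splitting used in the proof of Proposition~\ref{prop:Xn}. Using $\mathcal A_n=\mathcal P_n\mathcal A$, $B_n=\mathcal P_nB$ and the fact that $\mathcal P_n$, $\mathcal A$ and $e^{-t\mathcal A}$ all commute, I would write
\[
X_n(t)-X(t)=J_n^0(t)+J_n^1(t)+J_n^2(t),
\]
where $J_n^0(t):=e^{-t\mathcal A}(\mathcal P_n-\Id)x$ is the initial-datum error, $J_n^1(t):=\int_0^te^{-(t-s)\mathcal A}(\mathcal P_n-\Id)B(X(s))\,\d s$ is the deterministic convolution error, and $J_n^2(t):=\mathcal W_{\mathcal A_n}(t)-\mathcal W_{\mathcal A}(t)=\int_0^te^{-(t-s)\mathcal A}\mathcal A^{-\tilde\delta}(\mathcal P_n-\Id)\,\d\mathcal W(s)$ is the stochastic convolution error. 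The aim is to show that each of the three, multiplied by $t^{\tilde\alpha}$, tends to $0$ uniformly on $(0,T]$.

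The genuinely delicate term is $J_n^0$, since here the weight $t^{\tilde\alpha}$ is exactly what compensates the lack of regularity of the datum $x\in D(\mathcal A^{-\tilde\alpha})$. Exploiting once more that $\mathcal P_n$ commutes with the fractional powers of $\mathcal A$, I would rewrite $t^{\tilde\alpha}\norm{J_n^0(t)}_{\mathcal H}=t^{\tilde\alpha}\norm{\mathcal A^{\tilde\alpha}e^{-t\mathcal A}(\mathcal P_n-\Id)\mathcal A^{-\tilde\alpha}x}_{\mathcal H}$ and use the analytic smoothing bound $\norm{\mathcal A^{\tilde\alpha}e^{-t\mathcal A}}_{\cL(\mathcal H)}\le M_{\tilde\alpha}t^{-\tilde\alpha}$ (the analogue of \eqref{semiS}) to cancel the singular factor, so that
\[
\sup_{t\in(0,T]}t^{\tilde\alpha}\norm{J_n^0(t)}_{\mathcal H}\le M_{\tilde\alpha}\norm{(\mathcal P_n-\Id)\mathcal A^{-\tilde\alpha}x}_{\mathcal H}.
\]
As $\mathcal A^{-\tilde\alpha}x\in\mathcal H$ by hypothesis and $\mathcal P_n\to\Id$ strongly on $\mathcal H$, the right-hand side vanishes as $n\to\infty$.

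For the two remaining terms the weight is harmless, being bounded by $T^{\tilde\alpha}$, and one proceeds essentially as in Proposition~\ref{prop:Xn}. For $J_n^1$, using the boundedness of $B$ in $D(\mathcal A^{-\tilde\beta})$ granted by \ref{A4}, the non-expansiveness of $\mathcal P_n$ on $D(\mathcal A^{-\tilde\beta})$, the bound $\norm{\mathcal A^{\tilde\beta}e^{-(t-s)\mathcal A}}_{\cL(\mathcal H)}\le M_{\tilde\beta}(t-s)^{-\tilde\beta}$ with $\tilde\beta<1$, and the H\"older inequality with exponents $q\in(1,1/\tilde\beta)$ and $p=q/(q-1)$, one controls $t^{\tilde\alpha}\norm{J_n^1(t)}_{\mathcal H}$ by a fixed constant depending on $T,\tilde\beta$ times $\big(\int_0^T\norm{(\mathcal P_n-\Id)B(X(s))}_{D(\mathcal A^{-\tilde\beta})}^p\,\d s\big)^{1/p}$, which tends to $0$ by dominated convergence since $B(X)\in L^\infty(0,T;D(\mathcal A^{-\tilde\beta}))$ and $(\mathcal P_n-\Id)B(X(s))\to0$ in $D(\mathcal A^{-\tilde\beta})$ for a.e.\ $s$. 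For $J_n^2$, the It\^o isometry gives $\E\norm{J_n^2(t)}_{\mathcal H}^2=\int_0^t\sum_{k=n+1}^\infty e^{-2s\lambda_k}\lambda_k^{-2\tilde\delta}\,\d s\le\frac12\sum_{k=n+1}^\infty\lambda_k^{-1-2\tilde\delta}$, a quantity independent of $t$ vanishing as $n\to\infty$ by \ref{A3}; combining with Jensen's inequality yields $t^{\tilde\alpha}\E\norm{J_n^2(t)}_{\mathcal H}\le T^{\tilde\alpha}\big(\frac12\sum_{k=n+1}^\infty\lambda_k^{-1-2\tilde\delta}\big)^{1/2}$.

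The main obstacle is precisely the handling of $J_n^0$: this is where the weight $t^{\tilde\alpha}$ is indispensable, and the argument rests on the commutation of $\mathcal P_n$ with the fractional powers of $\mathcal A$ (which transfers the truncation error onto the regularised datum $\mathcal A^{-\tilde\alpha}x\in\mathcal H$) together with the smoothing bound of exponent exactly $\tilde\alpha$. Once the three uniform-in-$t$ estimates above are collected, taking the supremum over $(0,T]$ and letting $n\to\infty$ delivers \eqref{supE_alpha}.
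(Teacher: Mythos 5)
Your proposal is correct and follows essentially the same route as the paper: the same three-term splitting (initial datum, deterministic convolution, stochastic convolution), with the two convolution errors handled exactly as in Proposition~\ref{prop:Xn}, and the initial-datum term controlled through the weighted smoothing bound $t^{\tilde\alpha}\norm{e^{-t\mathcal A}}_{\mathscr L(D(\mathcal A^{-\tilde\alpha}),\mathcal H)}\leq C_{\tilde\alpha}$ applied to $\mathcal P_n x - x$. Your derivation of that bound via $\norm{\mathcal A^{\tilde\alpha}e^{-t\mathcal A}}_{\cL(\mathcal H)}\leq M_{\tilde\alpha}t^{-\tilde\alpha}$ and the commutation of $\mathcal P_n$ with $\mathcal A^{-\tilde\alpha}$ is just a repackaging of the paper's direct spectral computation, since $\norm{z}_{D(\mathcal A^{-\tilde\alpha})}=\norm{\mathcal A^{-\tilde\alpha}z}_{\mathcal H}$.
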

\begin{proof}
The proof is similar to the one of Proposition~\ref{prop:Xn}:
the only point to adapt is the convergence of the initial data.
To this end, we note that
from hypothesis \ref{A1} it follows that, for every $t>0$ and $z\in {\mathcal H}$,
\begin{align*}
\|e^{-t{\mathcal A}}z\|_{\mathcal H}^2
&=\sum_{k=1}^\infty
\left(e^{-t{\mathcal A}}z, \tilde e_k\right)^2_{{\mathcal H}}
=  \sum_{k=1}^\infty \lambda_k^{2{\tilde \alpha}}e^{-2\lambda_kt}
({\mathcal A}^{-{\tilde \alpha}}z,\tilde e_k)_{\mathcal H}^2
=\frac1{t^{2{\tilde \alpha}}}\sum_{k=1}^\infty (\lambda_k t)^{2{\tilde \alpha}}e^{-2\lambda_kt}
({\mathcal A}^{-{\tilde \alpha}}z,\tilde e_k)_{\mathcal H}^2
\\
&\leq \frac{\max_{r\geq0}(r^{2{\tilde \alpha}}e^{-2r})}
{t^{2{\tilde \alpha}}}\sum_{k=1}^\infty( {\mathcal A}^{-{\tilde \alpha}}z,\tilde e_k)_{\mathcal H}^2
= C_{\tilde \alpha} t^{-2{\tilde \alpha}}\|z\|_{D({\mathcal A}^{-{\tilde \alpha}})}^2,
\end{align*}
where $C_{\tilde \alpha}$ is a positive constant which only depends on ${\tilde \alpha}$. We infer that $t^{{\tilde \alpha}}\|e^{-t{\mathcal A}}\|_{\mathscr L(D({\mathcal A}^{-{\tilde \alpha}}),{\mathcal H})}\leq C_{\tilde \alpha}$ for every $t>0$ by density. 
Moreover, one has 
\begin{align*}
\|\mathcal P_nx-x\|_{D({\mathcal A}^{-{\tilde \alpha}})}^2=
\norm{\sum_{k=n+1}^\infty({\mathcal A}^{-{\tilde \alpha}}x,\tilde e_k)_{\mathcal H}\tilde e_k
}_{{\mathcal H}}^2
= \sum_{k=n+1}^\infty ( {\mathcal A}^{-{\tilde \alpha}}x,\tilde e_k)_{{\mathcal H}}^2\,.
\end{align*}
By taking these remarks into account, we obtain that 
\begin{align*}
\sup_{t\geq0}{t^{{\tilde \alpha}}}
\norm{e^{-t{\mathcal A}_n} \mathcal P_nx-e^{-t{\mathcal A}}x}_{\mathcal H}
&=\sup_{t\geq0}{t^{{\tilde \alpha}}}\norm{e^{-t{\mathcal A}}
(\mathcal P_nx-x)}_{\mathcal H}\\
&\leq 
\sup_{t\geq0}t^{{{\tilde \alpha}}}\norm{e^{-t{\mathcal A}}}_{\mathscr{L}({D({\mathcal A}^{-{\tilde \alpha}})},{\mathcal H})}
\norm{\mathcal P_nx-x}_{{D({\mathcal A}^{-{\tilde \alpha}})}}.
\end{align*}
It follows that
\begin{align*}
\lim_{n\to\infty}\sup_{t\geq0}{t^{{\tilde \alpha}}}
\norm{e^{-t{\mathcal A}_n} \mathcal P_nx-e^{-t{\mathcal A}}x}_{\mathcal H}=0.    
\end{align*}
The proof can be concluded then by proceeding as in 
the one of Proposition~\ref{prop:Xn}.
\end{proof}

We are now ready to present the technical argument to extend 
the pathwise uniqueness and continuous dependence to general initial data.
\begin{proposition}
\label{th:uniq_path-app}
Assume \ref{A0}--\ref{A5} and that
\begin{align}
\label{trace_app}
&\exists\,\varepsilon \in (0,1):\quad
{\mathcal A}^{-(1+\theta)+2{\tilde \beta}
+2(1-\theta){\tilde \delta}+2\varepsilon}\in\cL^1({\mathcal H},{\mathcal H})\,,\\
\label{ineq_alfa_app}
   &{\tilde \alpha}\leq {\tilde \beta}\,, \qquad
   {\tilde \alpha}+(1-\theta){\tilde \delta}<\frac\theta2\,.
\end{align}
Then, for every $T>0$ there exists a constant $\mathfrak L>0$, depending only on the structural data ${\tilde \alpha},{\tilde \beta},{\tilde \delta},\theta,T$, such that, 
for every weak mild solutions $(X,\mathcal W)$ and $(Y,\mathcal  W)$ to \eqref{SDE-app} with initial data $x,y\in D({\mathcal A}^{-{\tilde \alpha}})$, respectively, in the sense of Definition~\ref{def-sol-app}, with the same cylindrical Wiener process $\mathcal W$ and defined on the same probability space, it holds that 
\begin{equation}
\label{cont_dep-app}
\|X-Y\|_{C^0([0,T];L^2(\Omega; D({\mathcal A}^{-{\tilde \alpha}}))}\leq 
\mathfrak L\|x-y\|_{D({\mathcal A}^{-{\tilde \alpha}})}.
\end{equation}
\end{proposition}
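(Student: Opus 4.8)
The plan is to run the It\^o--Tanaka scheme of Step~1 in the proof of Theorems~\ref{th:2}--\ref{th:3}, but measuring every object in the weaker norm $\|\cdot\|_{D(\mathcal A^{-\tilde\alpha})}$ in place of the $\mathcal H$-norm. First I would introduce the Galerkin approximations $X_n=\mathcal P_nX$, $Y_n=\mathcal P_nY$ solving \eqref{eq-approssimata-app}, the scalar forcings $g_k(x):=\langle B(x),\tilde e_k\rangle$ with $\|g_k\|_{C^\theta_b(\mathcal H)}\le C_B\lambda_k^{\tilde\beta}$, and the associated Kolmogorov solutions $u_{n,k}$ together with $u_n:=\sum_{k=1}^n u_{n,k}\tilde e_k$ (the construction of Proposition~\ref{prop:kolm} and the estimates of Proposition~\ref{prop:est} apply in $\mathcal H$ since $B$ is bounded by \ref{A4}). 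Applying It\^o's formula to $u_{n,k}(X_n)$, inserting the Kolmogorov equation \eqref{eq:kolm_k} and passing to the mild form exactly as for \eqref{diff}, I would obtain $X_n(t)-Y_n(t)=I_0+\sum_{j=1}^7 I_j(t)$ and estimate each $I_j$ in $C^0([0,T_0];L^2(\Omega;D(\mathcal A^{-\tilde\alpha})))$ for a suitably small $T_0$.

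The key algebraic observation is that computing a $D(\mathcal A^{-\tilde\alpha})$-norm inserts a weight $\lambda_k^{-2\tilde\alpha}$ into each spectral sum, and for the deterministic terms $I_0,I_1,I_2$ this is absorbed by using the gradient bound \eqref{est1'} with $\gamma=\tilde\alpha$: the quantity $\lambda_k^{-2\tilde\alpha}\|\mathcal A_n^{\tilde\alpha}Du_{n,k}\|^2$ carries precisely the same power of $\lambda_k$ as $\|Du_{n,k}\|^2$ in Step~1, so the same series $s_1$ reappears (now formed with $\tilde\beta,\tilde\delta$ and finite by \eqref{trace_app}). This is exactly where the first condition $\tilde\alpha\le\tilde\beta$ in \eqref{ineq_alfa_app} enters, since \eqref{est1'} requires $\gamma\in[0,\tilde\beta]$. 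The pairings with $B_n(X)-B_n(X_n)$ in $I_3,I_4$ are treated with \eqref{est1'} at $\gamma=\tilde\beta$ through the duality $\langle Du_{n,k},\,\cdot\,\rangle_{D(\mathcal A^{\tilde\beta}),D(\mathcal A^{-\tilde\beta})}$, while $I_5,I_6$ use $\|\mathcal A_n^{\tilde\beta-\tilde\alpha}e^{-(t-s)\mathcal A_n}\|\lesssim(t-s)^{-(\tilde\beta-\tilde\alpha)}$ with $\tilde\beta-\tilde\alpha\ge0$. All four of these $B$-difference terms are made to vanish as $n\to\infty$ by Proposition~\ref{prop:Xn_alpha}: the H\"older continuity of $B$ produces a factor $\mathbb E\|X_n(s)-X(s)\|_{\mathcal H}^{2\theta}$, which by Proposition~\ref{prop:Xn_alpha} and uniform moment bounds is $\le\epsilon_n\,s^{-2\tilde\alpha\theta}$ with $\epsilon_n\to0$, and the singularity $s^{-2\tilde\alpha\theta}$ is integrable against the time kernels because \eqref{ineq_alfa_app} forces $\tilde\alpha<\tfrac12$, hence $2\tilde\alpha\theta<1$.

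The delicate point is the stochastic term $I_7$. After the It\^o isometry its $D(\mathcal A^{-\tilde\alpha})$-norm is governed by $\|\mathcal A_n^{-\tilde\delta}\big(Du_{n,k}(X_n)-Du_{n,k}(Y_n)\big)\|_{\mathcal H}$ bounded in terms of $\|X_n-Y_n\|_{D(\mathcal A^{-\tilde\alpha})}$, i.e. by the \emph{two-sided} weighted second derivative $\|\mathcal A_n^{-\tilde\delta}D^2u_{n,k}\,\mathcal A_n^{\tilde\alpha}\|_{\cL(\mathcal H_n)}$. Since the one-sided estimate \eqref{est2'} is not enough, I would first prove its two-sided analogue: the explicit formulas \eqref{D20}--\eqref{D21} for $D^2R_{n,t}$ allow an independent factor $\Gamma_{n,t}$ or $e^{-tA_n}$ on each slot, so interpolating the $C^0_b$ and $C^1_b$ bounds (Proposition~\ref{stimeHolder}) yields a kernel of order $t^{-(1-\frac\theta2+\tilde\alpha+(1-\theta)\tilde\delta)}$ for $\varphi\in C^\theta_b$. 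Integrating against $e^{-\bar c\lambda_k t}$ in the mild formulation then gives
\[
\|\mathcal A_n^{-\tilde\delta}D^2u_{n,k}\,\mathcal A_n^{\tilde\alpha}\|_{\cL(\mathcal H_n)}
\le\frac{\mathfrak L\,\lambda_k^{\tilde\beta}}{(\bar c\lambda_k)^{\frac\theta2-\tilde\alpha-(1-\theta)\tilde\delta}},
\]
which is finite precisely because of the second condition $\tilde\alpha+(1-\theta)\tilde\delta<\frac\theta2$ in \eqref{ineq_alfa_app}; once more the $\lambda_k^{-2\tilde\alpha}$ weight cancels and the resulting series is comparable to $s_1$. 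I expect this two-sided estimate to be the main obstacle of the whole argument.

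Collecting all bounds yields
\[
\|X_n-Y_n\|^2_{C^0([0,T_0];L^2(\Omega;D(\mathcal A^{-\tilde\alpha})))}
\le C\|x-y\|^2_{D(\mathcal A^{-\tilde\alpha})}
+\kappa\,\|X_n-Y_n\|^2_{C^0([0,T_0];L^2(\Omega;D(\mathcal A^{-\tilde\alpha})))}+o_n(1),
\]
where $\kappa=\kappa(\bar c,T_0)$ is made $<\tfrac12$ by fixing $\bar c$ large (compatibly with \eqref{c}--\eqref{c'}) and then $T_0$ small. Since $\|X_n-Y_n\|_{D(\mathcal A^{-\tilde\alpha})}=\|\mathcal P_n(X-Y)\|_{D(\mathcal A^{-\tilde\alpha})}\nearrow\|X-Y\|_{D(\mathcal A^{-\tilde\alpha})}$, letting $n\to\infty$ (with Proposition~\ref{prop:Xn_alpha} killing the $o_n(1)$ terms) gives \eqref{cont_dep-app} on $[0,T_0]$; a standard patching over $[0,T]$ extends it to the whole interval. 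Finally, taking $x=y$ yields $X=Y$ in $C^0([0,T];L^2(\Omega;D(\mathcal A^{-\tilde\alpha})))$, whence pathwise uniqueness by continuity of trajectories.
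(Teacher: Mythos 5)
Your proposal is correct, and except for one step it follows the paper's own proof almost verbatim: same mild decomposition $X_n-Y_n=I_0+\sum_{j=1}^7 I_j$ as in \eqref{diff}, same use of \eqref{est1'} with $\gamma=\tilde\alpha$ for $I_0,I_1,I_2$ (which is where $\tilde\alpha\le\tilde\beta$ enters) and with $\gamma=\tilde\beta$ for $I_3,I_4$, same factor $\mathcal A_n^{\tilde\beta-\tilde\alpha}e^{-(t-s)\mathcal A_n}$ for $I_5,I_6$, same weighted-in-time convergence $\sup_{s}s^{2\theta\tilde\alpha}\,\mathbb E\|X_n(s)-X(s)\|_{\mathcal H}^{2\theta}\to0$ via Proposition~\ref{prop:Xn_alpha} (integrable since $2\theta\tilde\alpha<1$), and the same smallness-of-constants and patching conclusion. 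The genuine divergence is $I_7$. The paper does \emph{not} prove a two-sided estimate: it invokes the identity $D(\mathcal A_n^{-\tilde\delta}Du_{n,k})(x)[h]=D(\mathcal A_n^{\tilde\alpha-\tilde\delta}Du_{n,k})(x)[\mathcal A_n^{-\tilde\alpha}h]$ and then applies the one-sided bound \eqref{est2'} with $\gamma'=\tilde\alpha-\tilde\delta$, whose admissibility range is exactly the second condition in \eqref{ineq_alfa_app}. Read literally, that identity amounts to commuting $\mathcal A_n^{\tilde\alpha}$ past the Hessian $D^2u_{n,k}(x)$, which is false for a general symmetric operator; and since $\tilde\delta$ may be positive while $\tilde\alpha>0$, the two-sided norm $\|\mathcal A_n^{-\tilde\delta}D^2u_{n,k}(x)\mathcal A_n^{\tilde\alpha}\|_{\cL(\mathcal H_n)}$ cannot be recovered from the one-sided bounds by abstract operator interpolation either (the two weights sit on opposite sides with opposite signs, so a three-lines argument between one-sided endpoints never reaches that point). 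Your route --- returning to \eqref{D20}--\eqref{D21}, putting $\mathcal A_n^{-\tilde\delta}$ on the $\Gamma_{n,t}$ slot and $\mathcal A_n^{\tilde\alpha}$ on the $e^{-tA_n}$ slot, interpolating the $C^0_b$ and $C^1_b$ kernels $t^{-(1+\tilde\alpha+\tilde\delta)}$ and $t^{-(\frac12+\tilde\alpha)}$ to get $t^{-(1-\frac\theta2+\tilde\alpha+(1-\theta)\tilde\delta)}$, and integrating against $e^{-\bar c\lambda_k t}$ with the data bound $\|\langle B_n,Du_{n,k}\rangle+g_k\|_{C^\theta_b}\lesssim\lambda_k^{\tilde\beta}$ from \eqref{est_aux1} and \eqref{est:gk} --- yields $\mathfrak L\,\lambda_k^{\tilde\beta}(\bar c\lambda_k)^{-(\frac\theta2-\tilde\alpha-(1-\theta)\tilde\delta)}$, which is exactly the bound the paper's shortcut produces, so the rest of the summation over $k$ against $\lambda_k^{-2\tilde\alpha}$ and the series $s_1$ goes through identically. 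In short: your approach supplies, as an explicit lemma, the two-sided weighted Hessian estimate that the paper's commutation step tacitly presupposes; it costs one extra semigroup computation but is the argument that actually closes this step, while the paper's version is shorter only because it reuses Proposition~\ref{prop:est} through an identity that needs precisely your lemma to be justified.
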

\begin{proof}[Proof of Proposition~\ref{th:uniq_path-app}]
Let $(X,\mathcal W)$ and $(Y,\mathcal W)$ be two weak mild solutions to \eqref{SDE-app} with initial data $x$ and $y$, respectively, in the sense of Definition~\ref{def-sol-app}, with the same cylindrical Wiener process $\mathcal W$ and defined on the same probability space.
Let $(X_n)_n$ and $(Y_n)_n$ be the respective approximated solutions as constructed above and satisfying the convergence 
in Proposition~\ref{prop:Xn_alpha}.

The first part of proof is analogous to {\sc Step 1} of the proof of Theorems~\ref{th:2}--\ref{th:3}:
we only show how to deal with the terms $I_0,\ldots,I_7$
under the approximation results of Proposition~\ref{prop:Xn_alpha}.
We estimate the left-hand side of \eqref{diff}
in the space $L^2(\Omega; D({\mathcal A}^{-{\tilde \alpha}}))$
by analyzing all the terms $I_0,\ldots,I_7$ separately.
Let $T_0\in(0,T]$, whose value will be specified later.

For the term $I_0$ we have, 
\begin{align*}
I_0= e^{-t{\mathcal A}}\mathcal P_n(x-y)+e^{-t{\mathcal A}}\sum_{k=1}^n\int_0^1({\mathcal A}_n^{{\tilde \alpha}} Du_{n,k}(\mathcal P_ny+r\mathcal P_n(x-y)), {\mathcal A}_n^{-{\tilde \alpha}}
\mathcal P_n(x-y))_{{\mathcal H}}e_k\,\d r
\end{align*}
so that, by using \eqref{est1'} 
with $\gamma={\tilde \alpha}$ and the fact that 
$e^{-t{\mathcal A}}$ and $\mathcal P_n$ are contractions in $D({\mathcal A}^{-{\tilde \alpha}})$, 
we infer that 
\begin{align}
\label{E0}
\|I_0\|^2_{L^2(\Omega; D({\mathcal A}^{-{\tilde \alpha}}))}
\leq  & \left[1+
\frac{\mathfrak L_2^2}{\bar c^{1+\theta-2{\tilde \alpha}-2(1-\theta){\tilde \delta}}}
\sum_{k=1}^n
\frac1{\lambda_k^{1+\theta-2{\tilde \beta}
-2(1-\theta){\tilde \delta}}}\right]\|x-y\|_{D({\mathcal A}^{-{\tilde \alpha}})}^2
\end{align}
Analogously, for the term $I_1$ we have,
\begin{align*}
I_1(t)&=\int_0^1Du_n(X_n(t)+r(Y_n(t)-X_n(t)))[Y_n(t)-X_n(t)]\,\d r\\
&=\sum_{k=1}^n\int_0^1({\mathcal A}^{{\tilde \alpha}}Du_{n,k}(X_n(t)+r(Y_n(t)-X_n(t))), {\mathcal A}^{-{\tilde \alpha}}(
Y_n(t)-X_n(t)))_{\mathcal H}e_k\,\d r\,,
\end{align*}
so that, by using \eqref{est1'} 
with $\gamma={\tilde \alpha}$, we infer that 
\begin{align*}
\|I_1(t)\|^2_{L^2(\Omega; D({\mathcal A}^{-{\tilde \alpha}}))}
\leq \frac{\mathfrak L_2^2}{\bar c^{1+\theta-2{\tilde \alpha}-2(1-\theta){\tilde \delta}}}
\left(\sum_{k=1}^n
\frac1{\lambda_k^{1+\theta-2{\tilde \beta}
-2(1-\theta){\tilde \delta}}}\right)
\|X_n(t)-Y_n(t)\|_{L^2(\Omega; D({\mathcal A}^{-{\tilde \alpha}}))}^2\,.
\end{align*}
For the term $I_2$, by \eqref{est1'} with $\gamma={\tilde \alpha}$ and 
the H\"older inequality we have
\begin{align*}
&\|I_2(t)\|^2_{L^2(\Omega;D({\mathcal A}^{-{\tilde \alpha}}))} \notag \\
&=(\bar c +1)^2\sum_{k=1}^{n}\mathbb{E}
\left(\int_0^te^{-(t-s){\mathcal A}_n}{\mathcal A}_n
\left[u_n(X_n(s))-u_n(Y_n(s))\right]\, \d s,{\mathcal A}^{{\tilde \alpha}}e_k\right)_{D({\mathcal A}^{-{\tilde \alpha}})}^2 \notag \\
&=(\bar c +1)^2\sum_{k=1}^{n}
\mathbb{E}\left(\int_0^te^{-(t-s)\lambda_k}\lambda_k^{1-{\tilde \alpha}}\left[u_{n,k}(X_n(s))-u_{n,k}(Y_n(s))\right]\,\d s\right)_{\mathcal H}^2 \notag \\
& = (\bar c +1)^2\sum_{k=1}^{n}
\mathbb{E}\left(\int_0^te^{-(t-s)\lambda_k}\lambda_k^{1-{\tilde \alpha}}\int_0^1(Du_{n,k}(X_n(s)+r(Y_n(s)-X_n(s))),Y_n(s)-X_n(s))_{\mathcal H}\,\d r\,\d s\right)^2 \notag \\
&\leq \frac{\mathfrak{L}^2_2(\bar c +1)^2}{\bar{c}^{1+\theta-2{\tilde \alpha}-2(1-\theta){\tilde \delta}}}
\sum_{k=1}^{n}\mathbb{E}\left(\int_0^te^{-(t-s)\lambda_k}\lambda_k\frac{1}{\lambda_k^{\frac{1+\theta}{2}-{\tilde \beta}-(1-\theta){\tilde \delta}}}\norm{{\mathcal A}^{-{\tilde \alpha}}(X_n(s)-Y_n(s))}_{\mathcal H}\,\d s\right)^2 \notag \\
&\leq \frac{\mathfrak{L}^2_2(\bar c +1)^2}{\bar{c}^{1+\theta-2{\tilde \alpha}-2(1-\theta){\tilde \delta}}}\sum_{k=1}^{n}
\frac{1}{\lambda_k^{\theta-1-2{\tilde \beta}-2(1-\theta){\tilde \delta}}}\mathbb{E}
\left(\int_0^te^{-\frac{(t-s)\lambda_k}{2}}e^{-\frac{(t-s)\lambda_k}{2}}\norm{X_n(s)-Y_n(s)}_{D({\mathcal A}^{-{\tilde \alpha}})}\,\d s\right)^2 \notag \\
&\leq \frac{\mathfrak{L}^2_2(\bar c +1)^2}{\bar{c}^{1+\theta-2{\tilde \alpha}-2(1-\theta){\tilde \delta}}}\sum_{k=1}^{n}
\frac{1}{\lambda_k^{\theta-1-2{\tilde \beta}-2(1-\theta){\tilde \delta}}}
\int_0^te^{-(t-s)\lambda_k}\,\d s
\int_0^te^{-(t-s)\lambda_k}\mathbb{E}\norm{X_n(s)-Y_n(s)}^2_{D({\mathcal A}^{-{\tilde \alpha}})}\, \d s\,, \notag 
\end{align*}
so that, for every $t\in(0,T_0]$,
\begin{align*}
 &\|I_2(t)\|^2_{L^2(\Omega;D({\mathcal A}^{-{\tilde \alpha}}))}
\leq \frac{T_0^{2\varepsilon}\mathfrak{L}^2_2(\bar c +1)^2}{\bar{c}^{1+\theta-2{\tilde \alpha}-2(1-\theta){\tilde \delta}}}\sum_{k=1}^{n}
\frac{1}{\lambda_k^{\theta+1-2{\tilde \beta}-2(1-\theta){\tilde \delta}-2\varepsilon}}
\sup_{s\in[0,t]}\E\norm{X_n(s)-X(s)}_{D({\mathcal A}^{-{\tilde \alpha}})}^2.
\end{align*}
For the term $I_3$, by \eqref{est1'} 
with $\gamma={\tilde \beta}$ and assumption \ref{H4_0} we get
\begin{align*}
\nonumber
&\|I_3(t)\|^2_{L^2(\Omega; D({\mathcal A}^{-{\tilde \alpha}}))}\\
\nonumber
&=
\sum_{k=1}^n\mathbb{E}
\left(\int_0^te^{-(t-s){\mathcal A}_n}Du_n(X_n(s))\left[B_n(X(s))-B_n(X_n(s))\right]\,\d s, {\mathcal A}^{\tilde \alpha} e_k\right)_{D({\mathcal A}^{-{\tilde \alpha}})}^2\\
\nonumber
&=\sum_{k=1}^n\mathbb{E}\left(\int_0^te^{-(t-s)\lambda_k}\lambda_k^{-{\tilde \alpha}}
\left(Du_{n,k}(X_n(s)), B_n(X(s))-B_n(X_n(s))\right)_{\mathcal H}\,\d s\right)^2\\
\nonumber
&\leq \frac{C_B^2\mathfrak L_2^2}{\bar c^{1+\theta
-2{\tilde \beta}-2(1-\theta){\tilde \delta}}}\sum_{k=1}^n\frac{1}{\lambda_k^{1+\theta+2{\tilde \alpha}
-4{\tilde \beta}-2(1-\theta){\tilde \delta}}}
\left(\int_0^t e^{-(t-s)\lambda_k}\mathbb{E}\norm{X_n(s)-X(s)}_{\mathcal H}^\theta\,\d s\right)^2\\
\nonumber
& = \frac{C_B^2\mathfrak L_2^2}{\bar c^{1+\theta
-2{\tilde \beta}-2(1-\theta){\tilde \delta}}}\sum_{k=1}^n\frac{1}{\lambda_k^{1+\theta+2{\tilde \alpha}
-4{\tilde \beta}-2(1-\theta){\tilde \delta}}}
\left(\int_0^t e^{-(t-s)\lambda_k}s^{-\theta {\tilde \alpha}}s^{\theta{\tilde \alpha}}\mathbb{E}\norm{X_n(s)-X(s)}_{\mathcal H}^\theta\,\d s\right)^2\\
\nonumber
&\leq \frac{C_B^2\mathfrak L_2^2}{\bar c^{1+\theta-2{\tilde \beta}-2(1-\theta){\tilde \delta}}}\sum_{k=1}^n\frac{1}{\lambda_k^{1+\theta+2{\tilde \alpha}
-4{\tilde \beta}-2(1-\theta){\tilde \delta}}}
\sup_{s\in(0,t]}\left(s^{2\theta{\tilde \alpha}}\mathbb{E}\norm{X_n(s)-X(s)}_{\mathcal H}^{2\theta}\right)\left(\int_0^t \frac{e^{-\lambda_k(t-s)}}{s^{\tilde \alpha}}\,\d s\right)^2 \\ 
& \leq \frac{C_B^2\mathfrak L_2^2}{\bar c^{1+\theta-2{\tilde \beta}-2(1-\theta){\tilde \delta}}}\left(\sum_{k=1}^n\frac{1}{\lambda_k^{3+\theta-4{\tilde \beta}-2(1-\theta){\tilde \delta}}}\right)
\sup_{s\in(0,t]}\left(s^{2\theta{\tilde \alpha}}\mathbb{E}\norm{X_n(s)-X(s)}_{\mathcal H}^{2\theta}\right)\,.
\end{align*}
In the same way, for $I_4$ we get
\begin{align*}
\|I_4(t)\|^2_{L^2(\Omega; D({\mathcal A}^{-{\tilde \alpha}}))}\leq 
\frac{C_B^2\mathfrak L_2^2}{\bar c^{1+\theta
-2{\tilde \beta}-2(1-\theta){\tilde \delta}}}
\left(\sum_{k=1}^{n}\frac{1}{\lambda_k^{3+\theta
-4{\tilde \beta}-2(1-\theta){\tilde \delta}}}\right)
\sup_{s\in(0,t]}\left(s^{2\theta{\tilde \alpha}}\mathbb{E}\norm{X_n(s)-X(s)}_{\mathcal H}^{2\theta}\right)\,.
\end{align*}
For $I_5$, we have
\begin{align*}
I_5(t)
=\int_0^t{\mathcal A}_n^{\tilde \beta} e^{-(t-s){\mathcal A}_n}
{\mathcal A}_n^{-{\tilde \beta}}[B_n(X(s))-B_n(X_n(s))]\,\d s\,,
\end{align*}
Since ${\tilde \beta}-{\tilde \alpha}\in[0,1)$, we can fix 
$\eta\in(({\tilde \beta}-{\tilde \alpha}-\frac12)\vee0, \frac12)$:
by \eqref{semiS} with $\gamma={\tilde \beta}$, \eqref{est:Bn2} and the H\"older inequality, we get
\begin{align*}
&\mathbb{E}\norm{I_5(t)}_{D({\mathcal A}^{-{\tilde \alpha}})}^2
\leq \mathbb{E}\left[\left(\int_0^t\norm{{\mathcal A}_n^{{\tilde \beta}-{\tilde \alpha}} e^{-(t-s){\mathcal A}_n}
{\mathcal A}_n^{-{\tilde \beta}}[B_n(X(s))-B_n(X_n(s))]}_{\mathcal H}\,\d s\right)^2\right]\\
\nonumber
&\qquad\leq \mathbb{E}\left[M_{{\tilde \beta}-{\tilde \alpha}}^2C_B^2\left(\int_0^t\frac{1}{(t-s)^{{\tilde \beta}-{\tilde \alpha}-\eta}}
\frac{1}{(t-s)^{\eta}}\norm{X(s)-X_n(s)}^\theta_{\mathcal H}\,\d s\right)^2\right]\\
\nonumber
&\qquad\leq \mathbb{E}\left[M_{{\tilde \beta}-{\tilde \alpha}}^2C_B^2\left(\int_0^t\frac{1}{(t-s)^{2({\tilde \beta}-\eta-{\tilde \alpha})}}\,\d s\right)\left(\int_0^t\frac{s^{2\theta{\tilde \alpha}}}{(t-s)^{2\eta}s^{2\theta{\tilde \alpha}}}\norm{X(s)-X_n(s)}^{2\theta}_{\mathcal H}\,\d s\right)\right]\\
&\qquad\leq \frac{M_{{\tilde \beta}-{\tilde \alpha}}^2C_B^2 
t^{1-2({\tilde \beta}-\eta-{\tilde \alpha})}}{1-2({\tilde \beta}-\eta-{\tilde \alpha})}
\sup_{s\in(0,t]}\left(s^{2\theta{\tilde \alpha}}\mathbb{E}
\norm{X(s)-X_n(s)}^{2\theta}_{\mathcal H}\right)
\int_0^t\frac{1}{(t-s)^{2\eta}s^{2\theta{\tilde \alpha}}}\,\d s \nonumber \\
&\qquad = \frac{M_{{\tilde \beta}-{\tilde \alpha}}^2C_B^2 K_{\eta,\theta,{\tilde \alpha}}
t^{2-2{\tilde \beta}+2(1-\theta){\tilde \alpha}}}{1-2({\tilde \beta}-\eta-{\tilde \alpha})}
\sup_{s\in(0,t]}\left(s^{2\theta{\tilde \alpha}}\mathbb{E}
\norm{X(s)-X_n(s)}^{2\theta}_{\mathcal H}\right)\,,
\end{align*}
where the integral
\[
  K_{\eta,\theta,{\tilde \alpha}}:=\int_0^1\frac{1}{(1-s)^{2\eta}s^{2\theta{\tilde \alpha}}}\,\d s<+\infty
\]
is finite since
$2\eta\in(0,1)$ and $2\theta{\tilde \alpha}\in(0,1)$.
Analogously, we obtain for $I_6$ that 
\begin{align*}
\mathbb{E}\norm{I_6(t)}_{\mathcal H}^2\leq 
\frac{M_{{\tilde \beta}-{\tilde \alpha}}^2C_B^2K_{\eta,\theta,{\tilde \alpha}} 
t^{2-2{\tilde \beta}+2(1-\theta){\tilde \alpha}}}{1-2({\tilde \beta}-\eta-{\tilde \alpha})}
\sup_{s\in(0,t]}\left(s^{2\theta{\tilde \alpha}}\mathbb{E}
\norm{Y(s)-Y_n(s)}^{2\theta}_{\mathcal H}\right)\,.
\end{align*}
Eventually, for $I_7$, by the It\^o isometry we get
\begin{align*}
&\|I_7(t)\|^2_{L^2(\Omega; D({\mathcal A}^{-{\tilde \alpha}}))}=
\E\sum_{k=1}^n\left(\int_0^te^{-(t-s){\mathcal A}_n}\left(Du_n(X_n(s))-Du_n(Y_n(s)\right)[{\mathcal A}_n^{-{\tilde \delta}}\,\d \mathcal W(s)], {\mathcal A}^{{\tilde \alpha}}\tilde e_k\right)^2_{D({\mathcal A}^{-{\tilde \alpha}})}\\
&=\sum_{k=1}^n\lambda_k^{-2{\tilde \alpha}}\mathbb{E}\left(\int_0^te^{-(t-s)\lambda_k}\left(Du_{n,k}(X_n(s))-Du_{n,k}(Y_n(s)), {\mathcal A}_n^{-{\tilde \delta}}\,\d \mathcal W(s)\right)_{\mathcal H}\right)^2\\
&=\sum_{k=1}^n\lambda_k^{-2{\tilde \alpha}}\sum_{j=1}^{n}\int_0^te^{-2(t-s)\lambda_k}\mathbb{E}
\left(Du_{n,k}(X_n(s))-Du_{n,k}(Y_n(s)),{\mathcal A}_n^{-{\tilde \delta}}\tilde e_j\right)_{\mathcal H}^2\,\d s\\
&=\sum_{k=1}^n\lambda_k^{-2{\tilde \alpha}}\int_0^te^{-2(t-s)\lambda_k}\mathbb{E}
\norm{{\mathcal A}_n^{-{\tilde \delta}}(Du_{n,k}(X_n(s))-Du_{n,k}(Y_n(s)))}_{\mathcal H}^2\,\d s\\
&=\sum_{k=1}^n\lambda_k^{-2{\tilde \alpha}}\int_0^te^{-2(t-s)\lambda_k}\mathbb{E}
\norm{
\int_0^1
D({\mathcal A}_n^{-{\tilde \delta}}Du_{n,k})(X_n(s)+r(Y_n(s)-X_n(s)))[Y_n(s)-X_n(s)]\,\d r
}_{\mathcal H}^2\,\d s.
\end{align*}
Recalling now that 
\begin{align*}
 D({\mathcal A}_n^{-{\tilde \delta}} D(u_{n,k}\varphi))(x)[h]
 &=\left(D({\mathcal A}_n^{-{\tilde \delta}} D u_{n,k})(x), h\right)_{{\mathcal H}_n}\\
 &=\left({\mathcal A}_n^{\tilde \alpha} D({\mathcal A}^{-{\tilde \delta}}Du_{n,k})(x), {\mathcal A}_n^{-{\tilde \alpha}}h\right)_{{\mathcal H}_n}\\
 &=\left(D({\mathcal A}^{{\tilde \alpha}-{\tilde \delta}}Du_{n,k})(x), {\mathcal A}_n^{-{\tilde \alpha}}h\right)_{{\mathcal H}_n}\\
 &=D({\mathcal A}_n^{{\tilde \alpha}-{\tilde \delta}} D(u_{n,k}\varphi))(x)[{\mathcal A}_n^{-{\tilde \alpha}} h]
   \quad\forall\,h\in {\mathcal H}_n\,,
\end{align*}
by the estimate \eqref{est2'} with $\gamma'={\tilde \alpha}-{\tilde \delta}$
(which we can apply thanks to \eqref{ineq_alfa_app}),
we obtain
\begin{align}
\label{E7}\notag
&\|I_7(t)\|^2_{L^2(\Omega; D({\mathcal A}^{-{\tilde \alpha}}))}\\
\notag
&\leq
\sum_{k=1}^n\lambda_k^{-2{\tilde \alpha}}\int_0^te^{-2(t-s)\lambda_k}\\
\notag
&\qquad\times
\mathbb{E}
\norm{
\int_0^1
D({\mathcal A}_n^{{\tilde \alpha}-{\tilde \delta}}Du_{n,k})(X_n(s)+r(Y_n(s)-X_n(s)))
[{\mathcal A}_n^{-{\tilde \alpha}}(Y_n(s)-X_n(s))]\,\d r
}_{\mathcal H}^2\,\d s\\
\notag
&\leq\frac{\mathfrak L_3^2}
{\bar c^{\theta-2(1-\theta){\tilde \delta}-2{\tilde \alpha}}}\sum_{k=1}^n\frac{1}{\lambda_k^{\theta-2{\tilde \beta}-2(1-\theta){\tilde \delta}}}\int_0^te^{-2(t-s)\lambda_k}\mathbb{E}\norm{X_n(s)-Y_n(s)}_{D({\mathcal A}^{-{\tilde \alpha}})}^2
\,\d s\\
&\leq\frac{\mathfrak L_3^2}
{2\bar c^{\theta-2(1-\theta){\tilde \delta}-2{\tilde \alpha}}}\sum_{k=1}^n\frac{1}{\lambda_k^{\theta+1-2{\tilde \beta}-2(1-\theta){\tilde \delta}}}
\sup_{s\in[0,t]}
\mathbb{E}\norm{X_n(s)-Y_n(s)}_{D({\mathcal A}^{-{\tilde \alpha}})}^2\,.
\end{align}
Recalling that, thanks to
assumption \eqref{trace_app},
\[
  \sum_{k=1}^n
    \frac1{\lambda_k^{1+\theta
      -2{\tilde \beta}-2(1-\theta){\tilde \delta}-2\varepsilon}}\leq
  \sum_{k=1}^\infty
    \frac1{\lambda_k^{1+\theta
      -2{\tilde \beta}-2(1-\theta){\tilde \delta}-2\varepsilon}} =: s_1 <+\infty\,,
\]
and 
\[
s_2:= \sum_{k=1}^\infty\frac{1}{\lambda_k^{3+\theta
-4{\tilde \beta}-2(1-\theta){\tilde \delta}}}\leq s_1<+\infty\,,
\]
from \eqref{E0}-\eqref{E7}, it follows that
\begin{align*}
&\sup_{t\in[0,T_0]}\|X_n(t)-Y_n(t)\|_{L^2(\Omega;D({\mathcal A}^{-{\tilde \alpha}}))}^2
\leq  \left[1+
\frac{\mathfrak L_2^2 s_1}{\bar c^{1+\theta-2{\tilde \alpha}-2(1-\theta){\tilde \delta}}}
\right]\|x-y\|_{D({\mathcal A}^{-{\tilde \alpha}})}^2 \\
&\qquad + 
\left(\frac{\mathfrak L_2^2 s_1}{\bar c^{1+\theta-2{\tilde \alpha}-2(1-\theta){\tilde \delta}}}
+\frac{T_0^{2\varepsilon}\mathfrak{L}^2_2(\bar c +1)^2 s_1}{\bar{c}^{1+\theta-2{\tilde \alpha}-2(1-\theta){\tilde \delta}}}
+\frac{\mathfrak L_3^2 s_1}
{2\bar c^{\theta-2(1-\theta){\tilde \delta}-2{\tilde \alpha}}}
\right)
\sup_{t\in[0,T_0]}\E\|X_n(t)-Y_n(t)\|_{D({\mathcal A}^{-{\tilde \alpha}})}^2 \\
&\qquad + 
\left[\frac{C_B^2\mathfrak L_2^2 s_2}{\bar c^{1+\theta-2{\tilde \beta}-2(1-\theta){\tilde \delta}}}
+\frac{M_{{\tilde \beta}-{\tilde \alpha}}^2C_B^2 K_{\eta,\theta,{\tilde \alpha}}
T^{2-2{\tilde \beta}+2(1-\theta){\tilde \alpha}}}{1-2({\tilde \beta}-\eta-{\tilde \alpha})}
\right]
\sup_{t\in(0,T_0]}\left(t^{2\theta{\tilde \alpha}}\mathbb{E}\norm{X_n(t)-X(t)}_{\mathcal H}^{2\theta}\right) \\
&\qquad + 
\left[\frac{C_B^2\mathfrak L_2^2 s_2}{\bar c^{1+\theta-2{\tilde \beta}-2(1-\theta){\tilde \delta}}}
+\frac{M_{{\tilde \beta}-{\tilde \alpha}}^2C_B^2 K_{\eta,\theta,{\tilde \alpha}}
T^{2-2{\tilde \beta}+2(1-\theta){\tilde \alpha}}}{1-2({\tilde \beta}-\eta-{\tilde \alpha})}
\right]
\sup_{t\in(0,T_0]}\left(t^{2\theta{\tilde \alpha}}\mathbb{E}\norm{Y_n(t)-Y(t)}_{\mathcal H}^{2\theta}\right).
\end{align*}
Now, since the exponents of the constant $\bar c$ are all positive
thanks to assumption \eqref{ineq_alfa_app},
we can fix $\bar c$ satisfying \eqref{c}--\eqref{c'} large enough
and $T_0\in(0,T]$ small enough so that 
\[
  \frac{\mathfrak L_2^2 s_1}{\bar c^{1+\theta-2{\tilde \alpha}-2(1-\theta){\tilde \delta}}}
+\frac{T_0^{2\varepsilon}\mathfrak{L}^2_2(\bar c +1)^2 s_1}{\bar{c}^{1+\theta-2{\tilde \alpha}-2(1-\theta){\tilde \delta}}}
+\frac{\mathfrak L_3^2 s_1}
{2\bar c^{\theta-2(1-\theta){\tilde \delta}-2{\tilde \alpha}}}<\frac12\,.
\]
Hence, by letting $n\to\infty$ and by exploiting Proposition~\ref{prop:Xn_alpha}
we get 
\begin{align*}
    \|X-Y\|^2_{C^0([0,T_0]; L^2(\Omega; D(A^{-{\tilde \alpha}})))}
    &\leq 
    2\left[1+
\frac{\mathfrak L_2^2 s_1}{\bar c^{1+\theta-2{\tilde \alpha}-2(1-\theta){\tilde \delta}}}
\right]\|x-y\|_{D(A^{-{\tilde \alpha}})}^2
\end{align*}
and the continuous dependence \eqref{cont_dep-app} is proved
via a classical patching argument.
This concludes the proof.
\end{proof}

\section*{Data availability statement}
No new data were created or analysed in this study. Data sharing is not applicable to this article.

\section*{Conflict of interest statement}
The authors have no conflicts of interest to declare.

\section*{Acknowledgments}
The authors are members of Gruppo Nazionale per l'Analisi Matematica, la Probabilit\`a e le loro Applicazioni (GNAMPA), Istituto Nazionale di Alta Matematica (INdAM). The research of L.S.~has been supported by MUR, 
grant Dipartimento di Eccellenza 2023-2027.

\bibliographystyle{siam}
\bibliography{biblio.bib}

\end{document}